\newcommand*{\rom}[1]{\expandafter\@slowromancap\romannumeral #1@}
\def\Stas{\textcolor{magenta}}
\newcommand{\dotp}[2]{\left\langle #1, #2\right\rangle}
\newcommand{\argmin}{\mathop{\rm argmin~}}
\newcommand{\vol}{\mathop{\rm Vol}{}_{\mathcal M}}
\newcommand{\proj}{\mathop{{\rm Proj}}}
\newcommand{\var}{\mathop{{\rm Var}}}
\newcommand{\diag}{\mathop{\rm diag}}
\newcommand{\card}{\mathop{\rm card}}
\newcommand{\Id}{\mathop{\rm Id}}
\def\Vol{{\rm Vol}}
\def\eps{\varepsilon}
\def\m{\mathcal}
\def\mb{\mathbb}
\def\card{{\rm Card}}
\def\supp{{\rm supp}}
\def\tr{{\rm tr\,}}
\def\l{\left}
\def\r{\right}
\begin{document}

\title{Multiscale Dictionary Learning: Non-Asymptotic Bounds and Robustness}

\author{\name Mauro Maggioni \email mauro@math.duke.edu \\
       \addr Departments of Mathematics, Electrical and Computer Engineering, and Computer Science\\
       Duke University\\
      Durham, NC 27708, USA
       \AND
       \name Stanislav Minsker \email minsker@usc.edu \\
       \addr Department of Mathematics\\
       University of Southern California\\
       Los Angeles, CA 90089, USA
       \AND
       \name Nate Strawn \email nate.strawn@georgetown.edu \\
       \addr Department of Mathematics and Statistics\\
       Georgetown University\\
     Washington D.C., 20057, USA
     }

\editor{}

\maketitle




\begin{abstract}
\hspace*{5pt}High-dimensional datasets are well-approximated by low-dimensional structures. Over the past decade, this empirical observation motivated the investigation of detection, measurement, and modeling techniques to exploit these low-dimensional intrinsic structures, yielding numerous implications for high-dimensional statistics, machine learning, and signal processing. Manifold learning (where the low-dimensional structure is a manifold) and dictionary learning (where the low-dimensional structure is the set of sparse linear combinations of vectors from a finite dictionary) are two prominent theoretical and computational frameworks in this area. Despite their ostensible distinction, the recently-introduced Geometric Multi-Resolution Analysis (GMRA) provides a robust, computationally efficient, multiscale procedure for simultaneously learning manifolds and dictionaries. \\
\hspace*{10pt}In this work, we prove non-asymptotic probabilistic bounds on the approximation error of GMRA for a rich class of data-generating statistical models that includes ``noisy'' manifolds, thereby establishing the theoretical robustness of the procedure and confirming empirical observations. In particular, if a dataset aggregates near a low-dimensional manifold, our results show that the approximation error of the GMRA is completely independent of the ambient dimension. Our work therefore establishes GMRA as a provably fast algorithm for dictionary learning with approximation and sparsity guarantees. We include several numerical experiments confirming these theoretical results, and our theoretical framework provides new tools for assessing the behavior of manifold learning and dictionary learning procedures on a large class of interesting models.
\end{abstract}

\begin{keywords}
 Dictionary learning, Multi-Resolution Analysis, Manifold Learning, Robustness, Sparsity
\end{keywords}


\tableofcontents

\section{Introduction}

In many high-dimensional data analysis problems, existence of {\em{efficient data representations}} can dramatically boost the statistical performance and the computational efficiency of learning algorithms.
Inversely, in the absence of efficient representations, the curse of dimensionality implies that required sample sizes must grow exponentially with the ambient dimension, which ostensibly renders many statistical learning tasks completely untenable. Parametric statistical modeling seeks to resolve this difficulty by restricting the family of candidate distributions for the data to a collection of probability measures indexed by a finite-dimensional parameter. By contrast, nonparametric statistical models are more flexible and oftentimes more precise, but usually require data samples of large sizes unless the data exhibits some simple latent structure (e.g., some form of sparsity). Such structural considerations are essential for establishing convergence rates, and oftentimes these structural considerations are geometric in nature.

One classical geometric assumption asserts that the data, modeled as a set of points in $\mathbb{R}^D$, in fact lies on (or perhaps very close to) {\em{a single $d$-dimensional affine subspace}} $V\in\mathbb{R}^D$ where $d\ll D$. 
Tools such as PCA \citep[see][]{Pearson_PCA,Hotelling_PCA1,Hotelling_PCA2} estimate $V$ in a stable fashion under suitable assumptions.
Generalizing this model, one may assert that the data lies on a union of several low-dimensional affine subspaces instead of just one, and in this case the estimation of the {\em multiple affine subspaces} from data samples already inspired intensive research due to its subtle complexity 
\citep[e.g., see][]{MSL-SK-IEICE04,GPCA-VMS-PAMI05,LSA-YP-ECCV06,CM:CVPR2011,
GPCA-MYDF-SiamRev08,spectral_theory,SSC-EV-CVPR09,LBF-ZSWL-CVPR10,LRR-LLY-ICML10,ALC-MDHW-PAMI07,RANSAC-FB-ACM81,EM-TB-NeuComp99,KS-Ho-CVPR03}.
A widely used form of this model is that of $k$-sparse data, where there exists a dictionary (i.e., a collection of vectors) $\Phi=\{\varphi_i\}_{i=1}^m\subset\mathbb{R}^D$ such that each observed data point $x\in\mathbb{R}^d$ may be expressed as a linear combination of at most $k\ll D$ elements of $\Phi$. 
These \textit{sparse representations} offer great convenience and expressivity for signal processing tasks \citep[such as in][]{doi:10.1117/12.731851,peyre:sparsetexture}, compressive sensing, statistical estimation, and learning \citep[e.g., see][among others]{Lewicki98learningovercomplete,Kreutz-Delgado:2003:DLA:643335.643340,DBLP:journals/tit/MaurerP10,chen:33,DD:CompressedSensing,Aharon05ksvd,Tao:DantzigEstimator}, and even exhibits connections with representations in the visual cortex \citep[see][]{OF:SparseCodingV1}. 
In geometric terminology, such sparse representations are generally attainable when the local \textit{intrinsic dimension} of the observations is small. 
For these applications, the dictionary is usually assumed to be known a priori, instead of being learned from the data, but it has been recognized in the past decade that data-dependent dictionaries may perform significantly better than generic dictionaries even in classical signal processing tasks. 

The $k$-sparse data model motivates a large amount of research in dictionary learning, where $\Phi$ is learned from data rather than being fixed in advance: given $n$ samples $X_1,\dots,X_n$ from a probability distribution $\mu$ in $\mathbb{R}^D$ representing the training data, an algorithm ``learns'' a dictionary $\widehat{\Phi}$ which provides sparse representations for the observations sampled from $\mu$. 
This problem and its optimal algorithmic solutions are far from being well-understood, at least compared to the understanding that we have for classical dictionaries such as Fourier, wavelets, curvelets, and shearlets. These dictionaries arise in computational harmonic analysis approaches to image processing, and \cite{Donoho99wedgelets:nearly-minimax} (for example) provides rigorous, optimal approximation results for simple classes of images.
The work of \citet{gribonval:hal-00918142} present general bounds for the complexity of learning the dictionaries 
\citep[see also][and references therein]{Vainsencher:2011:SCD:1953048.2078210,DBLP:journals/tit/MaurerP10}. 
The algorithms used in dictionary learning are often computationally demanding, and many of them are based on high-dimensional non-convex optimization \citep{Mairal:OnlineLearningSparseCoding}.
The emphasis of existing work is often made on the generality of the approach, where minimal assumptions are made on geometry of the distribution from which the sample is generated. These ``pessimistic'' techniques incur bounds dependent upon the ambient dimension $D$ in general (even in the standard case of data lying on one hyperplane). 

A different type of geometric assumption on the data gives rise to manifold learning, where the observations aggregate on a {\em suitably regular manifold $\mathcal{M}$} of dimension $d$ isometrically embedded in $\mathbb{R}^D$ 
\citep[notable works include][among others]{isomap,RSLLE,BN,DoGri:WhenDoesIsoMap,DG_HessianEigenmaps,ZhaZha,DiffusionPNAS,DiffusionPNAS2,jms:UniformizationEigenfunctions,jms:UniformizationEigenfunctions2,CMDiffusionWavelets,Wasserman:ManifoldEstimationHausdorff,MM:MultiscaleDimensionalityEstimationAAAI,LMR:MGM1,TestingManifoldHyp}. This setting has been recognized as useful in a variety of applications \citep[e.g.][]{CLMKSWZ:GeometrySensorOutputs,MM:EEG,Donoho:MultiscaleManifoldValued}, influencing work in the applied mathematics and machine learning communities during the past several years. 
It has also been recognized that in many cases the data does not naturally aggregate on a smooth manifold \citep[as in][]{Wakin:MultiscaleStructureNonDifferentiableImageManifolds,MM:MultiscaleDimensionalityEstimationAAAI,LMR:MGM1}, with examples arising in imaging that contradict the smoothness conditions.
While this phenomenon is not as widely recognized as it probably could be, we believe that it is crucial to develop methods (both for dictionary and manifold learning) that are robust not only to noise, but also to modeling error. 
Such concerns motivated the work on intrinsic dimension estimation of noisy data sets \citep[see][]{MM:MultiscaleDimensionalityEstimationAAAI,LMR:MGM1}, where smoothness of the underlying distribution of the data is not assumed, but only certain natural conditions (possibly varying with the scale of the data) are imposed. 
The central idea of the aforementioned works is to perform the multiscale singular value decomposition (SVD) of the data, an approach inspired by the works of \citet{DS} and \citet{Jones-TSP} in classical geometric measure theory. 
These techniques were further extended in several directions in the papers by \citet{MM:WaveletsMultiscale2010,FFT2011,CM:CVPR2011}, while \citet{CM:geometricwaveletsciss,CM:MGM2} built upon this work to construct multiscale dictionaries for the data based on the idea of Geometric Multi-Resolution Analysis (GMRA). 

Until these recent works introduced the GMRA construction, connections between dictionary learning and manifold learning had not garnered much attention in the literature. These papers showed that, for intrinsically low-dimensional data, one may perform dictionary learning very efficiently by exploiting the underlying geometry, thereby illuminating the relationship between manifold learning and dictionary learning.
In these papers, it was demonstrated that, in the infinite sample limit and under a manifold model assumption for the distribution of the data (with mild regularity conditions for the manifold), the GMRA algorithm efficiently learns a dictionary in which the data admits sparse representations. 
More interestingly, the examples in that paper show that the GMRA construction succeeds on real-world data sets which do not admit a structure consistent with the smooth manifold modeling assumption, suggesting that the GMRA construction exhibits robustness to modeling error. 
This desirable behavior follows naturally from design decisions; GMRA combines two elements that add stability: a multiscale decomposition and localized SVD.
Similar ideas appeared in work applying dictionary learning to computer vision problems, for example in the paper by \cite{YuLCC}, where local linear approximations are used to create dictionaries. These techniques appeared at roughly the same time as GMRA (\cite{CM:geometricwaveletsciss}), but were not multiscale in nature, and the selection of the local scale is crucial in applications. These techniques also lacked any finite or infinite sample guarantees, nor considered the effect of noise. They were however successfully applied in computer vision problems, most notably in the Pascal 2007 challenge.

In this paper, we analyze the finite sample behavior of (a slightly modified version of) that construction, and prove strong finite-sample guarantees for its behavior under general conditions on the geometry of a probability distribution generating the data. In particular, we show that these conditions are satisfied when the probability distribution is concentrated ``near'' a manifold, which robustly accounts for noise and modeling errors. In contrast to the pessimistic bounds mentioned above, the bounds that we prove only depend on the ``intrinsic dimension'' of the data. It should be noted that our method of proof produces non-asymptotic bounds, and requires several explicit geometric arguments not previously available in the literature (at least to the best of our knowledge). Some of our geometric bounds could be of independent interest to the manifold learning community. 

The GMRA construction is therefore proven to simultaneously ``learn'' manifolds (in sense that it outputs a suitably close approximation to points on a manifold) and dictionaries in which data are represented sparsely. Moreover, the construction is guaranteed to be robust with respect to noise and to the ``perturbations'' of the manifold model. 
The GMRA construction is fast, linear in the size of the data matrix, inherently online, does not require nonlinear optimization, and is not iterative. 
Finally, our results may be combined with recent GMRA compressed sensing techniques and algorithms presented in \citet{IM:GMRA_CS}, yielding both a method to learn a dictionary in a stable way on a finite set of training data, and a way of performing compressive sensing and reconstruction (with guarantees) from a small number of (suitable) linear projections (again without the need for expensive convex optimization).

This paper is organized as follows: Section \ref{sec:def} introduces the main definitions and notation employed throughout the paper. 
Section \ref{sec:contr} explains the main contributions, formally states the results and provides comparison with existing literature. 
Finally, Sections \ref{sec:prelim} and \ref{sec:proofs} are devoted to the proofs of our main results, Theorem \ref{thm:FSB} and Theorem \ref{thm:MFD}.

\section{Geometric Multi-Resolution Analysis (GMRA)}
\label{sec:def}

This section describes the main results of the paper, starting in a somewhat informal form. The statements will be made precise in the course of the exposition. 
In the statements below, ``$\gtrsim$'' and ``$\lesssim$'' denote inequalities up to multiplicative constants and logarithmic factors. 
\vskip 0.1in
{\noindent \textbf{Statement of results.}} \emph{
Let $\sigma\geq 0$ be a fixed small constant, and let $\eps\gtrsim \sigma$ be given. 
Suppose that $n\gtrsim \eps^{-(1+d/2)}$, and let $\m X_n=\{X_1,\ldots, X_n\}$ be an i.i.d. sample from $\Pi$, a probability distribution with density supported in a tube of radius $\sigma$ around a smooth closed $d$-dimensional manifold $\mathcal{M}\hookrightarrow\mathbb{R}^D$, with $d>1$. There exists an algorithm that, given $\m X_n$, outputs the following objects: 
\begin{itemize}
\item[\tiny$\bullet$]a dictionary 
$\widehat{\Phi}_\eps=\{\widehat{\varphi}_i\}_{i\in \m J_\eps}\subset\mathbb{R}^D$;
\item[\tiny$\bullet$]  a nonlinear ``encoding" operator 
$\widehat{\mathcal{D}}_\eps:\mathbb{R}^D\rightarrow\mathbb{R}^{\m J_\eps}$ which takes $x\in \mb R^D$ and returns the coefficients of its approximation by the elements of $\widehat{\Phi}_\eps$;
\item[\tiny$\bullet$]  a ``decoding" operator $\widehat{\mathcal{D}}_\eps^{-1}:\mathbb{R}^{\m J_\eps}\rightarrow\mathbb{R}^D$ which maps a sequence of coefficients to an element of $\mb R^D$. 
\end{itemize}
Moreover, the following properties hold with high probability:
\begin{enumerate}
\item[i.] $\card(\m J_\eps)\lesssim\eps^{-d/2}$;
\item[ii.] the image of $\widehat{\m D}_\eps$ is contained in the set $S_{d+1}\subset \mb R^{\m J_\eps}$ of all $(d+1)$ - sparse vectors (i.e., vectors with at most $d+1$ nonzero coordinates);
\item[iii.]  the reconstruction error satisfies
\begin{equation*}
\begin{aligned}
\sup_{x\in{\rm support}(\Pi)}\|x-\widehat{\mathcal{D}}_\eps^{-1}\widehat{\mathcal{D}}_\eps(x)\|&\lesssim \eps;
\end{aligned}
\end{equation*}
\item[iv.] the time complexity for computing
\begin{itemize}
\item[$\bullet$] $\widehat{\Phi}_\eps$ is $O(C^d (D+d^2)\eps^{-(1+\frac d2)}\log(1/\eps))$, where $C$ is a universal constant;
\item[$\bullet$] $\widehat{\mathcal{D}}_\eps(x)$ is $O(d(D+d\log(1/\eps)))$, and for $\widehat{\mathcal{D}}_\eps^{-1}(x)$ is $O(d(D+\log(1/\eps)))$.
\end{itemize}
If a new observation $X_{n+1}$ from $\Pi$ becomes available, $\widehat{\Phi}_\eps$ may be updated in time $O(C^d (D+d^2)\log(1/\eps))$.
\end{enumerate}
}

In other words, we can construct a data-dependent dictionary $\widehat{\Phi}_\eps$ of cardinality $O(\eps^{-d/2})$ by looking at $O(\eps^{-1-\frac{d}{2}})$ data points drawn from $\Pi$, such that $\widehat{\Phi}_\eps$ provides both $(d+1)$-sparse approximations to data and has expected ``reconstruction error'' of order $\eps$ (with high probability). 
Note that the cost of encoding the $(d+1)$ non-zero coefficients requires $O((d+1)\log(\card(\m J_\eps)))=O(d^2\log(1/\epsilon))$.
Moreover, the algorithm producing this dictionary is fast and can be quickly updated if new points become available. 
We want to emphasize that the complexity of our construction only depends on the desired accuracy $\eps$, and is independent of the total number of samples (for example, it is enough to use only the first $\simeq \eps^{-(1+d/2)}$ data points). Many existing techniques in dictionary learning cannot guarantee a requested accuracy, or a given sparsity, and a certain computational cost as a function of the two. Our results above completely characterize the tradeoffs between desired precision, dictionary size, sparsity, and computational complexity for our dictionary learning procedure.

We also remark that a suitable version of compressed sensing applies to the dictionary representations used in the theorem: we refer the reader to the work by \citet{IM:GMRA_CS}, and its applications to hyperspectral imaging by \citet{6410789}.

\subsection{Notation} 

For $v\in\mb R^D$, $\|v\|$ denotes the standard Euclidean norm in $\mb R^D$.
$B_d(0,r)$ is the Euclidean ball in $\mb R^d$ of radius $r$ centered at the origin, and we let $B(0,r):=B_D(0,r)$. 
$\proj_V$ stands for the orthogonal projection onto a linear subspace $V\subseteq \mb R^D$, $\dim(V)$ for its dimension and $V^\perp$ for its orthogonal complement.
For $x\in \mb R^D$, let $\proj_{x+V}$ be the affine projection onto the affine subspace $x+V$ defined by
$\proj_{x+V}(y)=x+\proj_V(y-x)$, for $y\in\mb R^D$.

Given a matrix $A\in \mb R^{k\times l}$, we write $A=[a_1|\cdots |a_l]$, where $a_i$ stands for the $i$th column of $A$. 
The operator norm is denoted by $\| A \|$, the Frobenius norm by $\|A\|_F$ and the matrix transpose by $A^T$. 
If $k=l$, $\tr (A)$ denotes the trace.  
For $v\in \mb R^k$, let $\diag(v)$ be the $k\times k$ diagonal matrix with $(\diag(v))_{ii}=v_i, \ i=1,\ldots,k$. 
Finally, we use ${\rm span}\{a_i\}_{i=1}^l$ to denote the linear span of the columns of $A$. 

Given a $C^2$ function $f:\mathbb{R}^l\rightarrow\mathbb{R}^k$, let $f_i$ denote the $i$th coordinate of the function $f$ for $i=1,\ldots k$, 
$Df(v)$ the Jacobian of $f$ at $v\in\mathbb{R}^l$, and $D^2 f_i(v)$ the Hessian of the $i$th coordinate at $v$. 

We shall use $d\Vol$ to denote Lebesgue measure on $\mathbb{R}^D$, and if $U\subset\mathbb{R}^D$ is Lebesgue measurable, $\Vol(U)$ stands for the Lebesgue measure of $U$. 
We will use $\vol$ to denote the volume measure on a $d$-dimensional manifold $\m M$ in $\mb R^D$ (note that this coincides with the $d$-dimensional Hausdorff measure for the subset $\m M$ of $\mb R^D$), $U_\m M$ - the uniform distribution over $\m M$, and $d_{\m M}(x,y)$ to denote the geodesic distance between two points $x,y\in\m M$. 
For a probability measure $\Pi$ on $\mb R^D$,
$
\supp(\Pi):=\cap_{C \text{ closed},\Pi(C)=1}C
$
stands for its support. Finally, for $x,y\in\mb R$, $x\vee y:=\max(x,y)$.

\subsection{Definition of the geometric multi-resolution analysis (GMRA)}
We assume that the data are identically, independently distributed samples from a Borel probability measure $\Pi$ on $\mb R^D$. 
Let $1\leq d\leq D$ be an integer. 
A GMRA with respect to the probability measure $\Pi$ consists of a collection of (nonlinear) operators $\{P_j: \mb R^D\to\mb R^D\}_{j\ge 0}$. 
For each ``resolution level'' $j\geq 0$, $P_j$ is uniquely defined by a collection of pairs of subsets and affine projections, $\{(C_{j,k},P_{j,k})\}_{k=1}^{N(j)}$, where the subsets $\{C_{j,k}\}_{k=1}^{N(j)}$ form a measurable partition of $\mb R^D$ (that is, members of $\{C_{j,k}\}_{k=1}^{N(j)}$ are pairwise disjoint and the union of all members is $\mb R^D$). 
$P_j$ is constructed by piecing together local affine projections. Namely, let
$$
P_{j,k}(x) := c_{j,k}+\proj{}_{V_{j,k}} (x-c_{j,k}),
$$
where $c_{j,k}\in \mb R^D$ and $V_{j,k}$ are defined as follows. 
Let $\mb E_{j,k}$ stand for the expectation with respect to the conditional distribution $d\Pi_{j,k}(x)=d\Pi(x|x\in C_{j,k})$. 
Then
\begin{align}
\label{eq:a10}
c_{j,k}&=\mb E_{j,k}x,\\
\label{eq:a20}
V_{j,k}&=\argmin_{\dim(V)=d} \mb E_{j,k}\left\|x-c_{j,k}-\proj {}_V(x-c_{j,k})\right\|^2,
\end{align}
where the minimum is taken over all linear spaces $V$ of dimension $d$. 
In other words, $c_{j,k}$ is the conditional mean and $V_{j,k}$ is the subspace spanned by eigenvectors corresponding to $d$ largest eigenvalues of the conditional covariance matrix
\begin{align}
\label{eq:a25}
\Sigma_{j,k}=\mb E_{j,k}[(x-c_{j,k})(x-c_{j,k})^T]\,. 
\end{align}
Note that we have implicitly assumed that such a subspace $V_{j,k}$ is unique, which will always be the case throughout this paper. Given such a $\{(C_{j,k},P_{j,k})\}_{k=1}^{N(j)}$, we define
$$
P_j(x):=\sum_{k=1}^{N(j)} I\{x\in C_{j,k}\} P_{j,k}(x)
$$
where $I\{x\in C_{j,k}\}$ is the indicator function of the set $C_{j,k}$. 

It was shown in the paper by \cite{CM:MGM2} that if $\Pi$ is supported on a smooth, closed $d$-dimensional submanifold $\m M\hookrightarrow \mb R^D$, and if the partitions $\{C_{j,k}\}_{j=1}^{N(j)}$ satisfy some regularity conditions for each $j$, then, for any $x\in \m M$, $\|x-P_j(x)\|\leq C(\m M)2^{-2j}$ for all $j\geq j_0(\m M)$. 
This means that the operators $P_j$ provide an efficient ``compression scheme'' $x\mapsto P_j(x)$ for $x\in \m M$, in the sense that every $x$ can be well-approximated by a linear combination of at most $d+1$ vectors from the dictionary $\Phi_{2^{-2j}}$ formed by $\{c_{j,k}\}_{k=1}^{N(j)}$ and the union of the bases of $V_{j,k}, \ k=1\ldots N(j)$. 
Furthermore, operators efficiently encoding the ``difference'' between $P_j$ and $P_{j+1}$ were constructed, leading to a multiscale compressible representation of $\mathcal{M}$.

In practice, $\Pi$ is unknown and we only have access to the \textit{training data} $\m X_n=\{X_1,\ldots,X_n\}$, which are assumed to be i.i.d. with distribution $\Pi$. In this case, operators $P_j$ are replaced by their estimators 
$$
\widehat{P}_j(x) : = \sum_{k=1}^{N(j)} I\{x\in C_{j,k}\} \widehat{P}_{j,k}(x)
$$
where $\{C_{j,k}\}_{k=1}^{N(j)}$ is a suitable partition of $\mb R^D$ {\em{obtained from the data}},
\begin{align}
\label{eq:a30}
&\widehat{P}_{j,k}(x):= \widehat{c}_{j,k}+\proj{}_{\widehat{V}_{j,k}}(x-\widehat{c}_{j,k}),\\
\nonumber
&\widehat{c}_{j,k} := \frac{1}{\vert\m X_{j,k}\vert}\sum_{x\in \m X_{j,k}}x,\\
\nonumber
&\widehat{V}_{j,k} :=\argmin_{\dim(V)=d} \frac{1}{\vert \m X_{j,k}\vert}\sum_{x\in\m X_{j,k}}\left\|x-\widehat{c}_{j,k}-\proj {}_V(x-\widehat{c}_{j,k})\right\|^2,
\end{align}
$\m X_{j,k}=C_{j,k}\cap \m X_n$, and $\vert \m X_{j,k}\vert$ denotes the number of elements in $\m X_{j,k}$. We shall call these $\widehat{P}_j$ the {\it empirical GMRA}. 

Moreover, the dictionary $\widehat \Phi_{2^{-2j}}$ is formed by $\{\hat c_{j,k}\}_{k=1}^{N(j)}$ and the union of bases of 
$\hat V_{j,k}, \ k=1\ldots N(j)$. 
The ``encoding'' and ``decoding'' operators $\widehat{\m  D}_{2^{-2j}}$ and $\widehat{\m D}_{2^{-2j}}^{-1}$ mentioned above are now defined in the obvious way, so that $\widehat{\m D}_{2^{-2j}}^{-1}\widehat{\m D}_{2^{-2j}}(x)=\widehat{P}_{j,k}(x)$ for any $x\in C_{j,k}$.

We remark that the ``intrinsic dimension'' $d$ is assumed to be known throughout this paper. 
In practice, it can be estimated within the GMRA construction using the ``multiscale SVD'' ideas of \cite{MM:MultiscaleDimensionalityEstimationAAAI,LMR:MGM1}. 
The estimation technique is based on inspecting (for a given point $x\in C_{j,k}$) the behavior of the singular values of the covariance matrix $\Sigma_{j,k}$ as $j$ varies. 
For alternative methods, see \citet[][]{Levina2004Maximum-likelih00,camastra2001intrinsic} and references therein and in the review section of \cite{LMR:MGM1}.

\section{Main results}
\label{sec:contr}

Our main goal is to obtain \textit{probabilistic}, non-asymptotic bounds on the performance of the \textit{empirical} GMRA under certain structural assumptions on the underlying distribution of the data. 
In practice, the data rarely belongs precisely to a smooth low-dimensional submanifold. 
One way to relax this condition is to assume that it is ``sufficiently close'' to a reasonably regular set.  
Here we assume that the underlying distribution is supported in a thin tube around a manifold. We may interpret the displacement from the manifold as noise, in which case we are making no assumption on distribution of the noise besides boundedness. 
Another way to model this situation is to allow \textit{additive noise}, whence the observations are assumed to be of the form $X=Y+\xi$, where
$Y$ belongs to a submanifold of $\mb R^D$, $\xi$ is independent of $Y$, and the distribution of $\xi$ is  known. 
This leads to a singular deconvolution problem \citep[see][]{KoltchinskiiGeometry,Wasserman:ManifoldEstimationHausdorff}.
Our assumptions however may also be interpreted as relaxing the ``manifold assumption'': even in the absence of noise we do allow data to be not exactly supported on a manifold. Our results will elucidate how the error of sparse approximation via GMRA depends on the ``thickness'' of the tube, which quantifies stability and robustness properties of our algorithm.

As we mentioned before, our GMRA construction is entirely data-dependent: it takes the point cloud of cardinality $n$ as an input and for every 
$j\in \mb Z_+$ returns the partition $\{C_{j,k}\}_{k=1}^{N(j)}$ and associated affine projectors $\widehat{P}_{j,k}$.
We will measure performance of the empirical GMRA by the $L_2(\Pi)$-error 
\begin{equation}
\mb E\left\|X-\widehat P_j(X)\right\|^2:=\int\limits_{\supp(\Pi)}\left\|x-\widehat P_j(x)\right\|^2 d\Pi(x)
\label{e:errdef}
\end{equation}
or by the $\|\cdot\|_{\infty,\Pi}$-error defined as
\begin{align}
\label{eq:sup-norm}
&
\left\|\Id-\widehat P_j\right\|_{\infty,\Pi}:=\sup_{x\in \supp(\Pi)}\left\|x-\widehat P_j(x)\right\|,
\end{align}
where $\widehat P_j$ is defined by (\ref{eq:a30}). Note, in particular, that these errors are ``out-of-sample'', i.e. measure the accuracy of the GMRA representations on all possible samples, not just those used to train the GMRA, which would not correspond to a learning problem.

The presentation is structured as follows: we start from the natural decomposition
\begin{equation*}
\left\|x-\widehat P_j(x)\right\|\le \underbrace{\left\|x- P_j(x)\right\|}_{\text{approximation error}}+\underbrace{\left\|P_j(x)-\widehat P_j(x)\right\|}_{\text{random error}}
\end{equation*}
and state the general conditions on the underlying distribution and partition scheme that suffice to guarantee that 
\begin{enumerate}
\item the distribution-dependent operators $P_j$ yield good approximation, as measured by $\mb E\left\|x-P_j(x)\right\|^2$: this is the bias (squared) term, which is non-random; 
\item the empirical version $\widehat{P}_j$ is with high probability close to $P_j$, so that $\mb E\left\|\widehat{P}_j(x)-P_j(x)\right\|^2$ is small (with high probability): this is the variance term, which is random.
\end{enumerate}
This leads to our first result, Theorem \ref{thm:FSB}, where the error $\mb E\left\|x-\widehat P_j(x)\right\|^2$ of the empirical GMRA is bounded with high probability.

We will state this first result in a rather general setting (assumptions A1-A4) below), and after developing this general result, we consider the special but important case where the distribution $\Pi$ generating the data is supported in thin tube around a smooth submanifold, and
for a (concrete, efficiently computable, online) partition scheme we show that the conditions of Theorem \ref{thm:FSB} are satisfied.
This is summarized in the statement of Theorem \ref{thm:MFD}, that may be interpreted as proving finite-sample bounds for our GMRA-based dictionary learning scheme for high-dimensional data that suitably concentrates around a manifold.
It is important to note that most of the constants in our results are explicit. 
The only geometric parameters involved in the bounds are the dimension $d$ of the manifold (but not the ambient dimension $D$), its \textit{reach} (see  $\tau$ in (\ref{eq:reach1})) and the ``tube thickness'' $\sigma$. 

Among the existing literature, the papers \cite{CM:MGM2,6410789} introduced the idea of using multiscale geometric decomposition of data to estimate the distribution of points sampled in high-dimensions. However in the first paper no finite sample analysis was performed, and in the second the connection with geometric properties of the distribution of the data is not made explicit, with the conditions are expressed in terms of certain approximation spaces within the space of probability distributions in $\mathbb{R}^D$, with Wasserstein metrics used to measure distances and approximation errors. 

The recent paper by \citet{Canas2012Learning-Manifo00} is close in scope to our work; its authors present probabilistic guarantees for approximating a manifold with a global solution of the so-called $k$-flats \citep{Bradley2000K-Plane-cluster00} problem in the case of distributions supported on manifolds. 
It is important to note, however, that our estimator is explicitly and efficiently computable, while exact global solution of $k$-flats is usually unavailable and certain approximations have to be used in practice, with convergence to a global minimum is conditioned on suitable unknown initializations.  In practice it is often reported that there exist many local minima with very different values, and good initializations are not trivial to find. In this work we obtain better convergence rates, with fast algorithms, and we also seamlessly tackle the case of noise and model error, which is beyond what was studied previously. We consider this development extremely relevant in applications, both because real data is corrupted by noise and the assumption that data lies exactly on a  smooth manifold is often unrealistic.
A more detailed comparison of theoretical guarantees for $k$-flats and for our approach is given after we state the main results in Subsection \ref{sec:case} below. 

Another body of literature connected to this work studies the complexity of dictionary learning. 
For example, \citet{gribonval:hal-00918142} present general bounds for the convergence of global minimums of empirical risks in dictionary learning optimization problems (those results build on and generalize the works of \citet{DBLP:journals/tit/MaurerP10,Vainsencher:2011:SCD:1953048.2078210}, among several others). While the rates obtained in those works seem to be competitive with our rates in certain regimes, the fact that their bounds must hold over entire families of dictionaries implies that those error rates generally involve a scaling constant of the order $\sqrt{Dk}$, where $D$ is the ambient dimension and $k$ is the number of ``atoms" in the dictionary. Our bounds are independent of the ambient dimension $D$ but implicitly include terms which depend upon the number of our ``atoms.'' It should be noted that the number of atoms in the dictionary learned by GMRA increase so as to approximate the fine structure of a dataset with more precision. As such, our attainment of the minimax lower bounds for manifold estimation in the Hausdorff metric (obtained in \citep{Genovese:2012:MME:2503308.2343687}) should be expected. While dictionaries produced from dictionary learning should reveal the fine structure of a dataset through careful examination of the representations they induce, these representations are often ambiguous unless additional structure is imposed on both the dictionaries and the datasets. On the other hand, the GMRA construction induces completely unambiguous sparse representations that can be used in regression and classification tasks with confidence. 

In the course of the proof, we obtain several results that should be of independent interest. 
In particular, Lemma \ref{lem:volUB} gives upper and lower bounds for the volume of the tube around a manifold in terms of the reach (\ref{eq:reach}) and tube thickness. While the exact tubular volumes are given by Weyl's tube formula \citep[see][]{Gray2004Tubes00}, our bound are exceedingly easy to  state in terms of simple global geometric parameters. 

For the details on numerical implementation of GMRA and its modifications, see the works by \citet{CM:MGM2,CM:geometricwaveletsciss}. 

\subsection{Finite sample bounds for empirical GMRA}

In this section, we shall present the finite sample bounds for the empirical GMRA described above. 
For a fixed resolution level $j$, we first state sufficient conditions on the distribution $\Pi$ and the partition $\{C_{j,k}\}_{k=1}^{N(j)}$ for which these $L_2(\Pi)$-error bounds hold (see Theorem \ref{thm:FSB} below). 

Suppose that for all integers $j_{\min} \leq j\leq j_{\max}$ the following is true:
\begin{description}
\item[(A1)] There exists an integer $1\leq d\leq D$ and a positive constant $\theta_1=\theta_1(\Pi)$ such that for all $k=1,\ldots, N(j)$,
$$
\Pi(C_{j,k})\geq \theta_1 2^{-jd}\,.
$$
\item[(A2)] There is a positive constant $\theta_2=\theta_2(\Pi)$ such that for all $k=1,\ldots, N(j)$, if $X$ is drawn from $\Pi_{j,k}$ then, $\Pi$ - almost surely,
$$
\Vert X - c_{j,k}\Vert\leq \theta_2 2^{-j}\,.
$$
\item[(A3)] Let $\lambda^{j,k}_1\ge\ldots\ge\lambda^{j,k}_D\ge0$ denote the eigenvalues of the covariance matrix $\Sigma_{j,k}$ (defined in (\ref{eq:a25})). 
Then there exist $\sigma=\sigma(\Pi)\ge0$, $\theta_3=\theta_3(\Pi)$, $\theta_4=\theta_4(\Pi)>0$, and some $\alpha>0$ such that for all $k=1\ldots N(j)$, 
\begin{align*}
\lambda^{j,k}_d&\geq \theta_3 \frac{2^{-2j}}{d} \quad\mbox{ and }\quad
\sum\limits_{l=d+1}^D\lambda^{j,k}_l \leq \theta_4(\sigma^2+2^{-2(1+\alpha)j})\leq \frac{1}{2}\lambda^{j,k}_d.
\end{align*}
\end{description}
If in addition
\begin{description}
\item[(A4)]
There exists $\theta_5=\theta_5(\Pi)$ such that 
\begin{align*}
&
\left\|\Id-P_j\right\|_{\infty,\Pi}\leq \theta_5\left(\sigma+2^{-(1+\alpha)j}\right),
\end{align*}
\end{description}
then the bounds are also guaranteed to hold for the $\|\cdot\|_{\infty,\Pi}$-error (\ref{eq:sup-norm}).
\begin{remark}
\label{rem:1}\hfill \\
\begin{enumerate}
\item[i.] Assumption {\em(A1)} entails that the distribution assigns a reasonable amount of probability to each partition element, assumption {\em(A2)} ensures that samples from partition elements are always within a ball around the centroid, and assumption {\em(A3)} controls the effective dimensionality of the samples within each partition element. Assumption {\em(A4)} just assumes a bound on the error for the theoretical GMRA reconstruction.
\item[ii.] Note that the constants $\theta_i, \ i=1\ldots 4$, are independent of the resolution level $j$. 
\item[iii.] It is easy to see that Assumption (A3) implies a bound on the ``local approximation error'': since $P_j$ acts on $C_{j,k}$ as an affine projection on the first $d$ ``principal components'', we have  
\begin{align*}
\mb E_{j,k}\|x-P_j(x)\|^2&=\tr\left[\mb E_{j,k}\left(x-c_{j,k}-\proj{}_{V_{j,k}}(x))(x-c_{j,k}-\proj{}_{V_{j,k}}(x)\right)^T\right]\\
&
=\sum\limits_{l=d+1}^D\lambda^{j,k}_l\leq \theta_4 (\sigma^2+2^{-2(1+\alpha)j}).
\end{align*}
\item[iv.] The parameter $\sigma$ is introduced to cover ``noisy'' models, including the situations when $\Pi$ is supported in a thin tube of width $\sigma$ around a low-dimensional manifold $\m M$. 
Whenever $\Pi$ is supported on a smooth $d$-dimensional manifold, $\sigma$ can be taken to be $0$.
\item[v.] The stipulation
\begin{align*}
\theta_4(\sigma^2+2^{-2(1+\alpha)j})\leq \frac{1}{2}\lambda^{j,k}_d
\end{align*}
guarantees that the spectral gap $\lambda^{j,k}_d-\lambda^{j,k}_{d+1}$ is sufficiently large.
\end{enumerate}
\end{remark}

\noindent We are in position to state our main result. 

\begin{theorem}\label{thm:FSB}
Suppose that {\bf\em(A1)}-{\bf\em(A3)} are satisfied, let $X,X_1,\ldots,X_n$ be an i.i.d. sample from $\Pi$, and set $\bar d:=4d^2\theta_2^4/\theta_3^2$.  
Then for any $j_{\min} \leq j\leq j_{\max}$ and any $t\geq 1$ such that $t+\log(\bar d\vee 8)\leq \frac{1}{2}\theta_1 n 2^{-jd}$,  
\[
\mb E \|X-\widehat P_j(X)\|^2\leq 2\theta_4\left(\sigma^2+2^{-2j(1+\alpha)}\right)+c_1 2^{-2j}\frac{(t+\log(\bar d\vee 8))d^2}{n2^{-jd}},
\]
and if in addition {\bf\em(A4)} is satisfied,
\[
\left\|\Id-\widehat P_j\right\|_{\infty,\Pi}\leq \theta_5\left(\sigma+2^{-(1+\alpha)j}\right)
+\sqrt{\frac{c_1}{2} 2^{-2j}\frac{(t+\log(\bar d\vee 8))d^2}{n2^{-jd}}}
\]
with probability  $\geq 1-\frac{2^{jd+1}}{\theta_1}\l(e^{-t}+e^{-\frac{\theta_1}{16}n2^{-jd}}\r)$, where 
$c_1=2\l(12\sqrt{2}\frac{\theta_2^3}{\theta_3\sqrt{\theta_1}}+4\sqrt{2}\frac{\theta_2}{d\sqrt{\theta_1}}\r)^2$. 
\end{theorem}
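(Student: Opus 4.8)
The plan is to start from the triangle-inequality decomposition already highlighted in the text,
$\|x-\widehat P_j(x)\|\le \|x-P_j(x)\|+\|P_j(x)-\widehat P_j(x)\|$, and to argue cell by cell: since the $C_{j,k}$ partition $\mb R^D$, on each $C_{j,k}$ both $P_j$ and $\widehat P_j$ collapse to the affine projections $P_{j,k}$ and $\widehat P_{j,k}$. The deterministic ``approximation'' term is essentially free: for the $L_2$ bound, summing the local estimate of Remark~\ref{rem:1}(iii) against the weights $\Pi(C_{j,k})$ gives $\mb E\|X-P_j(X)\|^2\le\theta_4(\sigma^2+2^{-2(1+\alpha)j})$, and the factor $2$ in the statement is the price of $(a+b)^2\le 2a^2+2b^2$; for the sup-norm bound this term is exactly $\theta_5(\sigma+2^{-(1+\alpha)j})$ by (A4). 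Everything therefore reduces to controlling the \emph{random error} $\|P_{j,k}(x)-\widehat P_{j,k}(x)\|$ uniformly over $x\in C_{j,k}\cap\supp(\Pi)$.

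Writing $c=c_{j,k}$, $\widehat c=\widehat c_{j,k}$, $P=\proj_{V_{j,k}}$ and $\widehat P=\proj_{\widehat V_{j,k}}$, a short computation gives the key algebraic identity $P_{j,k}(x)-\widehat P_{j,k}(x)=(\Id-\widehat P)(c-\widehat c)+(P-\widehat P)(x-c)$, so that by (A2) one has $\|P_{j,k}(x)-\widehat P_{j,k}(x)\|\le\|c-\widehat c\|+\theta_2 2^{-j}\|P-\widehat P\|$. Thus it suffices to bound the centroid deviation $\|\widehat c_{j,k}-c_{j,k}\|$ and the projector deviation $\|\proj_{V_{j,k}}-\proj_{\widehat V_{j,k}}\|$; these two contributions will ultimately produce the $\theta_2/(d\sqrt{\theta_1})$ and $\theta_2^3/(\theta_3\sqrt{\theta_1})$ terms inside $c_1$, respectively.

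Next I would install the concentration machinery. First, control the random populations: by (A1), $|\m X_{j,k}|$ is $\mathrm{Binomial}(n,\Pi(C_{j,k}))$ with mean $\ge\theta_1 n2^{-jd}$, so a multiplicative Chernoff bound yields $|\m X_{j,k}|\ge\tfrac12\theta_1 n2^{-jd}$ on an event of probability $\ge 1-e^{-\theta_1 n2^{-jd}/16}$; since the cells are disjoint with mass $\ge\theta_1 2^{-jd}$, there are at most $N(j)\le 2^{jd}/\theta_1$ of them, which explains both the prefactor $2^{jd+1}/\theta_1$ and the $e^{-\theta_1 n2^{-jd}/16}$ term after a union bound. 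Conditioning on the populations, the points in each cell are i.i.d.\ from $\Pi_{j,k}$ with $\|X-c_{j,k}\|\le\theta_2 2^{-j}$ by (A2), so a vector Bernstein/Hoeffding bound gives $\|\widehat c_{j,k}-c_{j,k}\|\lesssim\theta_2 2^{-j}\sqrt{t/|\m X_{j,k}|}$, and an intrinsic-dimension matrix Bernstein inequality gives $\|\widehat\Sigma_{j,k}-\Sigma_{j,k}\|\lesssim(\theta_2 2^{-j})^2\sqrt{(t+\log(\bar d\vee 8))/|\m X_{j,k}|}$, after absorbing the $(\widehat c-c)(\widehat c-c)^T$ correction as a higher-order term. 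Here the logarithmic factor carries the intrinsic dimension $\tr(V)/\|V\|$ of the variance matrix of the outer products, which under (A3) is of order $d^2\theta_2^4/\theta_3^2=\bar d/4$, hence $\log(\bar d\vee 8)$ rather than an ambient $\log D$. Finally, (A3)(v) forces a spectral gap $\lambda^{j,k}_d-\lambda^{j,k}_{d+1}\ge\tfrac12\lambda^{j,k}_d\ge\theta_3 2^{-2j}/(2d)$, so the Davis–Kahan $\sin\Theta$ theorem converts the covariance deviation into $\|\proj_{V_{j,k}}-\proj_{\widehat V_{j,k}}\|\lesssim\frac{d}{\theta_3 2^{-2j}}\|\widehat\Sigma_{j,k}-\Sigma_{j,k}\|$, and the extra factor $d$ combines with the $\theta_2^3 2^{-j}$ prefactor to produce the $d^2$ appearing in the statement after squaring.

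Substituting $|\m X_{j,k}|\ge\tfrac12\theta_1 n2^{-jd}$ into the two deviation bounds and feeding them into the identity of the second paragraph yields, on the good event, $\|P_{j,k}(x)-\widehat P_{j,k}(x)\|\le\sqrt{\tfrac{c_1}{2}\,2^{-2j}(t+\log(\bar d\vee 8))d^2/(n2^{-jd})}$ uniformly on each cell, with the constant aggregating exactly to $\sqrt{c_1/2}$. Summing the squares against $\Pi(C_{j,k})$ then gives the $L_2$ bound (the factor $2$ restoring $c_1$), while taking the maximum over cells gives the sup-norm bound; collecting the two families of failure events over the $\le 2^{jd}/\theta_1$ cells produces the stated probability. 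The hypothesis $t+\log(\bar d\vee 8)\le\tfrac12\theta_1 n2^{-jd}$ is precisely what keeps the deviations in the regime where the sub-Gaussian term of matrix Bernstein dominates its linear term and where the perturbation is small enough for Davis–Kahan to apply. I expect the main obstacle to be the third paragraph: obtaining matrix Bernstein with the correct intrinsic-dimension logarithm $\log(\bar d\vee 8)$, handling the empirical-mean correction in $\widehat\Sigma_{j,k}$ cleanly, and tracking every constant through Davis–Kahan so that they assemble exactly into $c_1$; the conditioning-on-population step also needs care to keep the within-cell samples genuinely i.i.d.
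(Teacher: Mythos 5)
Your proposal is correct and follows essentially the same route as the paper: the bias--variance split with Remark \ref{rem:1}(iii) for the deterministic term, the cell-wise reduction to $\|c_{j,k}-\widehat c_{j,k}\|$ and $\|\proj{}_{V_{j,k}}-\proj{}_{\widehat V_{j,k}}\|$, the intrinsic-dimension matrix Bernstein inequality (Theorem \ref{th:bernstein}, which gives exactly your $\bar d$ and $\log 8$ via the Hermitian dilation for the centroid), Davis--Kahan with the spectral gap from {\bf(A3)}, the Chernoff/Bernstein control of the cell counts $m_{j,k}\geq n_{j,k}/2$, and the union bound over $N(j)\leq 2^{jd}/\theta_1$ cells. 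Your identity $P_{j,k}(x)-\widehat P_{j,k}(x)=(\Id-\proj{}_{\widehat V_{j,k}})(c_{j,k}-\widehat c_{j,k})+(\proj{}_{V_{j,k}}-\proj{}_{\widehat V_{j,k}})(x-c_{j,k})$ is in fact a slight sharpening of the paper's bound \eqref{eq:initbnd} (factor $1$ instead of $2$ on the centroid term), so you would obtain a marginally smaller constant than $c_1$, which still yields the stated conclusion.
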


\subsection{Distributions concentrated near smooth manifolds}
\label{sec:case}

Of course, the statement of Theorem \ref{thm:FSB} has little value unless assumptions {\bf(A1)}-{\bf(A4)} can be verified for a rich class of underlying distributions. 
We now introduce an important class of models and an algorithm to construct suitable partitions $\{C_{j,k}\}$ which together satisfy these assumptions. 
Let $\m M$ be a smooth (or at least $C^2$, so changes of coordinate charts admit continuous second-order derivatives), closed $d$-dimensional submanifold of $\mb R^D$. 
We recall the definition of the \textit{reach} \citep[see][]{federer1959curvature}, an important global characteristic of $\m M$. Let 
\begin{align}
\label{eq:reach}
D(\m M)&=\{y\in \mb R^D: \exists ! x\in \m M \text{ s.t. } \|x-y\|=\inf_{z\in \m M}\|z-y\|\}, 
\\ 
\label{eq:tube}
\m M_r&=\{y\in \mb R^D: \ \inf_{x\in \m M}\|x-y\| < r\}.
\end{align}
Then 
\begin{align}
\label{eq:reach1}
{\rm reach}(\m M):=\sup\{r\geq 0: \ \m M_r\subseteq D(\m M)\},
\end{align} 
and we shall always use $\tau$ to denote the reach of the manifold $\m M$.

\begin{definition}
Assume that $0\leq\sigma < \tau$. 
We shall say that the distribution $\Pi$ satisfies the {\bf{$(\boldsymbol\tau,\boldsymbol\sigma)$-model assumption}} if there exists a smooth (or at least $C^2$), compact submanifold 
$\m M\hookrightarrow \mb R^D$ with reach $\tau$ such that $\supp(\Pi)=\m M_\sigma$, 
$\Pi$ and $\mathcal{U}_{\m M_\sigma}$ (the uniform distribution on $\m M_\sigma$) are absolutely continuous with respect to each other,  and so Radon-Nikodym derivative $\frac{d\Pi}{dU_{\m M_\sigma}}$ satisfies
\begin{align}
\label{eq:tau-sigma}
0<\phi_1\leq \frac{d\Pi}{d \mathcal{U}_{\m M_\sigma}} \leq \phi_2<\infty\qquad \mathcal{U}_{\m M_\sigma} \text{- almost surely}\,.
\end{align} 
\label{d:tausigma}
\end{definition}

\begin{example}
Consider the unit sphere of radius $R$ in $\mathbb{R}^D$, $\mathcal{S}_R$. Then $\tau=R$ for this manifold, and for any $\sigma<R$, the uniform distribution on the set $B(0,R+\sigma)\setminus B(0,R-\sigma)$ satisfies the $(\sigma,\tau)$-model assumption. On the other hand, taking the uniform distribution on a $\sigma$-thickening of the union of two line segments emanating from the origin produces a distribution which does not satisfy the $(\sigma,\tau)$ model assumption. In particular, $\tau=0$ for the underlying manifold.
\end{example}

\begin{remark}
We will implicitly assume that constants $\phi_1$ and $\phi_2$ do not depend on the ambient dimension $D$ (or depend on a slowly growing function of $D$, such as $\log D$)  - the bound of Theorem \ref{thm:MFD} shows that this is the ``interesting case''. 
On the other hand, we often do not need the full power of $(\tau,\sigma)$ - model assumption, see the Remark \ref{rmk:tau-sigma} after Theorem \ref{thm:MFD}.
\end{remark}

Our partitioning scheme is based on the data structure known as the \textit{cover tree} introduced by \citet{LangfordICML06-CoverTree} \citep[see also][]{Karger:2002:FNN:509907.510013,Yianilos:1993:DSA:313559.313789,Ciaccia97indexingmetric}.
We briefly recall its definition and basic properties. 
Given a set of $n$ distinct points $S_n=\{x_1,\ldots,x_n\}$ in some metric space $(S,\rho)$, the cover tree $T$ on $S_n$ satisfies the following: 
let $T_j\subset S_n, \ j=0,1,2,\ldots$ be the set of nodes of $T$ at level $j$. 
Then
\begin{enumerate}
\item $T_j\subset T_{j+1}$;
\item for all $y\in T_{j+1}$, there exists $z\in T_j$ such that $\rho(y,z)<2^{-j}$;
\item for all $y,z\in T_j$, $\rho(y,z)>2^{-j}$.
\end{enumerate}
\begin{remark}
\label{rem:2}
Note that these properties imply the following: for any $y\in S_n$, there exists $z\in T_j$ such that $\rho(y,z)<2^{-j+1}$.
\end{remark}
Theorem 3 in \citep{LangfordICML06-CoverTree} shows that the cover tree always exists; for more details, see the aforementioned paper.

We will construct a cover tree for the collection $X_1,\ldots,X_n$ of i.i.d. samples from the distribution $\Pi$ with respect to the Euclidean distance 
$\rho(x,y):=\|x-y\|$. 
Assume that $T_j:=T_j(X_1,\ldots,X_n)=\{a_{j,k}\}_{k=1}^{N(j)}$. 
Define the indexing map
\[
k(x):=\argmin_{1\leq k\leq N(j)}\|x-a_{j,k}\|
\]
(ties are broken by choosing the smallest value of  $k$), 
and partition $\mb R^D$ into the Voronoi regions
\begin{align}
\label{eq:part}
C_{j,k} = \{x\in\mb R^D: k_j(x)=k\}.
\end{align}
Let $\eps(n,t)$ be the smallest $\eps>0$ which satisfies
\begin{align}
\label{eq:eps}
n\geq \frac{1}{\phi_1}\l(\frac{\tau+\sigma}{\tau-\sigma}\r)^d \beta_1\l(\log \beta_2+t\r),
\end{align}
where $\beta_1= \frac{\vol(\m M)}{\cos^d(\delta_1)\Vol(B_d(0,\eps/4))}$, $\beta_2= \frac{\vol(\m M)}{\cos^d(\delta_2)\Vol(B_d(0,\eps/8))}$, $\delta_1=\arcsin(\eps/8\tau)$, and $\delta_2=\arcsin(\eps/16\tau)$. 

\begin{remark}
For large enough $n$, this requirement translates into $n\geq C(\m M,d,\phi_1)\l(\frac 1 \eps\r)^d \left(\log \frac 1 \eps+t\right)$ for some constant $C(\m M,d,\phi_1)$.  
\end{remark}

\noindent We are ready to state the main result of this section. 

\begin{theorem}
\label{thm:MFD}
Suppose that $\Pi$ satisfies the $(\tau,\sigma)$-model assumption. 
Let $X_1,\ldots, X_n$ be an i.i.d. sample from $\Pi$, construct a cover tree $T$ from $\{X_i\}_{i=1}^n$, and define $C_{j,k}$ as in \eqref{eq:part}.
Assume that $\eps(n,t) < \sigma$. 
Then, for all $j\in \mb Z_+$ such that $2^{-j}>8\sigma$ and $3\cdot 2^{-j}+\sigma<\tau/8$, 
the partition $\{C_{j,k}\}_{k=1}^{N(j)}$ and $\Pi$ satisfy {\bf\em(A1)}, {\bf\em(A2)}, {\bf\em(A3)}, and {\bf\em(A4)} with probability $\geq 1-e^{-t}$ for
\begin{align*}
\theta_1&=\frac{\phi_1\Vol(B_d(0,1))}{2^{4d}\vol(\m M)}\l(\frac{\tau-\sigma}{\tau+\sigma}\r)^d,\\
\theta_2&=12,\\
\theta_3&=\frac{\phi_1/\phi_2}{2^{4d+8}\l(1+\frac{\sigma}{\tau}\r)^d},\\
\theta_4&=2\vee \frac{2^3 3^4}{\tau^2},\\
\theta_5&=\left(2\vee\frac{2^2 3^2}{\tau}\right)\left(1+ 3\cdot 2^5\sqrt{2d}\left(1+\frac{\sigma}{\tau}\right)^{d/2}
\left(\frac{1+\left(\frac{25}{71}\right)^2}{1-\frac{1}{9\cdot 2^{12}}}\right)^{d/4}\right),\\
\alpha &= 1.
\end{align*}
\end{theorem}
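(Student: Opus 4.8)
The plan is to verify each of the four assumptions (A1)–(A4) separately for the cover-tree partition, with the $(\tau,\sigma)$-model assumption providing the geometric control that lets us translate intrinsic-manifold quantities into bounds on the conditional distributions $\Pi_{j,k}$. The unifying technical tool will be \emph{Lemma \ref{lem:volUB}} (the tubular-volume bounds advertised in the introduction): it converts the uniform-measure statements about $\m M_\sigma$ into comparisons between $\Vol(B_d(0,r))$ and the $\Pi$-mass of geodesic/Euclidean balls. The factor $\left(\frac{\tau\pm\sigma}{\tau\mp\sigma}\right)^d$ appearing throughout the $\theta_i$ is exactly the distortion one picks up when comparing the volume of a tube slab at radius $\sigma$ to the flat $d$-dimensional volume, so I expect every constant to be assembled from (i) a volume ratio coming from Lemma \ref{lem:volUB}, (ii) the density bounds $\phi_1,\phi_2$, and (iii) the cover-tree separation/covering radii $2^{-j}$ and $2^{-j+1}$ from Remark \ref{rem:2}.

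\textbf{Assumptions (A1) and (A2).} First I would control the \emph{geometry of each Voronoi cell} $C_{j,k}$. The cover-tree properties give diameter control: every point of $S_n$ lies within $2^{-j+1}$ of some node (Remark \ref{rem:2}), and distinct nodes are $2^{-j}$-separated. Under the event $\eps(n,t)<\sigma$, the sample is dense enough (this is precisely the meaning of the definition of $\eps(n,t)$ through \eqref{eq:eps}) that the Voronoi cell $C_{j,k}\cap\supp(\Pi)$ sits inside a Euclidean ball of radius a small multiple of $2^{-j}$ around $a_{j,k}$; chasing the constants gives the radius $\theta_2\,2^{-j}$ with $\theta_2=12$, and after recentering at the \emph{conditional mean} $c_{j,k}$ (rather than the node) one still gets $\|X-c_{j,k}\|\le 12\cdot 2^{-j}$, which is (A2). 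For (A1) I would lower-bound $\Pi(C_{j,k})$: since $C_{j,k}$ contains a Euclidean ball of radius $\sim 2^{-j}$ about $a_{j,k}$, its intersection with $\m M_\sigma$ has $\m U_{\m M_\sigma}$-measure at least a volume ratio $\frac{\Vol(B_d(0,1))}{\vol(\m M)}\left(\frac{\tau-\sigma}{\tau+\sigma}\right)^d 2^{-jd}$ (again via Lemma \ref{lem:volUB}), and multiplying by the density lower bound $\phi_1$ and absorbing the dyadic constants into $2^{4d}$ yields $\theta_1$.

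\textbf{Assumption (A3), the spectral gap.} This is the heart of the matter and I expect it to be \textbf{the main obstacle.} I must show the $d$-th eigenvalue $\lambda^{j,k}_d$ of the local covariance $\Sigma_{j,k}$ is bounded below by $\theta_3\,2^{-2j}/d$ while the trailing eigenvalues sum to at most $\theta_4(\sigma^2+2^{-2(1+\alpha)j})$ with $\alpha=1$. The upper bound on $\sum_{l>d}\lambda^{j,k}_l$ is the local approximation error of Remark \ref{rem:1}(iii): on a patch of radius $\sim 2^{-j}$ a $C^2$ manifold is within $O(2^{-2j}/\tau)$ of its tangent plane by the reach bound, and the tube adds an $O(\sigma^2)$ transverse contribution, giving the $\frac{2^3 3^4}{\tau^2}$ in $\theta_4$ and forcing the exponent $\alpha=1$. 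The lower bound on $\lambda^{j,k}_d$ is the delicate direction: I would argue that the conditional distribution has genuinely $d$-dimensional spread by comparing $\Sigma_{j,k}$ to the covariance of the uniform distribution on a $d$-dimensional ball of radius $\sim 2^{-j}$, which has smallest eigenvalue of order $2^{-2j}/d$; the density ratio $\phi_1/\phi_2$ and the tube-distortion factor $(1+\sigma/\tau)^d$ then produce $\theta_3$. The technical difficulty is that the cell $C_{j,k}$ is an \emph{irregular Voronoi region}, not a nice geodesic ball, so establishing the non-degeneracy of the smallest tangential eigenvalue requires showing the cell contains a well-proportioned $d$-dimensional seed (a geodesic ball of radius comparable to its diameter), which in turn leans on the cover-tree separation property and the density lower bound. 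Finally the stipulation $\theta_4(\sigma^2+2^{-2(1+\alpha)j})\le\frac12\lambda^{j,k}_d$ of Remark \ref{rem:1}(v) must be checked consistent with the scale constraints $2^{-j}>8\sigma$ and $3\cdot2^{-j}+\sigma<\tau/8$, which is what pins down the numerical thresholds in those hypotheses.

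\textbf{Assumption (A4).} The sup-norm approximation bound $\|\Id-P_j\|_{\infty,\Pi}\le\theta_5(\sigma+2^{-(1+\alpha)j})$ upgrades the $L^2$ approximation estimate of (A3)(iii) to a pointwise-over-the-support statement. Here I would bound, for $x\in C_{j,k}\cap\m M_\sigma$, the distance from $x$ to the affine plane $c_{j,k}+V_{j,k}$ by (i) the distance from $x$ to its tangent-plane projection on $\m M$ (an $O(\sigma)$ term), plus (ii) the deviation between the best local plane $V_{j,k}$ and the tangent plane, controlled by the second-order $C^2$/reach bound $O(2^{-2j}/\tau)$ and the Davis–Kahan-type perturbation from the spectral gap secured in (A3). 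The product of the gap-dependent constant (explaining the $(1+\sigma/\tau)^{d/2}$ and the quartic-root factors) with the curvature constant $\frac{2^2 3^2}{\tau}$ assembles into $\theta_5$. Throughout, since these are statements about the \emph{population} operators $P_j$ and the geometry of $\m M_\sigma$, the only randomness is the event that the cover tree is built from a sufficiently dense sample, which holds with probability $\ge 1-e^{-t}$ by the defining inequality \eqref{eq:eps}; combining this with the (A1)–(A4) verifications gives the stated probability bound.
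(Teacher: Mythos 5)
Your proposal reproduces the paper's proof architecture almost exactly: the probability $1-e^{-t}$ comes from the single net event (the paper's Proposition \ref{prop:net}, a translation of Niyogi--Smale--Weinberger's density result using the pushforward comparison from Lemma \ref{lem:volUB}); the cell geometry you describe is the paper's Lemma \ref{lem:covering}, giving $B(z_{j,k},2^{-j-2})\subseteq C_{j,k}$ and $C_{j,k}\cap\m M_\sigma\subseteq B(z_{j,k},3\cdot 2^{-j})$, from which (A1) follows via Lemma \ref{lem:volUB} and the density lower bound $\phi_1$, and (A2) with $\theta_2=12$ is immediate; and your (A3) plan is implemented verbatim by Lemmas \ref{lem:evalUB} (tangent-plane approximation plus tube width, giving $\theta_4$ and forcing $\alpha=1$) and \ref{lem:evalLB}, where your ``well-proportioned $d$-dimensional seed'' is precisely the inscribed ball $B(z_{j,k},2^{-j-2})$ guaranteed by the cover-tree separation, and the comparison to the uniform measure on a flat $d$-ball is carried out by the change of variables through the local inverse of $\proj{}_{y+T_y\m M}$ (Lemma \ref{lem:inject}, Proposition \ref{prop:diffBounds}). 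The one genuine deviation is in (A4): you propose a Davis--Kahan-type perturbation bound to control the tilt of $V_{j,k}$ away from the tangent plane, whereas the paper's Lemma \ref{lem:sup-norm} bounds the angle directly by a Rayleigh-quotient argument, picking a unit vector $u_\ast\in V_d^\perp$ with tangential component at least $\alpha$ and squeezing $\mb E\dotp{u_\ast}{Z-\mb EZ}^2$ between a lower bound proportional to $\alpha^2\lambda_d$-type quantities and the upper bound $2\sigma^2+8r_2^4/\tau^2$. Your route is viable but costlier: Davis--Kahan requires fabricating a reference matrix (e.g.\ the covariance of the tangentially projected variable), verifying the gap hypothesis $\|\Sigma-\tilde\Sigma\|<\delta_d/2$ against the exponentially-small-in-$d$ lower bound $\theta_3 2^{-2j}/d$, and yields an angle bound of order $d(\sigma+r^2/\tau)/r$ rather than the paper's $\sqrt{d}$-scaling, so you would recover (A4) in form but with a lossier $\theta_5$ than the explicit constant stated in the theorem; note also that in the paper Davis--Kahan is reserved for the empirical-versus-population comparison in Theorem \ref{thm:FSB}, not for the population-level geometry here.
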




One may combine the results of Theorem \ref{thm:MFD} and Theorem \ref{thm:FSB} as follows: 
given an i.i.d. sample $X_1,\ldots, X_n$ from $\Pi$, use the first $\lceil \frac n2\rceil$ points  $\{X_1,\ldots,X_{\lceil \frac n2\rceil}\}$ to obtain the partition $\{C_{j,k}\}_{k=1}^{N(j)}$, while the remaining $\{X_{\lceil \frac n2\rceil+1},\ldots,X_{n}\}$ are used to construct the operator
$\hat P_j$ (see (\ref{eq:a30})). 
This makes our GMRA construction entirely (cover tree, partitions, affine linear projections) data-dependent.
We observe that since our approximations are piecewise linear, they are insensitive to regularity of the manifold beyond first order, so the estimates saturate at $\alpha=1$.

When $\sigma$ is very small or equal to $0$, the bounds resulting from Theorem \ref{thm:FSB} can be ``optimized'' over $j$ to get the following statement (we present only the bounds for the $L_2(\Pi)$ error, but the results $\|\cdot\|_{\infty,\Pi}$ are similar).
\begin{corollary}
\label{cor:noiseless}
Assume that conditions of Theorem \ref{thm:MFD} hold, and that $n$ is sufficiently large. 
Then for all $A\geq 1$ such that $A\log n\leq c_4 n$, the following holds:
\begin{description}
\item[(a)]
if $d\in\{1,2\}$,
\[
\inf\limits_{j\in \mb Z: 2^{-j}<\tau/24}\mb E\|x-\widehat{P}_{j}(x)\|^2\leq C_1 \left(\frac{\log n}{n}\right)^{\frac{2}{d}};
\]
\item[(b)]
if $d\geq 3$, 
\begin{align}
\label{eq:rate}
\inf\limits_{j\in \mb Z: 2^{-j}<\tau/24}\mb E\|x-\widehat{P}_{j}(x)\|^2\leq C_2 \left(\frac{\log n}{n}\right)^{\frac{4}{d+2}}
\end{align}
\end{description}
with probability $\geq 1-c_3 n^{-A}$, where $C_1$ and $C_2$ depend only on $A,\tau,d,\phi_1/\phi_2, \vol(\m M)$ and $c_3,c_4$ depend only on $\tau,d,\phi_1/\phi_2, \vol(\m M)$.
\end{corollary}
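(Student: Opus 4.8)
The plan is to substitute the explicit constants of Theorem \ref{thm:MFD} (in particular $\alpha=1$, and $\sigma=0$ since we are in the noiseless regime) into the $L_2(\Pi)$-bound of Theorem \ref{thm:FSB}, and then to choose the two free parameters $t$ and $j$ so as to minimize the right-hand side while keeping the failure probability at the prescribed level $c_3 n^{-A}$. With $\sigma=0$ and $\alpha=1$ the bound reads
\[
\mb E\|X-\widehat P_j(X)\|^2 \le 2\theta_4\, 2^{-4j} + c_1 d^2\bigl(t+\log(\bar d\vee 8)\bigr)\frac{2^{j(d-2)}}{n},
\]
a bias term decaying in $j$ plus a variance term whose monotonicity in $j$ is governed by the sign of $d-2$.

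First I would fix the confidence parameter. The failure probability in Theorem \ref{thm:FSB} is at most $\frac{2^{jd+1}}{\theta_1}\bigl(e^{-t}+e^{-\frac{\theta_1}{16}n2^{-jd}}\bigr)$, so to force it below $c_3 n^{-A}$ I take $t$ of order $A\log n + jd\log 2$ (plus $\theta_1$-dependent constants) to control the first exponential, and separately require $n2^{-jd}\gtrsim A\log n + jd$ to control the second. This last requirement, together with the hypothesis $t+\log(\bar d\vee 8)\le \tfrac12\theta_1 n2^{-jd}$, is exactly the ``enough points per cell'' constraint that caps how large $j$ may be chosen. At the relevant scales $jd=O(\log n)$, so the factor $t+\log(\bar d\vee 8)$ is itself $O(\log n)$, which is precisely the source of the logarithmic factor appearing in the rate.

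I would then optimize over the integer $j$ (with $2^{-j}<\tau/24$) in two regimes. For $d\ge 3$ the variance exponent $d-2$ is positive, so the variance grows with $j$ and one balances the two terms: taking $j^\ast$ with $2^{-j^\ast}\asymp(\log n/n)^{1/(d+2)}$ gives the rate $(\log n/n)^{4/(d+2)}$ of part (b). For $d\in\{1,2\}$ the exponent $d-2\le 0$, so both terms are non-increasing in $j$; I instead push $j$ to the largest value permitted by the cell-occupancy constraint, $2^{-j^\ast d}\asymp A\log n/n$, at which scale the variance term dominates and equals $(\log n/n)^{2/d}$, the rate of part (a), with the bias term of strictly smaller order $(\log n/n)^{4/d}$. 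Rounding $j^\ast$ to the nearest integer affects only multiplicative constants.

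The steps requiring care are: verifying that the chosen $j^\ast$ actually lies in the admissible range — namely $2^{-j}<\tau/24$ and $n2^{-j^\ast d}\gtrsim A\log n$ — which is where the hypothesis ``$n$ sufficiently large'' is used; and tracking how the explicit constants $\theta_1,\dots,\theta_5$, $\bar d$, and $c_1$ propagate into $C_1,C_2$ and $c_3,c_4$. Since those $\theta_i$ depend only on $d,\tau,\phi_1/\phi_2,\vol(\m M)$, the final constants inherit exactly the stated dependence, with $c_3,c_4$ free of $A$ because $A$ enters only through the choice of $t$. The one genuine subtlety is the coupling between $t$ and the variance: because $t$ must scale like $A\log n+jd$ to secure the confidence, the variance is inflated by this same logarithmic factor, so one must check that substituting the optimizing $j^\ast$ creates no feedback that alters the exponent. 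It does not, because $jd$ remains logarithmic in $n$ while the genuinely polynomial quantities $2^{-4j}$ and $2^{j(d-2)}/n$ determine the rate.
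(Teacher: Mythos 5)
Your proposal is correct and follows essentially the same route as the paper's proof: substitute the constants of Theorem \ref{thm:MFD} into Theorem \ref{thm:FSB}, take $t\asymp A\log n$ (the paper uses $t=(A+1)\log n$, absorbing the $2^{jd+1}/\theta_1$ prefactor via $2^{jd}\lesssim n$), and choose $2^{-j}\asymp(\log n/n)^{1/(d+2)}$ to balance bias and variance when $d\geq 3$, versus the finest occupancy-admissible scale $2^{-jd}\asymp 16t/(\theta_1 n)$ when $d\in\{1,2\}$. Your additional checks --- the sign of the variance exponent $d-2$, the bias being of smaller order $(\log n/n)^{4/d}$ in case (a), and the absence of feedback from the logarithmic $t$ into the polynomial rate --- are exactly the verifications the paper leaves implicit in its two-line proof.
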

\begin{proof}
In case (a), it is enough to set $t:=(A+1)\log n$, $2^{-j}:=\left(\frac{16t}{\theta_1 n}\right)^{1/d}$, and apply Theorem \ref{thm:FSB}. 
For case (b), set $t:=(A+1)\log n$ and $2^{-j}:=\left(\frac{A\log n}{n}\right)^{\frac{1}{d+2}}$.
\end{proof}
Finally, we note that the claims {\em ii.} and {\em iii.} stated in the beginning of Section \ref{sec:def} easily follow from our general results (it is enough to choose $n$ such that $\eps\simeq n^{-\frac{2}{d+2}}$ and $2^{-j}=\sqrt{\eps}$). 
Claim {\em i.} follows from assumption \textbf{(A1)} and Theorem \ref{thm:MFD}. 
Computational complexity bounds {\em iv.} follow from the associated computational cost estimates for the cover trees algorithm and the randomized singular value decomposition, and are discussed in detail in Sections 3 and 8 of \citep{CM:MGM2}.

\begin{remark}
\label{rmk:tau-sigma}
It follows from our proof that it is sufficient to assume a weaker (but somewhat more technical) form of $(\tau,\sigma)$-model condition for the conclusion of Theorem \ref{thm:MFD} to hold. 
Namely, let $\tilde \Pi$ be the pushforward of $\Pi$ under the projection $\proj{}_{\m M}:\m M_\sigma\rightarrow\m M$, and assume that there exists $\tilde \phi_1>0$ such that for any measurable $A\subseteq \m M$ 
\[
\tilde \Pi(A):=\Pi\l(\proj{}_{\m M}^{-1}(A)\r)\geq \tilde\phi_1 U_\m M(A).
\]
Moreover, suppose that there exists $\tilde \phi_2>0$ such that for any $y\in \m M$, any set $A\subset \m M_\sigma$ and any $\tau>r\geq 2\sigma$ such that 
$B(y,r)\cap \m M_\sigma \subseteq A\subseteq B(y,12r)$, we have
\[
\Pi(A)\leq \tilde \phi_2 \m U_{\m M_\sigma}(A).
\]
In some circumstances, checking these two conditions is not hard (e.g., when $\m M$ is a sphere, $Y$ is uniformly distributed on $\m M$, $\eta$ is spherically symmetric ``noise'' independent of $Y$ and such that $\|\eta\|\leq \sigma$, and $\Pi$ is the distribution of $Y+\eta$), but 
$(\tau,\sigma)$ - assumption does not need to hold with constants $\phi_1$ and $\phi_2$ independent of $D$. 
\end{remark}

\subsection{Connections to the previous work and further remarks}

It is useful to compare our rates with results of Theorem 4 in \citep{Canas2012Learning-Manifo00}. 
In particular, this theorem implies that, given a sample of size $n$ from the Borel probability measure $\Pi$ on the smooth $d$-dimensional manifold $\m M$, the $L_2(\Pi)$-error of approximation of $\m M$ by $k_n=C_1(\m M,\Pi)n^{d/(2(d+4))}$ affine subspaces is bounded by 
$C_2(\m M,\Pi)n^{-2/(d+4)}$. 
Here, the dependence of $k_n$ on $n$ is ``optimal'' in a sense that it minimizes the upper bound for the risk obtained in \citep{Canas2012Learning-Manifo00}. 
If we set $\sigma=0$ in our results, then it easily follows from Theorems \ref{thm:MFD} and \ref{thm:FSB} that the $L_2(\Pi)$-error achieved by our GMRA construction for $2^j\simeq n^{\frac{1}{2(d+4)}}$ (so that $N(j)\simeq k_n$ to make the results comparable) is of the same order $n^{-\frac{2}{d+4}}$. 
However, this choice of $j$ is not optimal in this case - in particular, setting $2^{j_n}\simeq n^{\frac{1}{d+2}}$, we obtain as in (\ref{eq:rate}) a $L_2(\Pi)$-error of order $n^{-\frac{2}{d+2}}$, which is a faster rate. 
Moreover, we also obtain results in the $\sup$ norm, and not only for mean square error.
We should note that technically our results require the stronger condition (\ref{eq:tau-sigma}) on the underlying measure $\Pi$, while theoretical guarantees in \citep{Canas2012Learning-Manifo00} are obtained assuming only the upper bound $\frac{d\Pi}{dU_\m M}\leq \phi_2<\infty$, where  $U_\m M:=\frac{d\vol}{\vol(\m M)}$ is the uniform distribution over $\m M$. 

The rate \eqref{eq:rate} is the same (up to log-factors) as the minimax rate obtained for the problem considered in \citep{Genovese:2012:MME:2503308.2343687} of estimating a manifold from the samples corrupted with the additive noise that is ``normal to the manifold''. 
Our theorems are stated under more general conditions, however, we only prove \emph{robustness-type} results and do not address the problem of \emph{denoising}. 
At the same time, the estimator proposed in \citep{Genovese:2012:MME:2503308.2343687} is (unlike our method) not suitable for applications. 
The paper \citep{Wasserman:ManifoldEstimationHausdorff} considers (among other problems) the noiseless case of manifold estimation under Hausdorff loss, and obtains the minimax rate of order $n^{-\frac2d}$.
Performed numerical simulation (see Section \ref{sec:numerical}) suggest that our construction also appears to achieve this rate in the noiseless case. 
However, our main focus is on the case $\sigma>0$. 

The work of \cite{TestingManifoldHyp} establishes the sampling complexity of testing the hypothesis if an unknown distribution is close to being on a manifold (with known reach, volume, dimension) in the Mean Squared sense, is also related to the work discussed in this section, and to the present one. While our results do imply that if we have enough points, as prescribed by our main theorems, and the MSE does not decay as prescribed, then the data with high probability does not satisfy the geometric assumptions in the corresponding theorem, this is still different from the hypothesis testing problem. There may distributions not satisfying our assumptions, such that GMRA still yields good approximations: in fact we welcome and do not rule out these situations. \cite{TestingManifoldHyp} also present an algorithm for constructing an approximation to the manifold; however such an algorithm does not seem easy to implement in practice. The emphasis in this work is on moving to a more general setting than the manifold setting, focusing on multiscale approaches that are robust (locally, because of SVD, as well as across scales), and fast, easily implementable algorithms.


We remark that we analyze the case of one manifold $\mathcal{M}$, and its ``perturbation'' in the sense of having a measure supported in a tube around $\mathcal{M}$. Our construction however is multiscale and in particular local. Many extensions are immediate, for example to the case of multiple manifolds (possibly of different dimensions) with non-intersecting tubes around them. The case of unbounded noise is also of interest: if the noise has sub-Gaussian tails then very few points are outside a tube of radius dependent on the sub-Gaussian moment, and these ``outliers'' are easily disregarded as there are few and far away, so they do not affect the construction and the analysis at fine scales. Another situation is when there are many gross outliers, for example points uniformly distributed in high-dimension in, say, a cube containing $\mathcal{M}$. But then the volume of such cube is so large that unless the number of points is huge (at least exponential in the ambient dimension $D$), almost all of these points are in fact far from each other and from $\mathcal{M}$ with very high probability, so that again they do affect the analysis and the algorithms. These are some of the advantages of the multiscale approach, which would otherwise have the potential of corrupting the results (or complicating the analysis of) other global algorithms, such as $k$-flats.

\section{Preliminaries}
\label{sec:prelim}

This section contains the remaining definitions and preliminary technical facts that will be used in the proofs of our main results.

Given a point $y$ on the manifold $\m M$, let $T_y \m M$ be the associated tangent space, and let $T_y^\perp\m M$ be the orthogonal complement of $T_y\m M$ in $\mathbb{R}^D$. 
We define the projection from the tube $\m M_\sigma$ (see (\ref{eq:tube})) onto the manifold $\proj{}_{\m M}:\m M_\sigma\rightarrow \m M$ by
\[
\proj{}_{\m M}(x) = \argmin_{y\in\m M}\Vert x-y\Vert
\]
and note that $\sigma<\tau$, together with (\ref{eq:reach}), implies that $\proj{}_{\m M}$ is well-defined  on $\m M_\sigma$, and
\[
\proj{}_{\m M}(y+\xi) = y
\]
whenever $y\in \m M$ and $\xi\in T_y^\perp \m M \cap B(0,\sigma)$.

Next, we recall some facts about the volumes of parallelotopes that will prove useful in Section \ref{sec:proofs}. For a matrix $A\in \mb R^{k\times l}$ with $l\leq k$, we shall abuse our previous notation and let $\Vol(A)$ also denote the volume of the parallelotope formed by the columns of $A$. Let $A$ and $B$ be $k\times l_1$ and $k\times l_2$ matrices respectively with $l_1+l_2\leq k$, and note that
$$
\Vol([A \,|\, B]) \leq \Vol(A)\Vol(B)
$$
where $([A \,|\, B])$ denotes the concatenation of $A$ and $B$ into a $k\times (l_1+l_2)$ matrix. 
Moreover, if the columns of $A$ and $B$ are all mutually orthogonal, we clearly have that
$
\Vol([A \,|\, B]) = \Vol(A)\Vol(B)
$.
Assuming that $I$ is the $l_1\times l_1$ identity matrix, we have the bound
$
\Vol\begin{pmatrix}
			A\\
			I
			\end{pmatrix}\geq 1
$.
The following proposition gives volume bounds for specific types of perturbations that we shall encounter. 
\begin{proposition}\label{prop:volume}
Suppose $Y=[y_1|\cdots |y_d]$ is symmetric $d$ by $d$ matrix such that $\Vert Y\Vert\leq q<1$. 
Then
\begin{eqnarray*}
\Vol\begin{pmatrix}
I+Y\\
X
\end{pmatrix}
&\leq& (1+q)^d\,\Vol\begin{pmatrix}
I\\
X
\end{pmatrix}
\\
\Vol\begin{pmatrix}
I+Y& X^T\\
X & -I
\end{pmatrix}
&\geq&
 (1-q)^d\,\Vol\begin{pmatrix}
I & X^T\\
X & -I
\end{pmatrix}.
\end{eqnarray*}
\end{proposition}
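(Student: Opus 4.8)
The plan is to reduce both inequalities to comparisons of determinants of $d\times d$ symmetric positive-definite matrices, and then to exploit the Loewner (positive-semidefinite) ordering together with the monotonicity of the determinant on the cone of positive-semidefinite matrices. The symmetry hypothesis on $Y$ enters in two places: it forces $(I+Y)^T(I+Y)=(I+Y)^2$, and it guarantees that $Y$ has real eigenvalues lying in $[-q,q]$, i.e. $-qI\preceq Y\preceq qI$.

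For the first inequality I would write $X$ as a $p\times d$ matrix, so that $\begin{pmatrix} I+Y \\ X \end{pmatrix}$ has $d$ columns in $\mb R^{d+p}$ and its squared parallelotope volume equals the Gram determinant
\[
\Vol\begin{pmatrix} I+Y \\ X \end{pmatrix}^2 = \det\!\left[(I+Y)^T(I+Y) + X^T X\right] = \det\!\left[(I+Y)^2 + X^T X\right],
\]
and likewise $\Vol\begin{pmatrix} I \\ X \end{pmatrix}^2 = \det[I + X^T X]$. Since the eigenvalues of $Y$ lie in $[-q,q]$, one has $(I+Y)^2\preceq(1+q)^2 I$; adding $X^TX$ and using $(1+q)^2\geq 1$ yields $(I+Y)^2 + X^T X \preceq (1+q)^2(I + X^T X)$. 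Monotonicity of $\det$ on positive-semidefinite matrices together with the scaling identity $\det[(1+q)^2 M] = (1+q)^{2d}\det M$ then gives the claim after taking square roots.

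For the second inequality the matrix is square of size $d+p$, so $\Vol = |\det|$, and I would apply the block-determinant (Schur complement) formula with the invertible block $-I$:
\[
\det\begin{pmatrix} I+Y & X^T \\ X & -I \end{pmatrix} = \det(-I)\,\det\!\left[(I+Y) + X^T X\right] = (-1)^p \det\!\left[(I + X^T X) + Y\right],
\]
and similarly $\det\begin{pmatrix} I & X^T \\ X & -I \end{pmatrix} = (-1)^p \det[I + X^T X]$. Writing $M := I + X^T X \succeq I$, it then remains to prove $\det[M + Y] \geq (1-q)^d \det M$ (which incidentally shows $M+Y\succ 0$, so the absolute value is harmless). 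Since $Y\succeq -qI$ we have $M + Y \succeq M - qI \succeq (1-q)I \succ 0$, hence $\det[M+Y]\geq\det[M-qI]$ by determinant monotonicity, and since the eigenvalues $\mu_i$ of $M$ satisfy $\mu_i\geq 1$, the bound $\mu_i - q \geq (1-q)\mu_i$ gives $\det[M-qI]=\prod_i(\mu_i-q)\geq(1-q)^d\prod_i\mu_i=(1-q)^d\det M$.

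I expect the only genuinely delicate point to be the elementary eigenvalue comparison $\mu_i - q \geq (1-q)\mu_i$ in the second part: it is equivalent to $q(\mu_i-1)\geq 0$ and therefore hinges on $M\succeq I$, that is, on $X^TX$ being positive-semidefinite. Everything else is bookkeeping with the Gram-determinant identity, the Schur complement, and monotonicity of $\det$ on the positive-semidefinite cone.
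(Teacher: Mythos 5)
Your proof is correct, but it takes a genuinely different route from the paper's. For the first inequality the paper decomposes $\begin{pmatrix}I+Y\\X\end{pmatrix}=A+B$ with $A=\begin{pmatrix}I\\X\end{pmatrix}$, $B=\begin{pmatrix}Y\\0\end{pmatrix}$, expands $\Vol(A+B)$ by submultilinearity over all subsets $T\subseteq\{1,\ldots,d\}$ of columns, bounds each mixed term by $q^{\vert T\vert}\Vol(A)$ (using $\Vert y_i\Vert\leq q$ and the fact that $\Vol(A_{T^c})\leq\Vol(A)$), and sums the binomial series to $(1+q)^d\Vol(A)$; for the second it splits $Y=F-G$ into its positive and negative parts, applies the matrix determinant lemma to the resulting rank-$d$ update, and uses orthogonality of the columns of $\begin{pmatrix}I\\X\end{pmatrix}$ and $\begin{pmatrix}X^T\\-I\end{pmatrix}$ to show the big block matrix is norm-expanding, so the correction factor is at least $(1-q)^d$. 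You instead pass directly to Gram determinants and a Schur complement on the block $-I$, reducing both claims to comparisons of $\det\bigl[(I+Y)^2+X^TX\bigr]$ and $\det\bigl[M+Y\bigr]$ with scalings of $\det M$, $M=I+X^TX$, closed off by monotonicity of the determinant in the positive-semidefinite order. Your route is shorter and makes the role of each hypothesis explicit --- in fact it reveals that symmetry of $Y$ is dispensable in the first inequality, since $(I+Y)^T(I+Y)\leq(1+q)^2I$ in the semidefinite order for any $Y$ with $\Vert Y\Vert\leq q$ --- while the paper's subset expansion buys a slightly more general first bound (it needs only the column-norm condition $\max_i\Vert y_i\Vert\leq q$, which is weaker than $\Vert Y\Vert\leq q$) and its determinant-lemma argument avoids computing the Schur complement. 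All your individual steps check out: the Gram identity $\Vol(A)^2=\det(A^TA)$ agrees with the paper's parallelotope definition of $\Vol$, the factorization with the invertible block $-I$ gives $(-1)^p\det(I+Y+X^TX)$ with the sign rendered harmless by $M+Y\geq(1-q)I>0$, and the eigenvalue comparison $\mu_i-q\geq(1-q)\mu_i$ is, as you correctly isolate, exactly equivalent to $\mu_i\geq 1$, i.e.\ to positive semidefiniteness of $X^TX$.
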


\noindent This proof (as well as the proofs of our other supporting technical contributions) is given in the Appendix.
Finally, let us recall several important geometric consequences involving the reach: 

\begin{proposition}
\label{prop:reach}
The following holds:
\begin{enumerate}
\item[i.]
For all $x,y\in \m M$ such that $\|x-y\|\leq \tau/2$, we have 
$$
d_\m M(x,y)\leq \tau-\tau\sqrt{1-2\frac{\|x-y\|}{\tau}}\leq 2\| x-y\|.
$$
\item[ii.] Let $\gamma(t):[0,1]\mapsto \m M$ be the arclength-parameterized geodesic. 
Then $\|\gamma^{\prime\prime}(t)\|\leq \frac{1}{\tau}$ for all $t$. 
\item[iii.] Let $\phi$ be the angle between $T_x\m M$ and $T_y \m M$, in other words,  
\[
\cos(\phi):=\min_{u\in T_x \m M, \|u\|=1}\max_{v\in T_y\m M,\|v\|=1}\left| \dotp{u}{v}\right|.
\] 
If $\Vert x-y\Vert\leq \frac{\tau}{2}$, then 
$
\cos(\phi)\geq \sqrt{1-2\frac{\Vert x-y\Vert}{\tau}}.
$
\item[iv.] If $x$ is such that $\Vert x-y\Vert<\tau/2$, then $x$ is a regular point of $\proj{}_{y+T_y\m M}:\m B(y,\tau/2)\cap \m M\rightarrow y+T_y\m M$ (in other words, the Jacobian of $\proj{}_{y+T_y\m M}$ at $x$ is nonsingular).
\item[v.] Let $y\in \m M$, $r<\tau$ and $A=\m M\cap B(y,r)$. Then
\[
B_d(y,r\cos(\theta))\subseteq \proj{}_{y+T_y \m M}(A),
\]
where $\theta=\arcsin\left(\frac{r}{2\tau}\r)$.
\end{enumerate}
\end{proposition}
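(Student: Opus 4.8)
Everything rests on the single most elementary consequence of the reach, the \emph{rolling-ball condition}: if ${\rm reach}(\m M)=\tau$, then for every $p\in\m M$ and every unit normal $\nu\in T_p^\perp\m M$ the open ball $B(p+\tau\nu,\tau)$ is disjoint from $\m M$. Expanding $\|z-(p+\tau\nu)\|^2\ge\tau^2$ for $z\in\m M$ yields the two-sided \emph{chord bound} $|\dotp{z-p}{\nu}|\le\|z-p\|^2/(2\tau)$, which I will use repeatedly. I would establish (ii) first, since it powers (i) and (iii): fix $t_0$, set $\nu=\gamma''(t_0)/\|\gamma''(t_0)\|$ (a unit normal, because a geodesic has $\gamma''\perp T\m M$), and apply the second-derivative test to $f(t)=\|\gamma(t)-(\gamma(t_0)+\tau\nu)\|^2$, which attains a global minimum at $t_0$; then $f''(t_0)\ge 0$ reduces to $2-2\tau\|\gamma''(t_0)\|\ge 0$, i.e. $\|\gamma''(t_0)\|\le 1/\tau$. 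Polarizing this through each fixed normal direction upgrades it to the full second-fundamental-form bound $\|\mathrm{II}\|\le 1/\tau$, which I will need for (iii).

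For (i) I would use only (ii). Writing the unit-speed geodesic $\gamma$ from $x$ to $y$ of length $L=d_{\m M}(x,y)$, the estimate $|\tfrac{d}{ds}\dotp{\gamma'(s)}{\gamma'(0)}|\le\|\gamma''(s)\|\le 1/\tau$ gives $\dotp{\gamma'(s)}{\gamma'(0)}\ge 1-s/\tau$, so integrating, $\|x-y\|\ge\dotp{y-x}{\gamma'(0)}=\int_0^L\dotp{\gamma'(s)}{\gamma'(0)}\,ds\ge L-L^2/(2\tau)$. Solving the quadratic inequality $L^2-2\tau L+2\tau\|x-y\|\ge 0$ on the branch where $L$ is small gives $L\le\tau-\tau\sqrt{1-2\|x-y\|/\tau}$ (valid since $\|x-y\|\le\tau/2$), and the elementary inequality $1-\sqrt{1-u}\le u$ on $[0,1]$ supplies the second bound $\le 2\|x-y\|$.

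Part (iii) is the main obstacle, and I would treat it as the crux. The cleanest route represents $\m M$ near $y$ as a graph $w\mapsto(w,F(w))$ over $T_y\m M$ with $F(0)=0$, $DF(0)=0$, and $\|D^2F\|\le 1/\tau$ inherited from the second-fundamental-form/chord bounds; then $T_x\m M$ is the column space of $\begin{pmatrix}I\\DF(w_x)\end{pmatrix}$ with $\|DF(w_x)\|\lesssim\|x-y\|/\tau$, and the smallest principal cosine between this space and $T_y\m M={\rm span}\begin{pmatrix}I\\0\end{pmatrix}$ equals $(1+\|DF\|^2)^{-1/2}$, which one checks is $\ge\sqrt{1-2\|x-y\|/\tau}$. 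The delicate points are justifying the graph representation on a ball of the required radius (this is where $\|x-y\|\le\tau/2$ enters) and bookkeeping the constants so the coefficient matches the stated $\sqrt{1-2\|x-y\|/\tau}$; a more self-contained alternative bounds $\|\proj_{T_y^\perp\m M}(u)\|$ for a unit $u\in T_x\m M$ directly, by applying the chord bound at $y$ along a geodesic leaving $x$ in direction $u$ and Taylor-expanding, but matching the exact constant there is finicky.

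Given (iii), parts (iv) and (v) are short. For (iv), the differential of $\proj_{y+T_y\m M}$ restricted to $T_x\m M$ is exactly $\proj_{T_y\m M}\big|_{T_x\m M}:T_x\m M\to T_y\m M$, a map between $d$-dimensional spaces that is injective (hence nonsingular) precisely when no unit tangent vector at $x$ is orthogonal to $T_y\m M$, i.e. when $\cos\phi>0$; since $\|x-y\|<\tau/2$ forces $\cos\phi\ge\sqrt{1-2\|x-y\|/\tau}>0$ by (iii), $x$ is regular. For (v), the chord bound at $y$ gives, for each boundary point $z\in\m M\cap\partial B(y,r)$, that $\sin(\angle(z-y,T_y\m M))\le\|z-y\|/(2\tau)=\sin\theta$, hence $\|\proj_{T_y\m M}(z-y)\|\ge r\cos\theta$; thus $\partial A$ projects outside $B_d(y,r\cos\theta)$ while $y$ projects to its center. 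Since $A=\m M\cap B(y,r)$ is a topological $d$-disk (a consequence of $r<\tau$) and $\proj_{y+T_y\m M}\big|_A$ is a local diffeomorphism near $y$ by (iv) with local degree $\pm1$, the Brouwer degree over every point of $B_d(y,r\cos\theta)$ is nonzero, so the projection covers the whole ball.
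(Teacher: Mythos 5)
First, a note on the benchmark: the paper contains no proof of this proposition at all --- it is imported wholesale from \citep{Niyogi:homology} (Propositions 6.1 and 6.3, Lemmas 5.3 and 5.4) --- so what matters is whether your reconstruction is actually complete. Your part (ii) is correct and is precisely the second-derivative/rolling-ball argument of Proposition 6.1 there, and your reduction of (iv) to (iii) is sound. The gaps are in (i), (iii) and (v). In (i), the words ``on the branch where $L$ is small'' hide a real step: the right-hand side of $\|x-y\|\geq L-L^2/(2\tau)$ decreases for $L>\tau$, so the quadratic only tells you $L\notin\bigl(\tau-\tau\sqrt{1-2\|x-y\|/\tau},\ \tau+\tau\sqrt{1-2\|x-y\|/\tau}\bigr)$, and long minimizing geodesics between chordally close points must be excluded by a separate argument. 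One fix: the chord $[x,y]$ lies in $\m M_{\tau/4}$, where the nearest-point projection onto $\m M$ is Lipschitz with constant $\tau/(\tau-\tau/4)=4/3$ (Federer), so its projection is a path on $\m M$ of length at most $\tfrac{4}{3}\|x-y\|\leq \tfrac{2\tau}{3}<\tau$, which forces the small branch. In (iii) --- which you rightly call the crux --- the plan does not close as written, and you concede as much. Two concrete problems: justifying the graph representation at scale $\tau/2$ is exactly the content of Lemma \ref{lem:inject}, which this paper can prove only for $r<\tau/8$ and whose proof \emph{uses part (iii)}, so your route risks circularity; and the honest, self-consistent Jacobian bound is of the bootstrap form $2\eps/(\tau-2\eps)$ (cf.\ Proposition \ref{prop:diffBounds}), not $\eps/\tau$, with which $(1+\|DF\|^2)^{-1/2}\geq\sqrt{1-2\|x-y\|/\tau}$ holds only for $\|x-y\|\lesssim\tau/4$, not up to $\tau/2$. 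The clean closing move uses exactly your (i) and (ii): parallel-transport a tangent frame along the minimizing geodesic; for any unit $u\in T_x\m M$ the transported field $u(t)$ satisfies $\|\dot u(t)\|=\|\mathrm{II}(\gamma',u(t))\|\leq 1/\tau$ by your polarized second-fundamental-form bound, so its transport $v\in T_y\m M$ obeys $\dotp{u}{v}\geq 1-L^2/(2\tau^2)$; with $w:=\sqrt{1-2\|x-y\|/\tau}$ and $L\leq\tau(1-w)$ from (i), this gives $\cos\phi\geq 1-(1-w)^2/2\geq w$, since the difference equals $(1-w^2)/2\geq 0$.

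In (v), your boundary estimate is correct, but the topological conclusion leans on two unproven claims: that $\m M\cap B(y,r)$ is a topological $d$-disk for every $r<\tau$ (not obvious, and nothing available here provides it), and that a local degree $\pm 1$ near $y$ forces nonzero degree over the whole ball --- it does not, since the degree over a value sums the contributions of \emph{all} preimages, which can cancel (or pair up mod $2$). The standard argument, essentially that of Lemma 5.3 in \citep{Niyogi:homology}, needs neither: by your estimate, any $z\in\m M$ with $\|z-y\|=r$ has $\|\proj{}_{T_y\m M}(z-y)\|\geq r\cos\theta$, so the image of $A$ intersected with the open ball $B_d(y,r\cos\theta)$ is relatively closed in that ball (limit points of preimages cannot escape to the sphere $\|z-y\|=r$, by compactness and the boundary estimate); it is open by regularity as in your (iv); it contains $y$; and the ball is connected, so the image covers it. (For $r\in[\tau/2,\tau)$ one must also extend the regularity statement beyond the radius $\tau/2$ appearing in (iv) --- a point left implicit both by you and, arguably, by the cited source.)
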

\begin{proof}
Part {\em i.} is the statement of Proposition 6.3  and part {\em ii.} - of Proposition 6.1 in \citep{Niyogi:homology}. Part {\it iii.} is demonstrated in Lemma 5.4 of the same paper, and this lemma coincides with {\it iv.} Part {\it v.} is proven in Lemma 5.3 of \citep{Niyogi:homology}. 
\end{proof}


\section{Proofs of the main results}
\label{sec:proofs}

The rest of the paper is devoted to the proofs of our main results. 

\subsection{Overview of the proofs}

We begin by providing an overview of the main steps of the proofs to aid comprehension. 
The proof of Theorem \ref{thm:FSB} begins by invoking the bias-variance decomposition:
\[
\Vert x - \widehat{P}_j(x)\Vert^2\leq 2\Vert x-P_j(x)\Vert^2 + 2 \Vert P_j(x)- \widehat{P}_j(x)\Vert^2.
\]
Remark \ref{rem:1}, part \textit{iii.} and the decomposition
\[
\mathbb{E}\Vert X-P_j(X)\Vert^2 = \sum_{k=1}^{N(j)} \Pi(C_{j,k})\mathbb{E}_{j,k}\Vert X-P_j(X)\Vert^2
\]
gives us the first term in the bound of Theorem \ref{thm:FSB}. Note that this contribution is deterministic.

The next step in the proof is to bound the stochastic error $\mathbb{E}\Vert P_j(x)- \widehat{P}_j(x)\Vert^2$ with high probability. 
We start with the bound
\begin{align}
\label{eq:initbnd}
\|P_j(x)-\widehat P_j(x)\|&=\|c_{j,k}-\widehat c_{j,k}+\proj{}_{V_{j,k}}(x-c_{j,k})-\proj{}_{\widehat V_{j,k}}(x-c_{j,k}+c_{j,k}-\widehat c_{j,k})\| \\
&\leq
2\|c_{j,k}-\widehat c_{j,k}\|+\|\proj{}_{V_{j,k}}-\proj{}_{\widehat V_{j,k}}\| \cdot \|x-c_{j,k}\|.
\end{align}
for $x\in C_{j,k}$. We then use concentration of measure results (matrix Bernstein-type inequality) to bound the terms
\[
\|c_{j,k}-\widehat c_{j,k}\|\text{ and }\l\|\widehat \Sigma_{j,k}-\Sigma_{j,k}\r\|
\]
with high probability. The latter bound and Assumption {\bf(A3)} allows us to invoke Theorem \ref{thm:daviskahan} to obtain a bound of the form
\[
\|\proj{}_{V_{j,k}}-\proj{}_{\widehat V_{j,k}}\|\leq C \l\|\widehat \Sigma_{j,k}-\Sigma_{j,k}\r\|.
\]
Finally, the term $\|x-c_{j,k}\|$ is controlled by Assumption {\bf(A2)}.

The proof of Theorem \ref{thm:MFD} is primarily supported by a volume comparison theorem that allows for the cancellation of the ``noisy" terms that would imply dependency on $D$. That is, supposing that $\text{Proj}_\mathcal{M}:\mathcal{M}_\sigma\rightarrow\mathcal{M}$ is the projection from the $\sigma$-tubular neighborhood onto the underlying manifold with reach $\tau$, if $U\subset\mathcal{M}$ is $\vol$-measurable with $\vol(U)>0$, we have that
\[
\left(1-\frac{\sigma}{\tau}\right)^d\leq\frac{\Vol(\text{Proj}_\mathcal{M}^{-1}(U))}{\vol(U)\Vol(B_{D-d}(0,\sigma))}\leq \left(1+\frac{\sigma}{\tau}\right)^d.
\]
This is encapsulated in Lemma \ref{lem:volUB}. This allows us to relate probabilities on the tubular neighborhood with probabilities on the manifold itself, which only involve $d$-dimensional volumes.

The first thing that this allows us to do is to ensure that a sufficiently large sample from $\mathcal{U}_{\mathcal{M}_\sigma}$, $\{X_i\}_{i=1}^N$, has that $\{\text{Proj}_\mathcal{M}(X_i)\}_{i=1}^N$ is an $\varepsilon$-net for $\mathcal{M}$. Running the cover tree algorithm at the appropriate scale and invoking the cover tree properties at this scale yields the constant for Assumption {\bf(A2)}. Cover tree properties also ensure that each partition element contains a large enough portion of the tubular neighborhood, which we then relate to a portions of the manifold whose volume is comparable to $d$-dimensional Euclidean volumes. This approach provides the constant for Assumption {\bf(A1)}. Finally, the constants from Assumption {\bf(A3)} and {\bf(A4)} are obtained from local moment estimates based upon these volume bounds.

Now, the volume comparison bounds themselves are proven by considering coordinate systems that locally invert orthogonal projections onto tangent spaces. The fact that the manifold has reach $\tau$ imposes bounds on the Jacobians and second-order terms for these local inversions. These bounds are ultimately used to bound volume distortions, and lead to the volume comparison result above.

\subsection{Proof of Theorem \ref{thm:FSB}}
				
Assumption {\bf(A3)} above controls the $L_2(\Pi)$ approximation error of $x\in M$ by $P_j(x)$ (see Remark \ref{rem:1}, part \textit{iii.}), hence we will concentrate on the stochastic error $\|\widehat P_j(x)-P_j(x)\|$. 
To this end, we will need to estimate $\|c_{j,k}-\widehat c_{j,k}\|$ and $\|\proj{}_{V_{j,k}}-\proj{}_{\widehat V_{j,k}}\|, \ k=1\ldots N(j)$. 

One of the main tools required to obtain this bound is the noncommutative Bernstein's inequality. 

\begin{theorem}\citep[Theorem 2.1]{Minsker2013On-Some-Extensi00}
\label{th:bernstein}
Let $Z_1,\ldots, Z_n\in \mb R^{D\times D}$ be a sequence of independent symmetric random matrices such that $\mb EZ_i=0$ and 
$\|Z_i\|\leq U$ a.s., $1\leq i\leq n$. 
Let  
\[
\sigma^2:=\l\|\sum\limits_{i=1}^n \mb E Z_i^2\r\|.
\] 
Then for any $t\geq 1$
\begin{align}
\label{eq:bernstein1}
\l\|\sum_{i=1}^n Z_i\r\|\leq 2\max\l(\sigma\sqrt{t+\log(\bar D)},U(t+\log(\bar D))\r)
\end{align}
with probability $\geq 1-e^{-t}$, 
where $\bar D:=4\frac{\tr\left(\sum\limits_{i=1}^n \mb E Z_i^2\right)}{\sigma^2}$.
\end{theorem}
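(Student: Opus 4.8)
The plan is to establish this concentration bound via the matrix Laplace transform (Chernoff) method, which reduces the tail estimate to a bound on a trace exponential, and then to sharpen the resulting dimensional factor from the ambient $D$ to the intrinsic quantity $\bar D = 4\,\tr(\sum_i \mb E Z_i^2)/\sigma^2$. I would write $S := \sum_{i=1}^n Z_i$ and control the largest eigenvalue $\lambda_{\max}(S)$; the bound on $\|S\|$ then follows by applying the same argument to $-S$ and taking a union bound over the two tails. For a parameter $\theta > 0$, Markov's inequality applied to the monotone map $A \mapsto \tr\, e^{\theta A}$ gives
\[
\Prob{\lambda_{\max}(S) \geq s} \leq e^{-\theta s}\, \mb E\, \tr\, e^{\theta S},
\]
so the crux is to bound $\mb E\, \tr\, e^{\theta S}$.

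Next I would invoke subadditivity of the matrix cumulant generating function (a consequence of Lieb's concavity theorem) to peel off the independent summands,
\[
\mb E\, \tr\, e^{\theta S} \leq \tr\, \exp\!\left(\sum_{i=1}^n \log \mb E\, e^{\theta Z_i}\right),
\]
and control each factor by a standard Bernstein-type operator inequality: since $\mb E Z_i = 0$ and $\|Z_i\| \leq U$, one has $\log \mb E\, e^{\theta Z_i} \preceq g(\theta)\, \mb E Z_i^2$ in the semidefinite order, where $g(\theta) = \frac{\theta^2/2}{1 - \theta U/3}$ for $0 < \theta < 3/U$. Writing $V := \sum_i \mb E Z_i^2$ (so $\sigma^2 = \|V\|$) and using monotonicity of the trace exponential, this yields
\[
\mb E\, \tr\, e^{\theta S} \leq \tr\, \exp\!\big(g(\theta)\, V\big).
\]

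The main obstacle, and the step that produces the intrinsic dimension rather than $D$, is bounding $\tr\exp(g(\theta)V)$ without paying a factor of $D$. The naive estimate $\tr\exp(g(\theta)V) \leq D\, e^{g(\theta)\|V\|}$ would reintroduce ambient-dimension dependence. Instead I would exploit the effective rank: since $g(\theta)V$ is positive semidefinite with eigenvalues in $[0, g(\theta)\|V\|]$, convexity of $x \mapsto e^x - 1$ gives $e^{\lambda} - 1 \leq \lambda\,\frac{e^{a}-1}{a}$ for $0 \leq \lambda \leq a := g(\theta)\|V\|$; summing over the eigenvalues of $g(\theta)V$ produces
\[
\tr\big(e^{g(\theta)V} - I\big) \leq \frac{\tr(g(\theta)V)}{g(\theta)\|V\|}\big(e^{g(\theta)\|V\|} - 1\big) = \frac{\tr V}{\|V\|}\big(e^{g(\theta)\|V\|} - 1\big),
\]
replacing $D$ by $\tr V/\|V\|$, the source of $\bar D$. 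Turning this trace-of-difference estimate into a tail bound requires running the Laplace argument against the functional $\tr(e^{\theta S} - I)$ rather than $\tr\, e^{\theta S}$, noting that on the event $\{\lambda_{\max}(S) \geq s\}$ one still controls this functional from below; care is needed here because $S$ is not positive semidefinite, so I would isolate the contribution of the top eigenvalue and absorb the remaining eigenvalues into an error that the final constants accommodate.

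Finally, I would substitute the effective-rank bound, optimize the free parameter $\theta$ over $(0, 3/U)$, and split into the two regimes according to whether the sub-Gaussian term $\sigma\sqrt{t + \log \bar D}$ or the sub-exponential term $U(t + \log \bar D)$ dominates; this balancing is precisely what yields the $\max(\cdots)$ form and the factor $2$. Collecting the probabilities from the $\lambda_{\max}(S)$ and $\lambda_{\max}(-S)$ tails, and tracking the numerical constants through the optimization, gives the stated inequality with failure probability $e^{-t}$.
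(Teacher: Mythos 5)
Your architecture---Chernoff via the trace exponential, Lieb's theorem to peel off the independent summands, the Bernstein moment bound $g(\theta)=\frac{\theta^2/2}{1-\theta U/3}$, and the convexity trick converting $\tr\l(e^{g(\theta)V}-I\r)$ into $\frac{\tr V}{\|V\|}\l(e^{g(\theta)\sigma^2}-1\r)$---is exactly the skeleton of the cited argument (note the paper does not reprove this statement; it quotes Theorem 2.1 of Minsker's paper, whose proof is the intrinsic-dimension refinement of the matrix Laplace method). However, the one step you flag and defer, namely handling the fact that $S$ is not positive semidefinite when running the argument against $\tr\l(e^{\theta S}-I\r)$, is precisely where your proposal breaks, and the patch you suggest does not work. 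On the event $\{\lambda_{\max}(S)\geq s\}$, each negative eigenvalue of $\theta S$ contributes a term $e^{\theta\lambda_i}-1\in(-1,0)$, so the best pointwise lower bound available is $\tr\l(e^{\theta S}-I\r)\geq \l(e^{\theta s}-1\r)-(D-1)$: the ``error from the remaining eigenvalues'' is of size $D$, not an $O(1)$ quantity absorbable into constants. Worse, since the functional $\tr\l(e^{\theta S}-I\r)$ is not nonnegative, Markov's inequality requires recentering by $+D$, which collapses the scheme back to $\Prob{\lambda_{\max}(S)\geq s}\leq e^{-\theta s}\,\mb E\tr e^{\theta S}$ and reinstates the ambient factor $D$ rather than $\bar D$.

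The missing idea is to run the Markov step against $\psi(a)=e^{a}-a-1$ instead of $e^{a}-1$. This $\psi$ is nonnegative on all of $\mb R$ and nondecreasing on $[0,\infty)$, so pointwise $\tr\psi(\theta S)\geq \psi\l(\theta\lambda_{\max}(S)\r)\geq\psi(\theta s)$ with no penalty from the negative part of the spectrum; and in expectation the linear correction vanishes because $\mb E S=0$ (a second, crucial use of centering): $\mb E\tr\psi(\theta S)=\mb E\tr e^{\theta S}-D\leq \tr\l(e^{g(\theta)V}-I\r)\leq \frac{\tr V}{\|V\|}\l(e^{g(\theta)\sigma^2}-1\r)$. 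From there your optimization over $\theta\in(0,3/U)$, the regime split, and the two-tail union bound go through. Two bookkeeping points you also left open are then explained: the estimate $1/\psi(u)\leq C e^{-u}$ holds only for $u$ bounded away from $0$, which is where the hypothesis $t\geq 1$ is consumed; and the factor $2$ from the two tails together with the constant in that denominator estimate are what the factor $4$ in the definition $\bar D=4\,\tr\l(\sum_{i}\mb E Z_i^2\r)/\sigma^2$ is there to absorb.
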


\noindent Note that we always have $\bar D\leq 4D$. 
We use this inequality to estimate $\|\widehat \Sigma_{j,k}-\Sigma_{j,k}\|$:
let $\Pi(dx|A)$ be the conditional distribution of $X$ given that $X\in A$, and set $\Pi_{j,k}(dx):=\Pi(dx|C_{j,k})$.
Let $m_{j,k}:=\sum\limits_{i=1}^n I\{X_i\in C_{j,k}\}$ to be the number of samples in $C_{j,k}, \ k=1\ldots N(j)$. 
Let $I\subset\{1,\ldots,n\}$ be such that $|I|=m$.  
Conditionally on the event $A_I:=\{X_i\in C_{j,k}\text{ for } \ i\in I\,, \text{ and } X_i\notin C_{j,k}\text{ for }i\notin I\}$, the random variables $\{X_i,\ i\in I\}$ are independent with distribution $\Pi_{j,k}$. 
Then 
\begin{align}
\label{eq:a60}
\Pr\l(\l\|\widehat \Sigma_{j,k}-\Sigma_{j,k}\r\| \geq s \,|\, m_{j,k}=m\r)&=
\sum\limits_{I\subset \{1,\ldots,n\}, |I|=m}\Pr\l(\l\|\widehat \Sigma_{j,k}-\Sigma_{j,k}\r\| \geq s \,|\, A_I\r)\frac{1}{\binom{n}{m}} \\
&=\nonumber
\Pr\l(\l\|\widehat \Sigma_{j,k}-\Sigma_{j,k}\r\| \geq s \,|\, A_{\{1,\ldots,m\}}\r).
\end{align}
To estimate $\Pr\l(\l\|\widehat \Sigma_{j,k}-\Sigma_{j,k}\r\| \geq s \,|\, A_{\{1,\ldots,m\}}\r)$, we use the following inequality. 
Recall that 
\[
\bar d=4d^2\frac{\theta_2^4}{\theta_3^2},
\]
where $\theta_2,\theta_3$ are the constants in Assumptions {\bf(A2)} and {\bf(A3)}. 
\begin{lemma}
\label{lemma:cov}
Let $X,X_1,\ldots,X_m$ be an i.i.d. sample from $\Pi_{j,k}$. 
Set
\begin{align*}
\widehat c_{j,k}=\frac 1 m \sum_{i=1}^m X_i\quad\text{ and }\quad
\widehat\Sigma_{j,k}:=\frac 1 m \sum_{i=1}^m(X_i-\widehat c_{j,k})(X_i-\widehat c_{j,k})^T.
\end{align*}
Assume that $m\geq t+\log(\bar d\vee 8)$. 
Then with probability $\geq 1-2e^{-t}$,
\[
\l\|\widehat \Sigma_{j,k}-\Sigma_{j,k}\r\| \leq 6 r^2\sqrt{\frac{ t+\log(\bar d\vee 8)}{m}}.
\]
\end{lemma}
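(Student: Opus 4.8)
The plan is to reduce everything to the non-commutative Bernstein inequality (Theorem \ref{th:bernstein}) applied to the centered summands of the empirical covariance, after first peeling off the fluctuation of the empirical mean. Write $c:=c_{j,k}$, $\widehat c:=\widehat c_{j,k}$, $\Sigma:=\Sigma_{j,k}$, $W_i:=(X_i-c)(X_i-c)^T$, and let $r:=\theta_2 2^{-j}$ be the almost-sure bound on $\|X_i-c\|$ furnished by Assumption \textbf{(A2)}. First I would record the algebraic identity
\[
\widehat\Sigma_{j,k}-\Sigma=\frac1m\sum_{i=1}^m (W_i-\Sigma)-(\widehat c-c)(\widehat c-c)^T,
\]
so that $\|\widehat\Sigma_{j,k}-\Sigma\|\le \|S_1\|+\|\widehat c-c\|^2$ with $S_1:=\frac1m\sum_i(W_i-\Sigma)$. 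The second term is lower order; the main work is the spectral-norm bound on $S_1$.

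For $\|S_1\|$ I would apply Theorem \ref{th:bernstein} to $Z_i:=\frac1m(W_i-\Sigma)$, which are independent, symmetric and centered. Since $\|X_i-c\|\le r$ a.s., both $W_i$ and $\Sigma$ are positive semidefinite with operator norm at most $r^2$, giving the uniform bound $\|Z_i\|\le r^2/m=:U$. For the variance proxy, the pointwise inequality $W_i^2=\|X_i-c\|^2 W_i\preceq r^2 W_i$ yields $\mb E[(W_i-\Sigma)^2]=\mb E[W_i^2]-\Sigma^2\preceq r^2\Sigma$, whence $\sigma^2=\frac1m\|\mb E[(W_i-\Sigma)^2]\|\le r^2\|\Sigma\|/m\le r^4/m$. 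Feeding $U$ and $\sigma^2$ into \eqref{eq:bernstein1} and using the hypothesis $m\ge t+\log(\bar d\vee 8)$ to absorb the linear (Bernstein) term into the sub-Gaussian one gives $\|S_1\|\le 2r^2\sqrt{(t+\log\bar D)/m}$ on an event of probability $\ge 1-e^{-t}$, where $\bar D=4\,\tr(\mb E[(W_i-\Sigma)^2])/\|\mb E[(W_i-\Sigma)^2]\|$ is the intrinsic dimension from Theorem \ref{th:bernstein}.

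The crux — and the step where the dimension-free character of the estimate is won — is to show $\bar D\le \bar d\vee 8$, so that $\log\bar D$ may be replaced by $\log(\bar d\vee 8)$. The numerator is immediate: $\tr(W_i^2)=\|X_i-c\|^4\le r^2\|X_i-c\|^2$ gives $\tr(\mb E[(W_i-\Sigma)^2])\le r^2\tr(\Sigma)\le r^4$. The denominator must instead be bounded \emph{below} in terms of $\lambda^{j,k}_d$, and this is where Assumption \textbf{(A3)} enters. Restricting $\mb E[(W_i-\Sigma)^2]$ to the top-$d$ eigenspace $V$ of $\Sigma$ and applying Jensen's inequality to $\mb E\|\proj_V(X-c)\|^4\ge (\tr(\proj_V\Sigma))^2$, I obtain the cross-term lower bound $\|\mb E[(W_i-\Sigma)^2]\|\ge \frac1d\bigl((\tr\proj_V\Sigma)^2-\sum_{l\le d}(\lambda^{j,k}_l)^2\bigr)\ge (d-1)(\lambda^{j,k}_d)^2$. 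Combined with $\tr(\Sigma)\le r^2$ this yields $\bar D\le 4r^4/((d-1)(\lambda^{j,k}_d)^2)$, and since \textbf{(A3)} gives $4r^4/(\lambda^{j,k}_d)^2\le 4d^2\theta_2^4/\theta_3^2=\bar d$, we conclude $\bar D\le \bar d$ for $d\ge 2$. I expect this effective-rank estimate to be the main obstacle: the naive per-direction lower bound cancels under Jensen, and the nearly rank-one regime (relevant for small $d$) must be handled with care, with the $\vee 8$ safeguarding the low-dimensional case.

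Finally, for $\|\widehat c-c\|^2$ I would apply the same Bernstein inequality to the symmetric dilations $\left[\begin{smallmatrix} 0 & X_i-c\\ (X_i-c)^T & 0\end{smallmatrix}\right]$, whose operator norm equals $\|X_i-c\|\le r$ and whose squares have norm $\tr(\Sigma)\le r^2$; this gives $\|\widehat c-c\|\lesssim r\sqrt{(t+\log(\bar d\vee 8))/m}$ with probability $\ge 1-e^{-t}$, so that $\|\widehat c-c\|^2\lesssim r^2(t+\log(\bar d\vee 8))/m\le r^2\sqrt{(t+\log(\bar d\vee 8))/m}$, the last inequality using $m\ge t+\log(\bar d\vee 8)$. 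A union bound over the two events (each of probability at most $e^{-t}$) and collecting the numerical constants — $2$ from $\|S_1\|$ together with the contribution of the mean term — produces the claimed bound $6r^2\sqrt{(t+\log(\bar d\vee 8))/m}$ with probability at least $1-2e^{-t}$.
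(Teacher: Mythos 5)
Your overall route coincides with the paper's: the same decomposition $\widehat\Sigma_{j,k}-\Sigma_{j,k}=\frac1m\sum_i(W_i-\Sigma_{j,k})-(\widehat c-c)(\widehat c-c)^T$ handled by the triangle inequality, the same application of Theorem \ref{th:bernstein} with uniform bound of order $r^2$ and variance proxy $r^2\|\Sigma_{j,k}\|/m\le r^4/m$, the same symmetric-dilation trick for the mean term (whose intrinsic dimension is $2$, hence the $\log 8$), the same absorption of the linear Bernstein term and of the squared mean fluctuation using $m\geq t+\log(\bar d\vee 8)$, and the same $2+4=6$ bookkeeping with a union bound over the two events. The one step where you genuinely diverge is the effective-dimension bound $\bar D\leq \bar d\vee 8$, and there your argument has a real gap.

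You correctly identify this as the crux, but your lower bound $\|\mb E[(W-\Sigma_{j,k})^2]\|\geq (d-1)(\lambda^{j,k}_d)^2$, obtained from the cross-term Jensen argument on the top-$d$ compression, is vacuous at $d=1$, and your assertion that ``the $\vee 8$ safeguards the low-dimensional case'' is not only unproven but can fail for the exact centered ratio you would need. Take $d=1$ and $X-c=\eps\lambda u+\rho\eps' v_I$ with $\eps,\eps'$ independent signs, $I$ uniform on $\{1,\dots,K\}$, and $u,v_1,\dots,v_K$ orthonormal: then $\mb E[(W-\Sigma)^2]=\lambda^2\rho^2\,uu^T+\bigl(\tfrac{\lambda^2\rho^2}{K}+\rho^4\tfrac{K-1}{K^2}\bigr)\sum_i v_iv_i^T$, so for large $K$ one gets $\bar D\to 8+4\rho^2/\lambda^2$; choosing $\rho^2=\lambda^2/2$ (which saturates {\bf(A3)}, with $\theta_2^2 2^{-2j}=\tfrac32\lambda^2$ and $\theta_3 2^{-2j}=\lambda^2$) gives $\bar D\to 10$ while $\bar d\vee 8=9$. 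The paper sidesteps this degeneracy by never centering in this step: it computes the effective dimension from the \emph{uncentered} second moment, using the operator Jensen inequality $\mb E[W^2]\succeq(\mb E W)^2=\Sigma_{j,k}^2$, hence $\|\mb E W^2\|\geq \lambda_1^2\geq(\lambda^{j,k}_d)^2$ with no $(d-1)$ factor, while $\tr(\mb E W^2)=\mb E\|X-c\|^4\leq r^4$ gives $\bar D\leq \bar d$ uniformly in $d\geq 1$ (this implicitly relies on the majorant-robust form of the intrinsic-dimension Bernstein inequality, in which $\sigma^2$ and $\bar D$ may be computed from any semidefinite upper bound for the variance, e.g.\ $r^2\Sigma_{j,k}/m$). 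Since the lemma must hold for all $1\leq d\leq D$ --- Corollary \ref{cor:noiseless} explicitly uses $d\in\{1,2\}$ --- your proof as written establishes the claim only for $d\geq 2$, and the $d=1$ case needs the paper's uncentered (or majorant) argument rather than the claimed $\vee 8$ safeguard.
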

\begin{proof}
 We want to estimate 
\begin{align}
\nonumber
\l\|\widehat \Sigma_{j,k}-\Sigma_{j,k}\r\|
&=\l\|\frac 1 m \sum_{i=1}^m (X_i-c_{j,k})(X_i-c_{j,k})^T-\Sigma_{j,k} + (c_{j,k}-\widehat c_{j,k})(c_{j,k}-\widehat c_{j,k})^T \r\|\\ 
&\leq\l\|\frac 1 m \sum_{i=1}^m (X_i-c_{j,k})(X_i-c_{j,k})^T-\Sigma_{j,k}\r\|+\l\|(c_{j,k}-\widehat c_{j,k})(c_{j,k}-\widehat c_{j,k})^T \r\|.
\label{e:empcovexp}
\end{align}

Set $r:=\theta_2\cdot 2^{-j}$. 
Recall that $\|x-c_{j,k}\|\leq r$ for all $x,y\in C_{i,j}$ by assumption {\bf(A2)}. 
It implies that
\begin{enumerate}
\item 
for all $1\leq i\leq m$, $\|(X_i-c_{j,k})(X_i-c_{j,k})^T\|\leq r^2$ almost surely, 
\item 
$\Big\|\mb E\Big[(X_i-c_{j,k})(X_i-c_{j,k})^T\Big]^2\Big\|=\Big\|\mb E\|X_i-c_{j,k}\|^2(X_i-c_{j,k})(X_i-c_{j,k})^T\Big\| 
\leq r^2 \|\Sigma_{j,k}\|$.  
\end{enumerate}

Therefore, by Theorem \ref{th:bernstein} applied to $Z_i:=\frac{1}{m}(X_i-c_{j,k})(X_i-c_{j,k})^T,  \ i=1\ldots m$,
\begin{align*}
\l\|\frac 1 m \sum_{i=1}^m (X_i-c_{j,k})(X_i-c_{j,k})^T-\Sigma_{j,k}\r\|
&\leq 
2\l(r\sqrt{\frac{(t+\log(\bar d))\|\Sigma_{j,k}\|}{m} }\vee r^2\frac{t+\log(\bar d)}{m} \r) \\
& =
2 r^2 \sqrt{\frac{(t+\log(\bar d))}{m}}\l(\sqrt{\frac{t+\log(\bar d)}{m}}\vee \sqrt{\l\|\frac{\Sigma_{j,k}}{r^2}\r\|}\r)
\end{align*}
with probability $\geq 1-e^{-t}$. 
Note that $\|\Sigma_{j,k}\|\leq \tr(\Sigma_{j,k})\leq r^2$. 
Moreover, 
\begin{align*}
&
\bar D=4 \frac{\tr (\mb E Z_1^2)}{\|\mb EZ_1^2\| }\leq 4\frac{\mb E(\tr Z_1)^2}{\l(\lambda_{d}^{j,k}\r)^2}\leq 4d^2\frac{r^4}{\theta_3^2 2^{-4j}}=
4d^2 \frac{\theta_2^4}{\theta_3^2}=\bar d
\end{align*}
by assumption {\bf(A3)} and the definition of $r$. 
Since $\frac{t+\log(\bar d)}{m}\leq 1$ by assumption,
\[
\l\| \frac 1 m \sum_{i=1}^m (X_i-c_{j,k})(X_i-c_{j,k})-\Sigma_{j,k} \r\|\leq 2r^2\sqrt{\frac{t+\log(\bar d)}{m}}.
\]
For the second term in \eqref{e:empcovexp}, note that $\l\|(c_{j,k}-\widehat c_{j,k})(c_{j,k}-\widehat c_{j,k})^T \r\|=\|c_{j,k}-\widehat c_{j,k}\|^2$. 
We apply Theorem \ref{th:bernstein} to the symmetric matrices
\[
G_i:=\begin{pmatrix}
0 & (X_i-c_{j,k})^T \\
X_i-c_{j,k} & 0
\end{pmatrix}.
\]
Noting that $\|G_i\|=\|X_i-c_{j,k}\|\leq r$ almost surely,
\[
\|\mb E G_i^2\|=\mb E\|X_i-c_{j,k}\|^2=\tr(\Sigma_{j,k})\leq r^2,
\]
and $\frac{\tr(\mb EG_i^2)}{\|\mb EG_i^2\|}=2$, 
we get that for all $t$ such that $t+\log 8\leq m$, with probability $\geq 1-e^{-t}$
\begin{align}
\label{eq:c10}
\|\widehat c_{j,k}-c_{j,k}\|\leq 2\left[r\sqrt{\frac{(t+\log 8)}{m}}\vee r\frac{t+\log 8}{m}\right]\leq 2 r\sqrt{\frac{t+\log 8}{m}},
\end{align}
hence with the same probability 
\begin{align*}
&
\l\|(c_{j,k}-\widehat c_{j,k})(c_{j,k}-\widehat c_{j,k})^T\r\|\leq 4 r^2 \frac{t+\log 8}{m},
\end{align*}	
and the claim follows.
\end{proof}
Given the previous result, we can estimate the angle between the eigenspaces of $\widehat \Sigma_{j,k}$ and $\Sigma_{j,k}$: 
\begin{theorem}\citep{davis1970rotation}, or \citep[Theorem 3]{Zwald2006On-the-Converge00}.\\
\label{thm:daviskahan}
Let $\delta_d=\delta_d(\Sigma_{j,k}):=\frac 1 2 (\lambda^{j,k}_{d}-\lambda^{j,k}_{d+1})$. 
If $\|\widehat\Sigma_{j,k}-\Sigma_{j,k}\|<\delta_d/2$, then 
\begin{align*}
&
\Big\|\proj{}_{V_{j,k}}-\proj{}_{\widehat V_{j,k}}\Big\|\leq \frac{\|\widehat\Sigma_{j,k}-\Sigma_{j,k}\|}{\delta_d},\\
\end{align*}
\end{theorem}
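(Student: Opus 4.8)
The plan is to recognize this as a version of the Davis--Kahan $\sin\Theta$ theorem and to prove it by reducing the projector difference to the solution of a Sylvester equation whose operator norm is governed by the spectral gap. Throughout, abbreviate $P := \proj_{V_{j,k}}$, $Q := \proj_{\widehat V_{j,k}}$, and $E := \widehat\Sigma_{j,k} - \Sigma_{j,k}$, so that $\|E\| < \delta_d/2$ by hypothesis. The first step is to check that the perturbation cannot move eigenvalues across the gap: by Weyl's inequality $|\widehat\lambda^{j,k}_\ell - \lambda^{j,k}_\ell|\le\|E\|<\delta_d/2$ for every $\ell$, so the top $d$ eigenvalues of $\widehat\Sigma_{j,k}$ all lie above the midpoint $m^\ast:=\tfrac12(\lambda^{j,k}_d+\lambda^{j,k}_{d+1})$ while the remaining $D-d$ lie below it. In particular $\widehat V_{j,k}$ is genuinely the span of the top $d$ eigenvectors of $\widehat\Sigma_{j,k}$, the dimensions match ($\dim V_{j,k}=\dim\widehat V_{j,k}=d$), and $\|P-Q\|<1$. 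For two equidimensional orthogonal projections with $\|P-Q\|<1$ one has $\|P-Q\|=\|P^\perp Q\|=\|\sin\Theta\|$, so it suffices to bound $S:=P^\perp Q$.

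Next I would set up the Sylvester equation. Since $P$ and $Q$ are spectral projectors, each commutes with its own matrix: $\Sigma_{j,k}P=P\Sigma_{j,k}$ and $\widehat\Sigma_{j,k}Q=Q\widehat\Sigma_{j,k}$. Sandwiching $E$ between $P^\perp$ and $Q$ and using the invariance relation $P^\perp\Sigma_{j,k}P=0$ yields
\[
P^\perp E Q = S A - B S, \qquad A := Q\widehat\Sigma_{j,k}Q\big|_{\widehat V_{j,k}}, \quad B := P^\perp \Sigma_{j,k}P^\perp\big|_{V_{j,k}^\perp}.
\]
Here $A$ is symmetric with spectrum $\ge\widehat\lambda^{j,k}_d\ge\lambda^{j,k}_d-\|E\|$, while $B$ is symmetric with spectrum $\le\lambda^{j,k}_{d+1}$, so the two spectra are separated by
\[
\mathrm{sep}(A,B) \ge \lambda^{j,k}_d - \lambda^{j,k}_{d+1} - \|E\| = 2\delta_d - \|E\| > \tfrac32\delta_d > \delta_d.
\]

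The final step invokes the standard fact that for symmetric $A,B$ with disjoint spectra the Sylvester operator $\mathcal{L}(S)=SA-BS$ is invertible with $\|\mathcal{L}^{-1}\|=1/\mathrm{sep}(A,B)$; diagonalizing $A$ and $B$ makes $\mathcal{L}$ act diagonally with entries $\alpha_i-\beta_j$, which pins down this norm exactly. Combining this with $\|P^\perp E Q\|\le\|E\|$ gives $\|S\|\le\|E\|/\mathrm{sep}(A,B)\le\|E\|/\delta_d$, which is the claimed bound.

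I expect the main obstacle to be the bookkeeping around the Sylvester operator: one must justify $\|P-Q\|=\|P^\perp Q\|$ for equidimensional projections (this quietly uses the no-crossing step to guarantee $\|P-Q\|<1$) and verify the separation of $\mathrm{spec}(A)$ and $\mathrm{spec}(B)$ carefully so the denominator is $\delta_d$ rather than a smaller quantity. A more transparent alternative is the resolvent/Riesz-projection route of Zwald and Blanchard: using the resolvent identity one writes
\[
P - Q = \frac{1}{2\pi i}\oint_\Gamma (zI - \Sigma_{j,k})^{-1} E\, (zI - \widehat\Sigma_{j,k})^{-1}\,dz
\]
for a contour $\Gamma$ separating the top $d$ eigenvalues from the rest (legitimate precisely because of the no-crossing step), and bounds the integrand via $\|(zI-\Sigma_{j,k})^{-1}\|=1/\mathrm{dist}(z,\mathrm{spec}(\Sigma_{j,k}))$; in that approach the delicate point is choosing $\Gamma$ so that the contour integral collapses to the factor $1/\delta_d$.
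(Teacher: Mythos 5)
Your proof is correct, but note that the paper itself does not prove this statement: Theorem \ref{thm:daviskahan} is quoted from the literature (it is Theorem 3 of Zwald and Blanchard, with the classical antecedent being the Davis--Kahan $\sin\Theta$ theorem), and the ``proof'' in the paper is the citation itself. Your Sylvester-equation argument is essentially the original Davis--Kahan proof, and your resolvent alternative is precisely the contour-integral route taken by Zwald and Blanchard, so you have in effect reconstructed both cited proofs; the paper's authors simply did not need to. Two small points deserve care. First, the identity $\|P-Q\|=\|P^{\perp}Q\|$ needs only $\dim V_{j,k}=\dim\widehat V_{j,k}$ (equal ranks force the nonzero singular values of $P^{\perp}Q$ and $PQ^{\perp}$, i.e.\ the sines of the principal angles, to coincide), not $\|P-Q\|<1$; what the Weyl no-crossing step really buys you is that $\widehat\lambda^{j,k}_d>\widehat\lambda^{j,k}_{d+1}$, so that $\widehat V_{j,k}$ is an honest spectral subspace and $Q$ commutes with $\widehat\Sigma_{j,k}$ --- a fact your Sylvester derivation uses. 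Second, ``diagonalizing makes $\mathcal{L}$ act diagonally, which pins down this norm exactly'' is glib for the operator norm: entrywise division by $\alpha_i-\beta_j$ immediately gives the Frobenius-norm bound, but the operator-norm bound $\|\mathcal{L}^{-1}\|\le 1/\mathrm{sep}(A,B)$ with constant $1$ is special to the Hermitian case with spectra separated by a point of the real line (as here, where $\mathrm{spec}(B)\le\lambda^{j,k}_{d+1}<\lambda^{j,k}_d-\|E\|\le\mathrm{spec}(A)$); it follows, for instance, from the representation $S=\int_0^\infty e^{tB'}\bigl(SA-BS\bigr)e^{-tA'}\,dt$ after centering both spectra, whereas for merely disjoint spectra in the plane the best constant degrades to $\pi/2$ (Bhatia--Davis--McIntosh). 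With that step made precise, your chain $\|P-Q\|=\|S\|\le\|E\|/\mathrm{sep}(A,B)\le\|E\|/\delta_d$ is valid, and in fact yields the slightly stronger bound $\|E\|/(2\delta_d-\|E\|)$.
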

Since $\delta_d\geq \frac{\theta_3}{2\theta_2^2}\frac{r^2}{d}$ by assumption {\bf(A3)}, the previous result implies that, conditionally on the event $\{m_{j,k}=m\}$, with probability $\geq 1-2e^{-t}$,
\[
\Big\|\proj{}_{V_{j,k}}-\proj{}_{\widehat V_{j,k}}\Big\|\leq 12d\frac{\theta_2^2}{\theta_3}\sqrt{\frac{ t+\log(\bar d\vee 8)}{m}}.
\]
It remains to obtain the unconditional bound. 
Set $n_{j,k}:=n\Pi(C_{j,k})$ and note that $n_{j,k}\geq \theta_1 n2^{-jd}$ by assumption {\bf(A1)}. 
To this end, we have
\begin{align*}
&
\Pr\l(\max_{k=1\ldots N(j)}\Big\|\proj{}_{V_{j,k}}-\proj{}_{\widehat V_{j,k}}\Big\|\geq 12\frac{\theta_2^2}{\theta_3}\sqrt{\frac{(t+\log(\bar d\vee 8))d^2}{n_{j,k}/2}}\r) \\
&\leq
\Pr\l(\max_{k=1\ldots N(j)}\Big\|\proj{}_{V_{j,k}}-\proj{}_{\widehat V_{j,k}}\Big\|\geq 12\frac{\theta_2^2}{\theta_3}\sqrt{\frac{(t+\log(\bar d\vee 8))d^2}{n_{j,k}/2}}\bigg| m_{j,k}\geq n_{j,k}/2, \ k=1\ldots N(j)\r)\\
& +\Pr\l(\bigcup_{k=1}^{N(j)}\{m_{j,k}<n_{j,k}/2\}\r)\leq N(j)e^{-t}+ \sum_{k=1}^{N(j)}\Pr\l(m_{j,k}<n_{j,k}/2\r).
\end{align*}
Recall that $m_{j,k}=\sum\limits_{i=1}^n I\{X_i\in C_{j,k}\}$, hence $\mb Em_{j,k}=n_{j,k}$ and $\var(m_{j,k})\leq n_{j,k}$. 
Bernstein's inequality  \citep[see Lemma 2.2.9 in][]{Vaart1996Weak-convergenc00} implies that
\[
\l|m_{j,k}-n_{j,k}\r|\leq \l(2\sqrt {s n_{j,k}}\vee \frac{4}{3}s\r)
\]
with probability $\geq 1-e^{-s}$. 
Choosing $s=\frac{n_{j,k}}{16}$, we deduce that $\Pr\l(m_{j,k}<n_{j,k}/2\r)\leq e^{-\frac{\theta_1}{16}n2^{-jd}}$, and, since $N(j)\leq \frac{1}{\theta_1}2^{jd}$ by assumption {\bf(A1)}, 
\[
\sum_{k=1}^{N(j)}\Pr\l(m_{j,k}<n_{j,k}/2\r)\leq \frac{1}{\theta_1}2^{jd}e^{-\frac{\theta_1}{16}n2^{-jd}}
\]
and 
\begin{align}
\label{eq:c20}
\Pr\l(\max_{k=1\ldots N(j)}\Big\|\proj{}_{V_{j,k}}-\proj{}_{\widehat V_{j,k}}\Big\|\geq 12\frac{\theta_2^2}{\theta_3}\sqrt{\frac{(t+\log(\bar d\vee 8))d^2}{n_{j,k}/2}}\r)\leq
\frac{2^{jd}}{\theta_1}\left(e^{-t}+e^{-\frac{\theta_1}{16}n2^{-jd}}\right)
\end{align}
A similar argument implies that 
\begin{align}
\label{eq:c30}
\Pr\l(\max_{k=1\ldots N(j)}\|c_{j,k}-\widehat c_{j,k}\|\geq 2 r\sqrt{\frac{t+\log(\bar d\vee 8)}{n_{j,k}/2}}\r)\leq
\frac{2^{jd}}{\theta_1}\left(e^{-t}+e^{-\frac{\theta_1}{16}n2^{-jd}}\right).
\end{align}
We are in position to conclude the proof of Theorem \ref{thm:FSB}. With assumption {\bf(A2)}, (\ref{eq:c20}), and (\ref{eq:c30}), the initial bound (\ref{eq:initbnd}) implies that, with high probability,
\[
\|P_j(x)-\widehat P_j(x)\|\leq 
4\sqrt{2}\frac{\theta_2}{\sqrt{\theta_1}} 2^{-j}\sqrt{\frac{t+\log(\bar d\vee 8)}{n2^{-jd}}}+
12\sqrt{2}\frac{\theta_2^3}{\theta_3\sqrt{\theta_1}}  2^{-j}\sqrt{\frac{(t+\log(\bar d\vee 8))d^2}{n2^{-jd}}}.
\]
Combined with assumption {\bf(A3)} (see Remark \ref{rem:1}, part \textit{iii.}), this yields the result.  

\subsection{Proof of Theorem \ref{thm:MFD}}

Recall that $\m M\hookrightarrow\mb R^D$ is a smooth (or at least $C^2$) compact manifold without boundary, with reach $\tau$, and
equipped with the volume measure $d\Vol_\m M$. 
Our proof is divided into several steps, and each of them is presented in a separate subsection to improve readability. 

\subsubsection{Local inversions of the projection}


In this section, we introduce lemmas which ensure that (for $r<\tau/8$) the projection map $\proj{}_{y+T_y\m M}$ is injective on $B(y,r)\cap\m M$, and hence invertible by part {\it iv.} of Proposition \ref{prop:reach}. 
We also demonstrate that the derivatives of this inverse are bounded in a suitable sense. 
These estimates shall allow us to develop bounds on volumes in $\m M_\sigma$.

We begin by proving a bound on the local deviation of the manifold from a tangent plane.

\begin{lemma}\label{lem:lengths}
Suppose $\eta\in T_y^\perp\m M$ with $\Vert\eta\Vert =1$ and $z\in B(y,r)\cap \m M$, where $r\leq \tau/2$. Then 
$$
\vert \langle \eta, z-y\rangle\vert\leq \frac{2r^2}{\tau}
$$
\end{lemma}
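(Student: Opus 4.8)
The plan is to exploit the fact that the reach $\tau$ controls the extrinsic curvature of $\m M$, reducing the inner product $\dotp{\eta}{z-y}$ to a second-order Taylor estimate along a geodesic. First I would use compactness of $\m M$ to join $y$ and $z$ by a unit-speed minimizing geodesic $\gamma:[0,L]\to\m M$ with $\gamma(0)=y$, $\gamma(L)=z$, and $L=d_\m M(y,z)$. The entire argument then revolves around the scalar function $f(t):=\dotp{\eta}{\gamma(t)-y}$.

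Next I would isolate the two features that force $f$ to be small. Since $f(0)=0$ and $f'(t)=\dotp{\eta}{\gamma'(t)}$, the fact that $\gamma'(0)\in T_y\m M$ while $\eta\in T_y^\perp\m M$ gives $f'(0)=0$; so $f$ has both vanishing value and vanishing derivative at the origin. For the second derivative, $f''(t)=\dotp{\eta}{\gamma''(t)}$, and by Cauchy--Schwarz together with part \emph{ii.} of Proposition \ref{prop:reach} (which bounds the acceleration of an arclength-parameterized geodesic by $1/\tau$) and $\|\eta\|=1$, we obtain $|f''(t)|\le 1/\tau$ for all $t$.

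Then I would apply Taylor's theorem with integral remainder, $f(L)=f(0)+f'(0)L+\int_0^L (L-t)f''(t)\,dt$, in which the first two terms vanish, so that
$$
|\dotp{\eta}{z-y}|=|f(L)|\le \frac{1}{\tau}\int_0^L (L-t)\,dt=\frac{L^2}{2\tau}.
$$
Finally, part \emph{i.} of Proposition \ref{prop:reach} converts geodesic length back into Euclidean distance: since $\|z-y\|\le r\le\tau/2$, it yields $L=d_\m M(y,z)\le 2\|z-y\|\le 2r$, and substituting gives $|\dotp{\eta}{z-y}|\le (2r)^2/(2\tau)=2r^2/\tau$, as claimed.

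I do not anticipate a serious obstacle, as the statement is a standard manifestation of the reach controlling extrinsic curvature, and every analytic input (existence of a minimizing geodesic on the compact manifold, the acceleration bound, and the distance comparison) is already packaged in Proposition \ref{prop:reach}. The only point deserving care is verifying that the hypotheses of both parts \emph{i.} and \emph{ii.} of that proposition hold — both are secured by $r\le\tau/2$ — and confirming that the chosen geodesic is genuinely minimizing so that its length is exactly $d_\m M(y,z)$.
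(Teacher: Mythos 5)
Your proof is correct and takes essentially the same route as the paper: the paper likewise Taylor-expands along the arclength-parameterized geodesic, annihilates the first-order term because $\gamma'(0)\in T_y\m M$ is orthogonal to $\eta$, bounds the integral remainder via $\|\gamma''(t)\|\leq 1/\tau$ (Proposition \ref{prop:reach}, part \emph{ii.}), and finishes with $d_{\m M}(z,y)\leq 2\|z-y\|\leq 2r$ from part \emph{i.}. The only cosmetic difference is that you phrase the expansion through the scalar function $f(t)=\dotp{\eta}{\gamma(t)-y}$, whereas the paper expands the point $z$ itself as a vector identity before pairing with $\eta$.
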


Our next lemma quantitatively establishes the local injectivity of the affine projections onto tangent spaces. 
\footnote{In an independent work, \citet{Eftekhari2013New-analysis-of00} prove a slightly stronger result that holds for $r<\tau/4$. }
\begin{lemma}\label{lem:inject}
Suppose $y\in \m M$ and $r<\tau/8$. Then $\proj{}_{y+T_y\m M}:B(y,r)\cap \m M\rightarrow y+T_y\m M$ is injective.
\end{lemma}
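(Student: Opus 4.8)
The plan is to argue by contradiction. Suppose $z_1,z_2\in B(y,r)\cap\m M$ are distinct points with $\proj{}_{y+T_y\m M}(z_1)=\proj{}_{y+T_y\m M}(z_2)$, and set $w:=z_1-z_2\neq 0$. Since $\proj{}_{y+T_y\m M}(z_i)=y+\proj{}_{T_y\m M}(z_i-y)$, the hypothesis forces $\proj{}_{T_y\m M}(w)=0$, i.e. $w\in T_y^\perp\m M$. The triangle inequality gives $\|w\|\le 2r<\tau/4$, so all the reach estimates of Proposition \ref{prop:reach} are available. The idea is that $w$ is normal to $T_y\m M$ and therefore \emph{almost} normal to $T_{z_2}\m M$ (the two tangent planes are nearly parallel), while the reach simultaneously forces the component of any chord in the normal direction at $z_2$ to be quadratically small; these two facts are incompatible unless $w=0$.

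First I would decompose $w=w^\top+w^\perp$ with $w^\top=\proj{}_{T_{z_2}\m M}(w)$ and $w^\perp=\proj{}_{T_{z_2}^\perp\m M}(w)$. To bound the tangential part, let $\phi$ be the angle between $T_{z_2}\m M$ and $T_y\m M$ in the sense of Proposition \ref{prop:reach}(iii); since $\|z_2-y\|\le r\le\tau/2$ we have $\cos^2\phi\ge 1-2r/\tau$. Interpreting the min-max definition of $\phi$ as the largest principal angle between the two $d$-planes, the operator $\proj{}_{T_{z_2}\m M}\,\proj{}_{T_y^\perp\m M}$ has norm $\sin\phi$, so for $w\in T_y^\perp\m M$ one gets $\|w^\top\|\le \sin\phi\,\|w\|$. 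To bound the normal part, apply Lemma \ref{lem:lengths} centered at $z_2$ (with $z_1$ playing the role of $z$ and radius $\|w\|\le\tau/2$): for the unit vector $\eta:=w^\perp/\|w^\perp\|\in T_{z_2}^\perp\m M$ we obtain $\|w^\perp\|=\dotp{\eta}{w}\le 2\|w\|^2/\tau$.

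Combining the two estimates via $\|w\|^2=\|w^\top\|^2+\|w^\perp\|^2$ yields
\[
\|w\|^2\le \sin^2\phi\,\|w\|^2+\frac{4\|w\|^4}{\tau^2},
\]
and dividing by $\|w\|^2>0$ gives $\cos^2\phi\le 4\|w\|^2/\tau^2$. Since $\|w\|<\tau/4$ the right-hand side is strictly less than $1/4$, whereas $r<\tau/8$ forces $\cos^2\phi\ge 1-2r/\tau>3/4$, a contradiction. Hence $z_1=z_2$ and the projection is injective.

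I expect the main obstacle to be the tangential bound $\|w^\top\|\le\sin\phi\,\|w\|$: translating the asymmetric min-max definition of $\phi$ in Proposition \ref{prop:reach}(iii) into the statement $\|\proj{}_{T_{z_2}\m M}\,\proj{}_{T_y^\perp\m M}\|=\sin\phi$ requires identifying $\cos\phi$ with the cosine of the largest principal angle between the two subspaces and invoking the standard symmetry of principal angles. Everything else is a routine chain of reach inequalities, and the numerology closes comfortably under the hypothesis $r<\tau/8$.
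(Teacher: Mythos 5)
Your proof is correct, but it is organized differently from the paper's. The paper argues contrapositively and works at the tangent space of one of the two points: for distinct $a,b\in B(y,r)\cap\m M$ it decomposes the chord $b-a=v+w$ with $v\in T_a\m M$, $w\in T_a^\perp\m M$, bounds $\Vert w\Vert$ by Lemma \ref{lem:lengths}, and then uses Proposition \ref{prop:reach}, part \textit{iii.}, to produce a unit $u\in T_y\m M$ with $\vert\langle u,b-a\rangle\vert\geq \Vert b-a\Vert(1-8r/\tau)>0$; this yields not only injectivity but, as a free byproduct, the quantitative lower bound $\Vert\proj{}_{T_y\m M}(b-a)\Vert\geq(1-8r/\tau)\Vert b-a\Vert$ on the projected chord. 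You instead assume the chord $w=z_1-z_2$ lies entirely in $T_y^\perp\m M$, split it along $T_{z_2}\m M\oplus T_{z_2}^\perp\m M$, and reach a contradiction through the Pythagorean identity, pitting the upper bound $\cos^2\phi\leq 4\Vert w\Vert^2/\tau^2$ against the reach lower bound $\cos^2\phi\geq 1-2r/\tau$. The ingredients are the same two results (Lemma \ref{lem:lengths} and Proposition \ref{prop:reach}, part \textit{iii.}), but you trade the paper's explicit test direction $u\in T_y\m M$ for an operator-norm estimate on $\proj{}_{T_{z_2}\m M}\proj{}_{T_y^\perp\m M}$; your numerology closes comfortably (in fact up to roughly $r<0.19\,\tau$, slightly beyond $\tau/8$), while the paper's version is the one to prefer if a quantitative bi-Lipschitz-type statement for the projection is wanted later. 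As for the step you flagged as the main obstacle, it is easier than you feared: taking $x:=z_2$ in the definition of $\phi$ in Proposition \ref{prop:reach}, part \textit{iii.}, one has $\cos\phi=\min_{u\in T_{z_2}\m M,\Vert u\Vert=1}\Vert\proj{}_{T_y\m M}u\Vert$, hence $\sin\phi=\max_{u\in T_{z_2}\m M,\Vert u\Vert=1}\Vert\proj{}_{T_y^\perp\m M}u\Vert=\Vert\proj{}_{T_y^\perp\m M}\proj{}_{T_{z_2}\m M}\Vert=\Vert\proj{}_{T_{z_2}\m M}\proj{}_{T_y^\perp\m M}\Vert$, the last equality by self-adjointness of orthogonal projections, so no appeal to the symmetry of principal angles is actually required (that symmetry is standard in any case, and the paper itself invokes the coincidence of the spectra of $\proj{}_{U}\proj{}_{V}$ and $\proj{}_{V}\proj{}_{U}$ in the proof of Lemma \ref{lem:sup-norm}).
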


There are two important conclusions that Lemma \ref{lem:inject} provides. 
First of all, it indicates that, under a certain radius bound, the manifold does not ``curve back'' into particular regions. 
This is helpful when we begin to examine upper bounds on local volumes. 
More importantly, if we let $J_{y,r}=\proj{}_{y+T_y\m M}(B(y,r)\cap \m M)$, then there is a well-defined inverse map $f$ of $\proj{}_{y+T_y\m M}$, $f: J_{y,r}\rightarrow B(y,r)\cap\m M$, when $r<\tau/8$. 
Part {\it iv} of Proposition \ref{prop:reach} implies that $f$ is at least a $C^2$ function, and part {\it v} of Proposition \ref{prop:reach} implies that there is a $d$-dimensional ball inside of $J_{y,r}$ of radius 
$\cos(\theta)r$, where $\theta=\arcsin(r/2\tau)$. 


Whenever we refer to such an $f$, we think of $J_{y,r}$ as a subset in the span of the first $d$ canonical directions, and we identify $f$ with the value $f$ takes in the span of the remaining $D-d$ directions. 
Thus, we identify $f$ with the function whose graph is a small part of the manifold. Such an identification is obtained via an affine transformation, so we may do this without any loss of generality. Using these assumptions, we may prove the following bounds.

\begin{proposition}
\label{prop:diffBounds}
Let $\eps < \tau/8$, and assume $f$ is defined above so that
$
v\longmapsto \begin{pmatrix}
						v\\
						f(v)
						\end{pmatrix}
$
is the inverse of $\proj{}_{y+T_y\m M}$ in $B(y,\eps)$ for some $y\in \m M$. Then
\begin{align}
\label{eq:jacoSUP}
\sup_{v\in B_d(0,\eps)}\Vert Df(v)\Vert \leq \frac{2\eps}{\tau-2\eps}
\end{align}
and 
\begin{align}
\label{eq:hessSUP}
\sup_{v\in B_d(0,\eps)}\sup_{u\in\m S^{D-d-1}} \l\Vert \sum_{i=1}^{D-d-1} u_i D^2f_i(v)\r\Vert \leq \frac{\tau^2}{(\tau-2\eps)^3}.
\end{align}
\end{proposition}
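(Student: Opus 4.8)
The plan is to exploit the geometric consequences of the reach, encoded in Proposition \ref{prop:reach} and Lemmas \ref{lem:lengths}--\ref{lem:inject}, to control the first and second derivatives of the inverse map $f$. The key idea is that $f$ is defined \emph{implicitly}: since $\begin{pmatrix} v \\ f(v) \end{pmatrix}$ parameterizes the manifold locally and is the inverse of the tangent-plane projection, the graph of $f$ lies on $\m M$, and so the derivative information of $f$ is tied to the tangent spaces of $\m M$ along the graph. First I would establish the Jacobian bound \eqref{eq:jacoSUP}. The columns of $Df(v)$ (together with the identity block on top) span the tangent space $T_z\m M$ at the point $z = \begin{pmatrix} v \\ f(v)\end{pmatrix}$. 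By part \textit{iii.} of Proposition \ref{prop:reach}, the angle $\phi$ between $T_y\m M$ (which is the horizontal coordinate plane, by our identification) and $T_z\m M$ satisfies $\cos(\phi)\geq \sqrt{1 - 2\|z-y\|/\tau}$. Since $\|z - y\| \le \eps$ (as $z \in B(y,\eps)\cap\m M$ by construction of $f$ on $B_d(0,\eps)$), this angle bound translates directly into a bound on $\|Df(v)\|$: if the tangent plane makes angle at most $\phi$ with the horizontal, the ``slope'' of the graph, namely $\|Df(v)\|$, is at most $\tan(\phi)$, and the stated bound $\frac{2\eps}{\tau-2\eps}$ should follow by estimating $\tan(\phi) \le \sin(\phi)/\cos(\phi)$ with $\sin(\phi)\le \sqrt{2\eps/\tau}$ and the cosine lower bound, then simplifying.

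Next I would turn to the Hessian bound \eqref{eq:hessSUP}, which I expect to be the main obstacle. The plan is to relate the second derivatives of $f$ to the extrinsic curvature of the manifold, which is controlled by the reach via part \textit{ii.} of Proposition \ref{prop:reach}: every arclength-parameterized geodesic $\gamma$ satisfies $\|\gamma''(t)\| \le 1/\tau$. Consider a unit vector $w \in \mb R^d$ and the curve $t\mapsto \begin{pmatrix} tw \\ f(tw)\end{pmatrix}$ on $\m M$; differentiating twice, its acceleration has a component $\sum_i w^T D^2 f_i(v) w$ in the normal directions. The subtlety is that $v \mapsto \begin{pmatrix}v\\ f(v)\end{pmatrix}$ is \emph{not} an arclength parameterization, so I must account for the reparameterization: the speed of this curve is $\sqrt{1 + \|Df \cdot w\|^2}$, and passing to unit speed introduces the Jacobian factors already bounded in \eqref{eq:jacoSUP}. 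Concretely, I would write the acceleration of the unit-speed geodesic in terms of $D^2f$ and $Df$, isolate the normal component (which is what $\sum_i u_i D^2 f_i(v)$ with $u \in \m S^{D-d-1}$ picks out), and use $\|\gamma''\|\le 1/\tau$ together with the speed/slope factors $(1 + \|Df\|^2)$ bounded via \eqref{eq:jacoSUP}. The factor $\frac{\tau^2}{(\tau-2\eps)^3}$ strongly suggests that one power of $\tau$ in the denominator comes from the curvature bound $1/\tau$ and two more powers come from a factor $(1+\|Df\|^2) \lesssim \tau^2/(\tau-2\eps)^2$ arising from the reparameterization.

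The delicate bookkeeping will be in correctly expressing the geodesic's normal acceleration. For a geodesic, the acceleration is purely normal to the manifold, so the normal component of $\frac{d^2}{ds^2}\begin{pmatrix}\gamma_1(s)\\ f(\gamma_1(s))\end{pmatrix}$ equals the full geodesic acceleration, whose magnitude is $\le 1/\tau$. Writing this in terms of $D^2 f$ via the chain rule, and observing that the purely first-derivative terms contribute only to the tangential part (which we may discard when projecting onto the normal space and testing against $u \in \m S^{D-d-1}$), I would extract a relation of the form $|u^T(\sum_i \text{(normal projection)}) | \cdot (\text{speed}^2) \le 1/\tau$ for all unit $w$, from which the quadratic-form bound on $\sum_i u_i D^2 f_i(v)$ follows by the relation between the Hessian's operator norm and its values on unit vectors (for symmetric matrices). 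I would then absorb the speed factor $\text{speed}^2 = 1 + \|Df\cdot w\|^2 \le 1 + \|Df\|^2$ using \eqref{eq:jacoSUP} to produce the claimed $\frac{\tau^2}{(\tau-2\eps)^3}$. The main risk is in the precise constant-tracking, since the normal component must be disentangled cleanly from tangential reparameterization terms; this is exactly the step where an explicit but careful computation is unavoidable.
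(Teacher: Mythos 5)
There is a genuine gap in your Jacobian step, and it is quantitative, not cosmetic. The tangent-angle estimate of Proposition \ref{prop:reach}, part \textit{iii.}, gives $\cos(\phi)\geq\sqrt{1-2\eps/\tau}$, hence $\sin(\phi)\leq\sqrt{2\eps/\tau}$, and your own identity $\Vert Df(v)\Vert=\tan(\phi)$ (which is correct for the largest principal angle between $T_z\m M$ and the horizontal plane) then yields only
\begin{align*}
\Vert Df(v)\Vert\;\leq\;\frac{\sqrt{2\eps/\tau}}{\sqrt{1-2\eps/\tau}}\;=\;\sqrt{\frac{2\eps}{\tau-2\eps}},
\end{align*}
which scales like $\sqrt{\eps/\tau}$, not like $\eps/\tau$. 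Since $\frac{2\eps}{\tau-2\eps}<1$ when $\eps<\tau/8$, the square root is strictly larger than the claimed bound, so the ``simplification'' you invoke is an algebra error and \eqref{eq:jacoSUP} does not follow. This cannot be repaired by sharpening your use of part \textit{iii.}: that angle bound is itself only square-root in $\|x-y\|/\tau$, so no route that derives $Df$ directly from it can produce the linear-in-$\eps$ bound. The paper's proof gets the linear bound by reversing your order of logic: it first establishes the Hessian-type inequality $\sup_{u}\Vert\sum_i u_i D^2f_i(v)\Vert\leq\frac{1}{\tau}(1+\Vert Df(v)\Vert)^3$ (via invertibility of the Jacobian of the normal-bundle map on the $\tau$-tube, a consequence of the reach), then integrates along rays using $Df(0)=0$ to obtain the self-bounding inequality $a(\eps)\leq\frac{\eps}{\tau}(1+a(\eps))^3$ for $a(\eps)=\sup_{v\in B_d(0,\eps)}\Vert Df(v)\Vert$, and finally runs a continuity argument on the polynomial $x(1-x)^2$ to extract $a(\eps)\leq\frac{2\eps}{\tau-2\eps}$. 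That bootstrap is the essential idea your proposal is missing, and without it the stated constants in both \eqref{eq:jacoSUP} and \eqref{eq:hessSUP} are out of reach (your weaker Jacobian bound would propagate into a larger Hessian constant than $\tau^2/(\tau-2\eps)^3$, and into worse constants in Lemma \ref{lem:volUB} and Theorem \ref{thm:MFD}).

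Your Hessian step, by contrast, is sound and is a genuinely different derivation of the key intermediate inequality: writing a geodesic through $z=(v,f(v))$ as $(\gamma_1(s),f(\gamma_1(s)))$, using that $\gamma''$ has norm at most $1/\tau$ (Proposition \ref{prop:reach}, part \textit{ii.}), and bounding the vertical component $D^2f[\gamma_1',\gamma_1']+Df\,\gamma_1''$ by the triangle inequality gives $|w^T(\sum_i u_iD^2f_i(v))w|\leq\frac{(1+\Vert Df\Vert)(1+\Vert Df\,w\Vert^2)}{\tau}\leq\frac{(1+\Vert Df\Vert)^3}{\tau}$ for unit $w$, which by symmetry of $\sum_i u_iD^2f_i$ matches the paper's inequality \eqref{eq:v10} (obtained there from injectivity of the normal-bundle embedding instead). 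So the correct repair of your proposal is not to fix the angle computation but to reorganize: prove your geodesic-curvature Hessian inequality first, then feed it into the fundamental-theorem-of-calculus integration and the bootstrap described above; this yields both \eqref{eq:jacoSUP} and, upon substituting $\Vert Df\Vert\leq\frac{2\eps}{\tau-2\eps}$ so that $(1+\Vert Df\Vert)^3\leq\tau^3/(\tau-2\eps)^3$, the exact bound \eqref{eq:hessSUP}.
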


\subsubsection{Volume bounds}

The main result of this section is Lemma \ref{lem:volUB}, which allows us to compare volumes in $\m M_\sigma$ with volumes in $\m M$. It also establishes an upper bound on volumes, which is an essential ingredient when we control the conditional distribution of $\Pi$ subject to being in a particular $C_{j,k}$. The form of the bounds also allows us to cancel out noisy terms that would make the estimates depend upon the ambient dimension $D$.

\begin{lemma}
\label{lem:volUB}
Suppose $\sigma<\tau$, suppose $U\subseteq\m M$ is measurable, and define $P:\m M_\sigma\rightarrow\m M$ so that $x\mapsto \proj{}_{\m M}(x)$
 under $P$. Then
\begin{enumerate}
\item[i.] \small
$\displaystyle
\left(1-\frac{\sigma}{\tau}\right)^d\vol(U)\Vol(B_{D-d}(0,\sigma))\leq\Vol(P^{-1}(U))\leq \left(1+\frac{\sigma}{\tau}\right)^d\vol (U)\Vol(B_{D-d}(0,\sigma))
$\normalsize
\item[ii.] If $r+\sigma\leq\tau/8$, then \small
$$
\Vol(\m M_\sigma\cap B(y,r))\leq \left(1+\frac{\sigma}{\tau}\right)^d\left(1+\left(\frac{2(r+\sigma)}{\tau-2(r+\sigma)}\right)^2\right)^{d/2}\Vol(B_d(0,r+\sigma))\Vol(B_{D-d}(0,\sigma)).
$$\normalsize
\end{enumerate}
\end{lemma}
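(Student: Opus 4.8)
The plan is to prove both volume bounds by reducing integrals over $\m M$ and $\m M_\sigma$ to integrals over tangent-space coordinates, where the Jacobian and Hessian bounds from Proposition \ref{prop:diffBounds} give explicit control. The key observation is that, since $\sigma<\tau$, the normal bundle map $(y,\xi)\mapsto y+\xi$ with $y\in\m M$ and $\xi\in T_y^\perp\m M$, $\|\xi\|<\sigma$, is a diffeomorphism onto $\m M_\sigma$ whose fibers $P^{-1}(\{y\})$ are exactly the $(D-d)$-dimensional normal disks of radius $\sigma$. Thus the tube decomposes as a disk bundle over $\m M$.

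\textbf{Part i.} First I would use the normal-bundle parametrization to write $\Vol(P^{-1}(U))=\int_U \left(\int_{\{\xi\in T_y^\perp\m M:\|\xi\|<\sigma\}} \mathfrak{J}(y,\xi)\, d\xi\right) d\vol(y)$, where $\mathfrak{J}(y,\xi)$ is the Jacobian of the map $(y,\xi)\mapsto y+\xi$ relative to the product of the volume measure on $\m M$ and Lebesgue measure on the normal disk. The heart of the argument is the classical fact (a consequence of the bound $\|\gamma''\|\le 1/\tau$ from Proposition \ref{prop:reach} part \textit{ii.}, equivalently that principal curvatures are bounded by $1/\tau$) that this Jacobian factors as $\prod_{i=1}^d (1-\kappa_i \langle\xi,n_i\rangle)$ over the principal curvatures $\kappa_i$ with $|\kappa_i|\le 1/\tau$. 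Since $\|\xi\|<\sigma$, each factor lies in $[1-\sigma/\tau,\,1+\sigma/\tau]$, so $\left(1-\frac{\sigma}{\tau}\right)^d\le \mathfrak{J}(y,\xi)\le \left(1+\frac{\sigma}{\tau}\right)^d$ pointwise. Integrating the inner integral over the normal disk gives $\Vol(B_{D-d}(0,\sigma))$ times a factor pinched between $(1\mp\sigma/\tau)^d$, and integrating over $U$ yields both inequalities in \textit{i.} after multiplying by $\vol(U)$.

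\textbf{Part ii.} Here I would fix $y$ and pass to the tangent-plane coordinates supplied by Proposition \ref{prop:diffBounds}: writing a neighborhood of $y$ in $\m M$ as the graph $v\mapsto (v,f(v))$ over $B_d(0,\eps)$ with $\eps=r+\sigma$. Since every point of $\m M_\sigma\cap B(y,r)$ projects onto a piece of $\m M$ inside $B(y,r+\sigma)$, and that piece further projects (via $\proj{}_{y+T_y\m M}$) into $B_d(0,r+\sigma)$, I would bound $\Vol(\m M_\sigma\cap B(y,r))$ by the tube over this graph patch. Applying Part \textit{i.} locally replaces the normal-disk contribution by $(1+\sigma/\tau)^d\Vol(B_{D-d}(0,\sigma))$; the remaining $d$-dimensional surface-area integral $\int \sqrt{\det(I+Df(v)^TDf(v))}\,dv$ over $B_d(0,r+\sigma)$ is controlled using the Jacobian bound \eqref{eq:jacoSUP}, which gives $\|Df(v)\|\le \frac{2(r+\sigma)}{\tau-2(r+\sigma)}$, so each singular value of $Df$ is at most that quantity and $\sqrt{\det(I+Df^TDf)}\le \left(1+\left(\frac{2(r+\sigma)}{\tau-2(r+\sigma)}\right)^2\right)^{d/2}$. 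Multiplying the surface-area bound $\Vol(B_d(0,r+\sigma))$ times this curvature factor, times the normal contribution, yields exactly the stated inequality.

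The main obstacle I anticipate is justifying the pointwise factorization of the tube Jacobian and confirming that the product $\prod(1-\kappa_i\langle\xi,n_i\rangle)$ stays in the asserted range uniformly; this is essentially the content of Weyl's tube formula but needs the curvature bound $|\kappa_i|\le 1/\tau$, which I would derive from Proposition \ref{prop:reach} (the geodesic curvature bound, together with the standard fact that reach $\tau$ forces all principal curvatures to be bounded by $1/\tau$). A secondary technical point is ensuring that in Part \textit{ii.} the containment $\m M_\sigma\cap B(y,r)\subseteq P^{-1}(\m M\cap B(y,r+\sigma))$ holds and that $\m M\cap B(y,r+\sigma)$ is covered by a single graph chart over $B_d(0,r+\sigma)$ — this is exactly where the hypothesis $r+\sigma\le\tau/8$ and the injectivity from Lemma \ref{lem:inject} are needed, so I would invoke those to keep the local inversion $f$ well-defined and single-valued throughout the relevant region.
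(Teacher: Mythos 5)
Your proposal is correct in substance, but for part \emph{i.} it follows a genuinely different route from the paper. The paper never invokes the normal-bundle/tube-formula Jacobian: instead it partitions $U$ into small pieces lying in graph charts, parametrizes the tube extrinsically by $(v,\beta)\mapsto (v,f(v)) + \bigl(Df(v)^T,\,-I\bigr)^T\beta$, writes the Jacobian as the block matrix with blocks $I+\sum_i\beta_i D^2f_i(v)$, $Df(v)^T$, $Df(v)$, $-I$, and pinches its volume using the parallelotope perturbation bounds of Proposition \ref{prop:volume} together with the Hessian bound \eqref{eq:hessSUP}; the factors $(1\pm\sigma/\tau)^d$ only emerge after letting the chart radius $\eps\to 0$. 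Your intrinsic argument buys a cleaner and shorter proof of \emph{i.}: the pinching is pointwise, no partition-and-limit is needed, and the lower bound comes for free without the paper's shrunk-ball device $B_{D-d}(0,(1-\eps/\tau)\sigma)$ inside the skewed fiber $E_{\gamma,v}$. What the paper's route buys is self-containedness: it uses only Proposition \ref{prop:reach} and elementary matrix-volume estimates, avoiding second fundamental forms and the $C^1$ disk-bundle structure of the normal bundle, and the graph coordinates it sets up are reused verbatim in Lemmas \ref{lem:evalLB} and \ref{lem:sup-norm}. Your part \emph{ii.} coincides with the paper's proof: the same containment $\m M_\sigma\cap B(y,r)\subseteq P^{-1}(\m M\cap B(y,r+\sigma))$, part \emph{i.} applied to that cap, and the bound $\vol(\m M\cap B(y,r+\sigma))\le \bigl(1+(\tfrac{2(r+\sigma)}{\tau-2(r+\sigma)})^2\bigr)^{d/2}\Vol(B_d(0,r+\sigma))$ from \eqref{eq:jacoSUP} via Lemma \ref{lem:inject}.

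One technical slip needs repair: in codimension greater than one the tube Jacobian does \emph{not} factor as $\prod_{i=1}^d\bigl(1-\kappa_i\langle\xi,n_i\rangle\bigr)$, because the Weingarten maps in different normal directions need not commute, so there is no single frame of principal curvatures serving all $\xi$ simultaneously. The correct statement is that the Jacobian equals $\det(I_d - A_\xi)$, where $A_\xi$ is the (symmetric) shape operator in the direction $\xi$; since for a geodesic $\gamma''(0)=\mathrm{II}(\gamma'(0),\gamma'(0))$, part \emph{ii.} of Proposition \ref{prop:reach} gives $\sup_{\|u\|=1}\|\mathrm{II}(u,u)\|\le 1/\tau$, hence $\|A_\xi\| = \sup_{\|u\|=1}|\langle \mathrm{II}(u,u),\xi\rangle| \le \|\xi\|/\tau\le\sigma/\tau$, and the eigenvalue pinching $(1-\sigma/\tau)^d\le\det(I_d-A_\xi)\le(1+\sigma/\tau)^d$ restores your bound. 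With this correction (and the routine observation that the normal map is a $C^1$ diffeomorphism onto $\m M_\sigma$ because $\det(I_d-A_\xi)>0$ for $\|\xi\|<\tau$ and nearest-point uniqueness gives injectivity), your argument goes through.
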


\subsubsection{Absolute continuity of the pushforward of $U_{\m M_\sigma}$ and local moments}

Recall that $U_{\m M_\sigma}$ is the uniform distribution over $\m M_\sigma$, and let $U_\m M:=\frac{d\vol}{\vol(\m M)}$ be the uniform distribution over $\m M$. 
In this section, we exploit the volume bounds of the previous subsection to obtain control over probabilities and local moments of $U_{\m M_\sigma}$.  Our first result allows us to get the lower bounds for $U_{\m M_\sigma}$ that are independent of the ambient dimension $D$. 

\begin{lemma}
Suppose $\sigma<\tau$, and let $\widetilde{U}_{\m M_\sigma}$ denote the pushforward of $U_{\m M_\sigma}$ under $\proj{}_{\m M}$. Then $\widetilde{U}_{\m M_\sigma}$ and $U_{\m M}$ are mutually absolutely continuous with respect to each other, and 
$$
\left(\frac{\tau-\sigma}{\tau+\sigma}\right)^d\leq \frac{d \widetilde{U}_{\m M_\sigma}}{d U_{\m M}}\leq\left(\frac{\tau+\sigma}{\tau-\sigma}\right)^d.
$$
\end{lemma}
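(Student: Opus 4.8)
The plan is to derive this lemma as a direct corollary of Lemma \ref{lem:volUB}, part i., so very little genuinely new work is required. Write $P=\proj{}_{\m M}$ and define the Borel measure $\nu$ on $\m M$ to be the pushforward of Lebesgue measure restricted to the tube, i.e.\ $\nu(A):=\Vol(P^{-1}(A))$ for measurable $A\subseteq\m M$. This is well-defined because $\sigma<\tau$ guarantees that $P$ is defined (and continuous, hence measurable) on all of $\m M_\sigma$; note also that $P^{-1}(\m M)=\m M_\sigma$, so $\nu(\m M)=\Vol(\m M_\sigma)$. By the definition of pushforward, $\widetilde{U}_{\m M_\sigma}(A)=U_{\m M_\sigma}(P^{-1}(A))=\nu(A)/\Vol(\m M_\sigma)$, while $U_\m M(A)=\vol(A)/\vol(\m M)$.

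First I would apply Lemma \ref{lem:volUB}, part i., to every measurable $A\subseteq\m M$ to obtain the two-sided sandwich
\[
\left(1-\tfrac{\sigma}{\tau}\right)^d\Vol(B_{D-d}(0,\sigma))\,\vol(A)\le\nu(A)\le\left(1+\tfrac{\sigma}{\tau}\right)^d\Vol(B_{D-d}(0,\sigma))\,\vol(A).
\]
Since $\sigma<\tau$, both constants are strictly positive, so this inequality immediately yields that $\nu$ and $\vol$ (restricted to $\m M$) are mutually absolutely continuous; and by the standard argument that a set-level bound $c_1\vol(A)\le\nu(A)\le c_2\vol(A)$ holding for all $A$ forces $c_1\le \tfrac{d\nu}{d\vol}\le c_2$ almost everywhere (otherwise integrating the density over the set where a bound is violated would contradict the set-level inequality), we get
\[
\left(1-\tfrac{\sigma}{\tau}\right)^d\Vol(B_{D-d}(0,\sigma))\le\frac{d\nu}{d\vol}\le\left(1+\tfrac{\sigma}{\tau}\right)^d\Vol(B_{D-d}(0,\sigma))\quad (\vol\text{-a.e.}).
\]
Mutual absolute continuity of $\nu$ and $\vol$ transfers directly to $\widetilde{U}_{\m M_\sigma}$ and $U_\m M$, as each of the latter is a positive constant multiple of the former.

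Finally I would normalize. From the relations above, $\frac{d\widetilde{U}_{\m M_\sigma}}{dU_\m M}=\frac{\vol(\m M)}{\Vol(\m M_\sigma)}\,\frac{d\nu}{d\vol}$, so the only remaining quantity to control is the normalizing constant $\Vol(\m M_\sigma)=\nu(\m M)$. Applying Lemma \ref{lem:volUB}, part i., once more with $U=\m M$ bounds $\Vol(\m M_\sigma)$ between $\left(1-\tfrac{\sigma}{\tau}\right)^d\vol(\m M)\Vol(B_{D-d}(0,\sigma))$ and $\left(1+\tfrac{\sigma}{\tau}\right)^d\vol(\m M)\Vol(B_{D-d}(0,\sigma))$. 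Substituting the extreme cases into the quotient, the ambient-dimension-dependent factor $\Vol(B_{D-d}(0,\sigma))$ and the factor $\vol(\m M)$ cancel, leaving exactly $\left(\frac{\tau-\sigma}{\tau+\sigma}\right)^d\le\frac{d\widetilde{U}_{\m M_\sigma}}{dU_\m M}\le\left(\frac{\tau+\sigma}{\tau-\sigma}\right)^d$. This cancellation — the disappearance of the $D$-dependent tube cross-section volume — is the conceptual heart of the statement, but it is automatic once Lemma \ref{lem:volUB} is in hand; the real difficulty lies entirely in that earlier volume-comparison lemma (and the Jacobian/Hessian bounds of Proposition \ref{prop:diffBounds} feeding it), not here. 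Within the present argument the only care needed is the routine measure-theoretic step extracting pointwise density bounds from set-level inequalities.
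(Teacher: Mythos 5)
Your proposal is correct and follows exactly the route the paper intends: the paper's own proof is the single line ``This is a straightforward consequence of part \emph{i.} of Lemma \ref{lem:volUB},'' and your argument is precisely the expansion of that remark (two-sided volume bound applied to arbitrary measurable $A\subseteq\m M$ and to $A=\m M$ for the normalization, followed by the routine extraction of a.e.\ density bounds from set-level inequalities, with the $\Vol(B_{D-d}(0,\sigma))$ factor cancelling in the quotient). No gaps; the computations and the measure-theoretic step all check out.
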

\begin{proof}
This is a straightforward consequence of part {\it i.} of Lemma \ref{lem:volUB}.
\end{proof}

The next lemma quantitatively establishes the the decay of the local eigenvalues required in the second part of Assumption {\bf (A3)}.

\begin{lemma}
\label{lem:evalUB}
Suppose $\Pi$ is a distribution supported on $\m M_\sigma$, and let $r<\tau/2$. 
Further assume that $Z$ is the random variable drawn from $\Pi$ conditioned on the event $Z\in Q$ where $\m M_\sigma\cap Q\subset B(y,r)$ for some $y\in\m M$. 
If $\Sigma$ is the covariance matrix of $Z$, then
$$
\sum_{i=d+1}^D\lambda_i(\Sigma)\leq 2\sigma^2+\frac{8r^4}{\tau^2},
$$
where $\lambda_i(\Sigma)$ are the eigenvalues of $\Sigma$ arranged in the decreasing order. 
\end{lemma}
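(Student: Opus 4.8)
The plan is to bound the sum of the $D-d$ smallest eigenvalues of $\Sigma$ by exhibiting a single $d$-dimensional affine subspace $W = y + T_y\m M$ whose squared-distance-from-$Z$ cost already controls $\sum_{i=d+1}^D \lambda_i(\Sigma)$. The key observation is that $V_{j,k}$ in the definition of $\Sigma$'s trailing spectrum is chosen \emph{optimally} to minimize the residual (see \eqref{eq:a20} and Remark \ref{rem:1}.\textit{iii.}), so for any competing $d$-dimensional subspace $W$ we have
\[
\sum_{i=d+1}^D \lambda_i(\Sigma) = \min_{\dim(V)=d}\mb E\|Z - c - \proj{}_V(Z-c)\|^2 \leq \mb E\|Z - \proj{}_{y+T_y\m M}(Z)\|^2,
\]
where $c = \mb E Z$ and I used that projecting onto the best-fit subspace through the centroid is no worse than projecting onto any fixed affine subspace. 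Thus it suffices to bound the mean squared distance of $Z$ to the tangent plane $y + T_y\m M$.

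\textbf{Next I would} decompose this distance pointwise. For $x \in \m M_\sigma \cap Q \subseteq B(y,r)$, write $x = p + \eta$ where $p = \proj{}_{\m M}(x) \in \m M$ and $\eta \in T_p^\perp\m M$ with $\|\eta\| \leq \sigma$ (this is the tube decomposition guaranteed by $\sigma < \tau$). The distance from $x$ to the affine tangent plane $y + T_y\m M$ splits into a contribution from the manifold footpoint $p$ and a contribution from the normal displacement $\eta$. The footpoint term is controlled by Lemma \ref{lem:lengths}: for any unit normal $\eta_0 \in T_y^\perp\m M$, $|\langle \eta_0, p - y\rangle| \leq 2r^2/\tau$, which bounds the squared distance of $p$ to the tangent plane by $(2r^2/\tau)^2 = 4r^4/\tau^2$ after summing over an orthonormal basis of $T_y^\perp\m M$ (the bound is on the full normal component, not per-coordinate, so one argues via $\|\proj{}_{T_y^\perp\m M}(p-y)\| \leq 2r^2/\tau$). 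The noise term contributes at most $\|\eta\|^2 \leq \sigma^2$ to the squared distance. Combining these via $(a+b)^2 \leq 2a^2 + 2b^2$ gives a pointwise bound of the form $\|x - \proj{}_{y+T_y\m M}(x)\|^2 \leq 2\sigma^2 + 2\cdot(2r^2/\tau)^2 = 2\sigma^2 + 8r^4/\tau^2$.

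\textbf{Taking expectations} over $Z$ (which only averages a quantity already bounded uniformly on the support) yields
\[
\sum_{i=d+1}^D \lambda_i(\Sigma) \leq \mb E\|Z - \proj{}_{y+T_y\m M}(Z)\|^2 \leq 2\sigma^2 + \frac{8r^4}{\tau^2},
\]
which is exactly the claimed estimate.

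\textbf{The main obstacle} I anticipate is making the cross-term bookkeeping in the distance decomposition fully rigorous, since the normal displacement $\eta$ lives in $T_p^\perp\m M$ rather than $T_y^\perp\m M$, and these two normal spaces differ by an angle controlled by $\|p - y\| \leq r$ via part \textit{iii.} of Proposition \ref{prop:reach}. One must verify that projecting onto $y + T_y\m M$ does not let the tilt between $T_p^\perp\m M$ and $T_y^\perp\m M$ inflate the noise contribution beyond the clean $\sigma^2$ bound; the cleanest way to sidestep this is to bound $\|x - \proj{}_{y+T_y\m M}(x)\| \leq \|\proj{}_{T_y^\perp\m M}(x - y)\|$ directly and then split $x - y = (p - y) + \eta$ inside that single normal projection, so that the triangle inequality absorbs both terms at once: $\|\proj{}_{T_y^\perp\m M}(x-y)\| \leq \|\proj{}_{T_y^\perp\m M}(p-y)\| + \|\eta\| \leq 2r^2/\tau + \sigma$, after which $(2r^2/\tau + \sigma)^2 \leq 2\sigma^2 + 8r^4/\tau^2$ closes the argument without ever comparing the two normal spaces.
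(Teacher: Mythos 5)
Your proposal is correct and is essentially the paper's own proof: the paper likewise uses the variational characterization of the trailing eigenvalues, recenters from $\mb E Z$ to $y$ (via the exact variance identity, which is just your ``centroid is optimal'' remark made explicit), decomposes $z=x+\beta$ with $\beta\in T_x^\perp\m M$, $\|\beta\|\le\sigma$, and bounds $\|z-y-\proj{}_{T_y\m M}(z-y)\|\le\sigma+2r^2/\tau$ by the triangle inequality inside the single projection onto $T_y^\perp\m M$ together with Lemma \ref{lem:lengths}, finishing with $(a+b)^2\le 2a^2+2b^2$. Even the resolution of your anticipated obstacle (never comparing $T_p^\perp\m M$ with $T_y^\perp\m M$, since $\|\proj{}_{T_y^\perp\m M}\beta\|\le\|\beta\|$ regardless of tilt) is exactly how the paper handles it in (\ref{eq:f01}), and your one loose spot --- applying Lemma \ref{lem:lengths} to the footpoint, which strictly lies in $B(y,r+\sigma)$ rather than $B(y,r)$ --- is a simplification the paper's proof shares.
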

Finally, we derive a lower bound on the upper eigenvalues of the local covariance for the uniform distribution (needed to satisfy the first part of assumption {\bf(A3)}). 
This is done in the following lemma.

\begin{lemma}
\label{lem:evalLB}
Suppose that $Q\subseteq\mathbb{R}^D$ is such that
\[
B(y,r_1)\subseteq Q\text{ and } \m M_\sigma\cap Q\subset B(y,r_2)
\]
for some $y\in \m M$ and $\sigma<r_1<r_2<\tau/8-\sigma$. Let $Z$ be drawn from $U_{\m M_\sigma}$ conditioned on the event $Z\in Q$, and suppose $\Sigma$ is the covariance matrix of $Z$. Then
\[
\lambda_d(\Sigma)\geq \frac{1}{4\left(1+\frac{\sigma}{\tau}\right)^d}\left(\frac{r_1-\sigma}{r_2+\sigma}\right)^d\left(\frac{1-\left(\frac{r_1-\sigma}{2\tau}\right)^2}{1+\left(\frac{2(r_2+\sigma)}{\tau-2(r_2+\sigma)}\right)^2}\right)^{d/2}\frac{(r_1-\sigma)^2}{d}.
\]
\end{lemma}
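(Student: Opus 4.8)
The plan is to obtain a lower bound on $\lambda_d(\Sigma)$, the $d$-th largest eigenvalue of the local covariance, by exhibiting an explicit subspace on which the variance is large. By the variational characterization of eigenvalues, $\lambda_d(\Sigma) = \max_{\dim(W)=d} \min_{u \in W, \|u\|=1} u^T \Sigma u$; since $\Sigma = \mathbb{E}[(Z - \mathbb{E}Z)(Z-\mathbb{E}Z)^T]$, it suffices to find a $d$-dimensional subspace $W$ such that $\mathbb{E}|\langle u, Z - \mathbb{E}Z\rangle|^2$ is uniformly large for all unit $u \in W$. The natural candidate is the tangent space $T_y\m M$ at the anchor point $y$ (pushed to the span of the first $d$ coordinates after the affine identification used in the local-inversion setup). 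The intuition is that the distribution, living in a tube of radius $\sigma$ around a $d$-dimensional manifold over a region of radius $\sim r_1$, spreads out in the $d$ tangent directions by an amount comparable to $r_1$, and this tangential spread dominates the normal contamination of order $\sigma$ provided $r_1 > \sigma$ with room to spare.

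The key steps, in order, are as follows. First I would set up the local graph coordinates from the earlier subsection: using Lemma \ref{lem:inject}, write the portion of $\m M$ inside $B(y,r_2)$ as the graph $v \mapsto (v, f(v))$ over a domain $J_{y,r_2} \subseteq T_y\m M$, and use Proposition \ref{prop:reach} part \textit{v.} to guarantee that $J_{y,r_2}$ contains a $d$-ball of radius $\cos(\theta) r_1$-ish, so that the tangential footprint of $Q$ is not too small. Second, I would reduce the problem to the uniform distribution on the manifold: since $Z$ is drawn from $U_{\m M_\sigma}$ conditioned on $Z \in Q$, I would use Lemma \ref{lem:volUB} part \textit{i.} (the volume comparison $\vol(P^{-1}(U)) \asymp \vol(U)\,\Vol(B_{D-d}(0,\sigma))$) together with the pushforward density bounds from the preceding lemma to transfer the computation of second moments in the tube to a $d$-dimensional integral over a region of $\m M$, picking up the factors $\left(\frac{\tau-\sigma}{\tau+\sigma}\right)^d$ and the analogous $\left(1+\frac{\sigma}{\tau}\right)^{-d}$ that appear in the final bound. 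Third, I would lower-bound $\min_{\|u\|=1, u \in W} \mathbb{E}|\langle u, Z-\mathbb{E}Z\rangle|^2$ for $W = T_y\m M$ by a variance computation on the $d$-ball: the projection of $Z - \mathbb{E}Z$ onto $W$ has second moment along any unit direction at least the variance of the uniform distribution on a $d$-ball of radius $(r_1 - \sigma)$, which is proportional to $(r_1-\sigma)^2/(d+2) \sim (r_1-\sigma)^2/d$, explaining the trailing factor in the statement. The Jacobian distortion $\|Df\| \leq \frac{2\eps}{\tau - 2\eps}$ from Proposition \ref{prop:diffBounds}, evaluated at the relevant scale $r_2 + \sigma$, produces the $\left(1+\left(\frac{2(r_2+\sigma)}{\tau-2(r_2+\sigma)}\right)^2\right)^{-d/2}$ correction and the $\left(1-\left(\frac{r_1-\sigma}{2\tau}\right)^2\right)^{d/2}$ factor comes from the $\cos^d(\theta)$ loss in the radius of the inscribed tangential ball.

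The main obstacle I anticipate is bookkeeping the several multiplicative distortions simultaneously and making sure each is bounded in the correct direction. Specifically, the variance lower bound requires a \emph{lower} bound on the tangential spread of the measure, which means I need a \emph{lower} bound on the volume (density) of the relevant region of $\m M$ and on the radius of the inscribed $d$-ball, while the graph map $f$ simultaneously distorts $d$-dimensional volume by a factor controlled by $\det(I + Df^T Df)^{1/2}$, which must be bounded \emph{below} by $1$ and not allowed to inflate the denominator uncontrollably — this is where Proposition \ref{prop:volume} (the $(1-q)^d$ volume-perturbation bound) enters to keep the Jacobian factor from destroying the estimate. A secondary subtlety is the centering: $\mathbb{E}Z$ need not equal $y$, and the normal displacement of order $\sigma$ can shift the mean, so I must verify that the tangential variance about the true mean $\mathbb{E}Z$ is still bounded below, which follows because the variance about the mean is the minimum over all centerings, so subtracting $\mathbb{E}Z$ can only decrease the second moment relative to a fixed reference point and I should instead bound the variance directly as $\Var(\langle u, Z\rangle) = \min_c \mathbb{E}|\langle u, Z\rangle - c|^2$. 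Keeping the direction of every inequality consistent through these reductions is the delicate part; the individual estimates are routine once the correct chain of comparisons is fixed.
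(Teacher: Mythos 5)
Your outline gets several things right: taking $W=T_y\m M$ in the max--min characterization of $\lambda_d(\Sigma)$ is legitimate and essentially equivalent to the paper's argument (which intersects $V_{d-1}^\perp$ with $T_y\m M$ and applies Courant--Fischer to a unit vector $u_\ast$ in the intersection); the inscribed tangential ball of radius $\cos(\delta)(r_1-\sigma)$ with $\delta=\arcsin((r_1-\sigma)/2\tau)$ is exactly where the factor $\bigl(1-\bigl(\frac{r_1-\sigma}{2\tau}\bigr)^2\bigr)^{d/2}$ comes from, and the Jacobian factor in the denominator indeed comes from Lemma~\ref{lem:volUB}, part \textit{ii}. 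The genuine gap is in your second step. Transferring the second moment to $\m M$ via the pushforward/density comparison replaces $Z$ by $\proj{}_{\m M}(Z)$ and throws away the normal displacement $\xi = Z-\proj{}_{\m M}(Z)$; but for $u\in T_y\m M$ one has $\langle u,\xi\rangle\neq 0$ in general, since $\xi$ lies in $T^\perp_{\proj{}_{\m M}(Z)}\m M$, not in $T_y^\perp\m M$, and Proposition~\ref{prop:reach}, part \textit{iii}, only bounds the tangential leakage by $\sigma\sqrt{2(r_2+\sigma)/\tau}$. Any execution of your plan therefore produces a bound of the form $\frac12(\text{main term})-c\,\sigma^2$ (or a cross term $2AB$ after Cauchy--Schwarz), and this additive $\sigma^2$ contamination cannot be absorbed into the purely multiplicative constants of the stated bound under the lemma's hypotheses: the lemma allows $r_1-\sigma\ll\sigma$, and for large $d$ the factor $\bigl(\frac{r_1-\sigma}{r_2+\sigma}\bigr)^d$ makes the main term exponentially small while the contamination stays of order $\sigma^2$. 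Your phrase ``provided $r_1>\sigma$ with room to spare'' is precisely the assumption the lemma does not make.

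The paper avoids the transfer entirely, and this is the one idea missing from your proposal. After restricting the integral to $B(y,r_1)\cap\m M_\sigma$ (using $B(y,r_1)\subseteq Q$ for the numerator and $\m M_\sigma\cap Q\subset B(y,r_2)$ for the normalizing volume), it changes variables through the \emph{vertical-shift} map $g(v,\beta)=(v,\,f(v)+\beta)$ on $\proj{}_{T_y\m M}(\m M\cap B(y,r_1-\sigma))\times\bigl(T_y^\perp\m M\cap B(0,\sigma)\bigr)$. This map is injective, its image lies in $\m M_\sigma\cap B(y,r_1)$, and its Jacobian is lower triangular with unit diagonal, hence it preserves Lebesgue measure exactly; crucially, because its fibers are translates of $T_y^\perp\m M$ rather than the true (tilted) normal fibers, one has $\langle u, g(v,\beta)\rangle=\langle u,v\rangle$ \emph{exactly} for $u\in T_y\m M$, so the normal displacement contributes zero with no additive loss, and the fiber volume $\Vol(B_{D-d}(0,\sigma))$ cancels exactly against the tube-volume upper bound in the denominator (your route would additionally lose a $(1-\sigma/\tau)^d$-type factor from the lower bound in Lemma~\ref{lem:volUB}, part \textit{i}, which does not appear in the statement). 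Note also that neither Proposition~\ref{prop:volume} nor a lower bound on $\det(I+Df^TDf)^{1/2}$ is needed here, since the unit-Jacobian map sidesteps all graph-map volume distortion on the lower-bound side. Your centering remark correctly identifies the issue but points at the wrong fix: the resolution is that over the symmetric ball $B_d(0,q)$ the cross term $\int\langle u,v\rangle\langle u,c\rangle\,dv$ vanishes, so the second moment about \emph{any} center dominates $\int_{B_d(0,q)}\langle u,v\rangle^2\,dv$; that is how the paper obtains a bound uniform in the unknown centering $\mathbb{E}Z$.
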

The following statement is key to establishing the error bounds for GMRA measured in sup-norm.
\begin{lemma}
\label{lem:sup-norm}
Assume that conditions of Lemma \ref{lem:evalLB} hold, and let $V_d:=V_d(\Sigma)$ be the subspace corresponding to the first $d$ principal components of $Z$. Then 
\begin{align*}
&
\sup_{x\in Q}\left\|x-\mb EZ-\proj{}_{V_d}(x-\mb EZ)\right\|\leq 2\sigma+\frac{4r_2^2}{\tau}+
\frac{r_2}{r_1-\sigma}\sqrt{4\sigma^2+\frac{16r_2^4}{\tau^2}}\,\gamma(\sigma,\tau,d,r_1,r_2),
\end{align*}
where 
$\gamma(\sigma,\tau,d,r_1,r_2)=
4\sqrt{2d}\left(1+\frac{\sigma}{\tau}\right)^{d/2}\left(\frac{r_2+\sigma}{r_1-\sigma}\right)^{d/2}
\left(\frac{1+\left(\frac{2(r_2+\sigma)}{\tau-2(r_2+\sigma)}\right)^2}{1-\left(\frac{r_1-\sigma}{2\tau}\right)^2}\right)^{d/4}.$
\end{lemma}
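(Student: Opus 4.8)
The plan is to write $c:=\mb E Z$, abbreviate $T:=T_y\m M$ for the tangent space at the point $y$ supplied by Lemma \ref{lem:evalLB}, and to compare the empirical principal subspace $V_d$ with $T$. Since $\Id-\proj{}_{V_d}$ and $\Id-\proj{}_{T}$ differ by $\proj{}_{T}-\proj{}_{V_d}$, the triangle inequality gives, for every $x$ in the support of the conditional law of $Z$ (which is contained in $\m M_\sigma\cap Q\subseteq B(y,r_2)$, the effective range of the supremum),
\[
\left\|x-c-\proj{}_{V_d}(x-c)\right\|\leq \left\|\proj{}_{T^\perp}(x-c)\right\|+\left\|\proj{}_{V_d}-\proj{}_{T}\right\|\cdot\|x-c\|.
\]
The first summand is a purely geometric \emph{deviation from the tangent plane}: writing each support point as $p+\xi$ with $p=\proj{}_\m M(\cdot)\in\m M$ and $\|\xi\|\leq\sigma$, Lemma \ref{lem:lengths} controls the normal component of $p-y$ by $2r_2^2/\tau$ while $\|\proj{}_{T^\perp}\xi\|\leq\sigma$; applying this to $x$ and (through Jensen's inequality) to $c=\mb E Z$ yields a bound of the form $2\sigma+4r_2^2/\tau$. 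The prefactor $\|x-c\|$ is at most $2r_2$, since $x$ and $c$ both lie in the convex set $B(y,r_2)$.

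Everything then reduces to controlling the largest principal angle $\left\|\proj{}_{V_d}-\proj{}_{T}\right\|=\left\|\proj{}_{V_d^\perp}\proj{}_{T}\right\|$ between $V_d$ and $T$, and this is where the two eigenvalue estimates enter. Lemma \ref{lem:evalUB} bounds the off-tangent variance $\rho^2:=\mb E\|\proj{}_{T^\perp}(Z-c)\|^2\leq 2\sigma^2+8r_2^4/\tau^2$, whence both $\|\proj{}_{T^\perp}\Sigma\proj{}_{T^\perp}\|\leq\rho^2$ and $\lambda_{d+1}(\Sigma)\leq\rho^2$; Lemma \ref{lem:evalLB} lower bounds the tangential eigenvalue $\mu_d:=\lambda_d(\proj{}_T\Sigma\proj{}_T)$ (its proof estimates the variance of $Z$ inside the tangent directions, which is exactly $\mu_d$). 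The plan is to fix a unit $u\in T$ realizing the angle, set $g:=\proj{}_{V_d^\perp}u$ and $b:=\|g\|$, and use $b^2=\dotp{u}{g}=\dotp{u}{\proj{}_T g}\leq\|\proj{}_T g\|$ (the first equality because $\proj{}_{V_d^\perp}$ is an orthogonal projection, the second because $u\in T$). Applying $\Sigma^{1/2}$ and the tangential Rayleigh bound $\|\Sigma^{1/2}\proj{}_T g\|\geq\sqrt{\mu_d}\,\|\proj{}_T g\|$, together with $\|\Sigma^{1/2}g\|\leq\sqrt{\lambda_{d+1}(\Sigma)}\,b\leq\rho b$ (as $g\in V_d^\perp$) and $\|\Sigma^{1/2}\proj{}_{T^\perp}g\|\leq\rho\,\|\proj{}_{T^\perp}g\|\leq\rho b$, one obtains $\sqrt{\mu_d}\,b^2\leq\|\Sigma^{1/2}\proj{}_T g\|\leq 2\rho b$, so that $\left\|\proj{}_{V_d^\perp}\proj{}_{T}\right\|\leq 2\rho/\sqrt{\mu_d}$.

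Assembling the pieces, the second summand is at most $2r_2\cdot 2\rho/\sqrt{\mu_d}=4r_2\rho/\sqrt{\mu_d}$. Substituting the explicit bound of Lemma \ref{lem:evalUB} (which turns $\sqrt{2}\,\rho$ into $\sqrt{4\sigma^2+16r_2^4/\tau^2}$) and the explicit lower bound of Lemma \ref{lem:evalLB} for $\mu_d$ (which produces exactly the factor $\gamma(\sigma,\tau,d,r_1,r_2)$ together with the prefactor $r_2/(r_1-\sigma)$) reproduces the stated estimate, the first summand accounting for the $2\sigma+4r_2^2/\tau$ term.

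I expect the main obstacle to be the principal-angle step, i.e.\ converting the mean-square ($L_2(\Pi)$) control of the residual into an operator-norm control of $\left\|\proj{}_{V_d^\perp}\proj{}_T\right\|$. The difficulty is that $T$ is \emph{not} an eigenspace of $\Sigma$ and the tangential spectrum can be strongly anisotropic (the ratio $\lambda_1(\Sigma)/\mu_d$ may be as large as $(r_2/r_1)^2$), so a naive Davis--Kahan estimate routed through the off-diagonal block $\|\proj{}_{T^\perp}\Sigma\proj{}_T\|\leq\rho\sqrt{\lambda_1(\Sigma)}$ loses a spurious factor of order $r_2/r_1$ and fails to match $\gamma$. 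The self-improving inequality $\sqrt{\mu_d}\,b^2\leq 2\rho b$ is precisely what removes this loss, and setting it up (projecting the $V_d^\perp$-leakage back onto $T$ and playing the tangential variance lower bound against the off-tangent variance upper bound through $\Sigma^{1/2}$) is the delicate point, as is the bookkeeping ensuring that Lemma \ref{lem:evalLB} is invoked for the tangential eigenvalue $\mu_d$ rather than merely for $\lambda_d(\Sigma)\geq\mu_d$.
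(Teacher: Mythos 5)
Your proof is correct and reproduces the stated constants exactly, but the principal-angle step is executed by a genuinely different mechanism than the paper's. The overall skeleton coincides: the paper's decomposition (\ref{eq:app4}) is precisely your split into $\proj{}_{T^\perp}(x-c)$ plus $(\proj{}_{T_y\m M}-\proj{}_{V_d})(x-c)$, and like you it must lean on the proof-internal facts of Lemmas \ref{lem:evalUB} and \ref{lem:evalLB} rather than their statements: (\ref{eq:f01})--(\ref{eq:f02}) give the normal-variance bound $\rho^2\leq 2\sigma^2+8r_2^4/\tau^2$, and (\ref{eq:eig_lb}) holds for \emph{every} unit $u\in T_y\m M$, hence lower-bounds your tangential eigenvalue $\mu_d=\min_{u\in T_y\m M,\|u\|=1}u^T\Sigma u$ and not merely $\lambda_d(\Sigma)\geq\mu_d$ --- so the bookkeeping worry you flagged is real but resolves exactly as you anticipated. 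Where the routes diverge is the angle bound itself: instead of your extremal tangent vector $u$, its leakage $g=\proj{}_{V_d^\perp}u$, and the self-improving inequality $\sqrt{\mu_d}\,b^2\leq\|\Sigma^{1/2}\proj{}_{T_y\m M}g\|\leq\|\Sigma^{1/2}g\|+\|\Sigma^{1/2}\proj{}_{T_y^\perp\m M}g\|\leq 2\rho b$, the paper picks a unit $u_\ast\in V_d^\perp$ whose tangential component has norm at least $\alpha=\sin\phi$ and sandwiches the single Rayleigh quotient $\mb E\dotp{u_\ast}{Z-\mb EZ}^2$: from below by roughly $\tfrac{\alpha^2}{2}\mu_d-\rho^2$ via the elementary inequality $(a+b)^2\geq a^2/2-b^2$ (re-entering the integral computation of Lemma \ref{lem:evalLB}), and from above by $\rho^2$ since $u_\ast\perp V_d$. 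Both yield $\sin\phi\leq 2\rho/\sqrt{\mu_d}$ --- the two arguments are dual, one measuring the tangential mass of a $V_d^\perp$-vector, the other the $V_d^\perp$-leakage of a tangent vector --- so the factor $\gamma$ comes out identically, and your worry about a Davis--Kahan route losing a factor $r_2/r_1$ is well-founded, which is exactly why both proofs bypass it. Your assembly is in fact slightly cleaner: after (\ref{eq:app4}) the paper bounds $(\proj{}_{T_y\m M}-\proj{}_{V_d})(x-\mb EZ)$ by splitting off two additional normal-deviation terms, which read literally gives $4\sigma+8r_2^2/\tau$ rather than the stated $2\sigma+4r_2^2/\tau$, whereas your direct estimate $\|\proj{}_{T_y\m M}-\proj{}_{V_d}\|\cdot\|x-c\|\leq 2r_2\,b$ avoids this double counting and matches the lemma verbatim; and like the paper you correctly read the supremum as effectively over $Q\cap\m M_\sigma$, which is all that is used downstream.
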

Notice that the term containing $\gamma(\sigma,\tau,d,r_1,r_2)$ is often of smaller order, so that the approximation is essentially controlled by the maximum of $\sigma$ and $\frac{r_2^2}{\tau}$.

\subsubsection{Putting all the bounds together}
In this final subsection, we prove Theorem \ref{thm:MFD}. 
We begin by translating Proposition 3.2 in \citep{Niyogi:homology} into our setting. 
As before, let $\m X_n=\{X_1,\ldots,X_n\}$ be an i.i.d. sample from $\Pi$, and the $\phi_1$ be the constant defined by (\ref{eq:tau-sigma}).   

\begin{proposition}\citep[Proposition 3.2]{Niyogi:homology}
\label{prop:net}
Suppose $0<\eps<\frac{\tau}{2}$, and also that $n$ and $t$ satisfy
\begin{align}
\label{eq:net_bound}
n\geq \eps^{-d}\frac{1}{\phi_1}\l(\frac{\tau+\sigma}{\tau-\sigma}\r)^d \beta_1\l(\log (\eps^{-d}\beta_2)+t\r),
\end{align}
where $\beta_1= \frac{\vol(\m M)}{\cos^d(\delta_1)\Vol(B_d(0,1/4))}$, $\beta_2= \frac{\vol(\m M)}{\cos^d(\delta_2)\Vol(B_d(0,1/8))}$, $\delta_1=\arcsin(\eps/8\tau)$, and $\delta_2=\arcsin(\eps/16\tau)$. Let ${\m E}_{\eps/2,n}$ be the event that 
$$
\m Y =\l\{Y_j=\proj{}_\m M(X_j)\r\}_{j=1}^n
$$ 
is $\eps/2$-dense in $\m M$
(that is, $\m M\subseteq \bigcup\limits_{i=1}^n B(Y_i,\eps/2)$). 
Then,  $\Pi^n({\m E}_{\eps,n})\geq 1-e^{-t}$, 
where $\Pi^n$ is the $n$-fold product measure of $\Pi$.
\end{proposition}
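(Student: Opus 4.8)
The plan is to adapt the classical covering argument of \citet{Niyogi:homology} (Proposition 3.2) to the tubular setting, relying on Lemma \ref{lem:volUB} to strip away the ambient-dimension-dependent factors. First I would fix a maximal $\eps/4$-separated subset $\{p_1,\dots,p_N\}\subset\m M$ (a packing); by maximality it is automatically an $\eps/4$-net of $\m M$, so every $x\in\m M$ lies within Euclidean distance $\eps/4$ of some $p_l$. The elementary observation driving the argument is that if each cap $\m M\cap B(p_l,\eps/4)$ contains at least one projected sample $Y_j=\proj{}_{\m M}(X_j)$, then the triangle inequality forces $\m Y$ to be $\eps/2$-dense: for arbitrary $x\in\m M$, pick $p_l$ with $\|x-p_l\|\le\eps/4$ and $Y_j\in B(p_l,\eps/4)$, whence $\|x-Y_j\|\le\eps/2$. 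Thus it suffices to bound the probability that some cap receives no sample.

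Next I would control the number of caps $N$ from above. Since the $p_l$ are $\eps/4$-separated, the balls $B(p_l,\eps/8)$ are pairwise disjoint, and hence so are the caps $\m M\cap B(p_l,\eps/8)$. Part \textit{v.}\ of Proposition \ref{prop:reach} (with $r=\eps/8<\tau$) places a $d$-ball of radius $(\eps/8)\cos\delta_2$, $\delta_2=\arcsin(\eps/16\tau)$, inside $\proj{}_{p_l+T_{p_l}\m M}(\m M\cap B(p_l,\eps/8))$; since the orthogonal projection of a $d$-dimensional set onto a $d$-plane has Jacobian determinant at most $1$ in absolute value, $\vol(\m M\cap B(p_l,\eps/8))\ge\cos^d(\delta_2)\Vol(B_d(0,\eps/8))$. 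Summing these disjoint caps and bounding the total by $\vol(\m M)$ gives $N\le\eps^{-d}\beta_2$ in the normalization of \eqref{eq:net_bound}.

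The heart of the proof is the uniform lower bound on the single-cap hitting probability $q_l:=\Pi\l(\proj{}_{\m M}^{-1}(\m M\cap B(p_l,\eps/4))\r)$. Here I would chain three inputs. (i) The density bound $\Pi\ge\phi_1\,\mathcal U_{\m M_\sigma}$ from the $(\tau,\sigma)$-model assumption. (ii) Part \textit{i.}\ of Lemma \ref{lem:volUB}, which both lower-bounds $\Vol$ of the preimage $\proj{}_{\m M}^{-1}(A_l)$ by $(1-\sigma/\tau)^d\vol(A_l)\Vol(B_{D-d}(0,\sigma))$ and, taking $U=\m M$, upper-bounds $\Vol(\m M_\sigma)$ by $(1+\sigma/\tau)^d\vol(\m M)\Vol(B_{D-d}(0,\sigma))$, so that the $(D-d)$-dimensional factors cancel in the ratio $\mathcal U_{\m M_\sigma}(\proj{}_{\m M}^{-1}(A_l))$. (iii) Proposition \ref{prop:reach} \textit{v.}\ with $r=\eps/4$ to get $\vol(A_l)\ge\cos^d(\delta_1)\Vol(B_d(0,\eps/4))$, $\delta_1=\arcsin(\eps/8\tau)$. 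Combining yields $q_l\ge\phi_1\l(\tfrac{\tau-\sigma}{\tau+\sigma}\r)^d\eps^{d}/\beta_1$, uniformly in $l$. The cancellation of $\Vol(B_{D-d}(0,\sigma))$ in step (ii) is precisely what renders the bound independent of $D$, and it is the one place where the tubular geometry (rather than Niyogi's on-manifold sampling) genuinely enters; I expect the bookkeeping here---matching the radii $\eps/4,\eps/8$ to the angles $\delta_1,\delta_2$ and tracking the $\eps^{-d}$ normalization---to be the most error-prone, though not conceptually deep, step.

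Finally a union bound closes the argument. With $q:=\min_l q_l$,
\[
\Pi^n(\m E_{\eps/2,n}^c)\le\sum_{l=1}^N(1-q_l)^n\le N\,e^{-nq}\le \eps^{-d}\beta_2\,e^{-nq}.
\]
Requiring $\eps^{-d}\beta_2\,e^{-nq}\le e^{-t}$, i.e.\ $nq\ge\log(\eps^{-d}\beta_2)+t$, and substituting the lower bound $q\ge\phi_1\l(\tfrac{\tau-\sigma}{\tau+\sigma}\r)^d\eps^{d}/\beta_1$ gives exactly the sample-size hypothesis \eqref{eq:net_bound}. Hence under that hypothesis $\Pi^n(\m E_{\eps/2,n})\ge1-e^{-t}$, as claimed.
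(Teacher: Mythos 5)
Your proposal is correct and takes essentially the same route as the paper: the paper's proof simply cites Niyogi's covering argument for the skeleton (packing centers, cap-count via $\delta_1,\delta_2$, union bound) and supplies the one new ingredient, namely the pushforward bound $\widetilde{\Pi}\l(\m M\cap B(y,\cdot)\r)\geq \phi_1\l(\frac{\tau-\sigma}{\tau+\sigma}\r)^d U_{\m M}\l(\m M\cap B(y,\cdot)\r)$ obtained from the $(\tau,\sigma)$-model assumption together with part \textit{i.} of Lemma \ref{lem:volUB}, with the $\Vol(B_{D-d}(0,\sigma))$ factors cancelling exactly as in your steps (i)--(iii). Your write-up merely unpacks the Niyogi skeleton explicitly, and the radii, angles, and normalizations all match \eqref{eq:net_bound}.
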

\begin{proof}
The proof closely follows the one given in  \citep{Niyogi:homology}. The only additional observation to make is that, if $\widetilde{\Pi}$ is the pushforward measure of $\Pi$ under $\proj{}_{\m M}:\m M_\sigma\rightarrow\m M$, then
\begin{align*}
\widetilde{\Pi}\l(\m M\cap B(y,\eps/8)\r)&=\Pi(\proj{}_{\m M}^{-1} (\m M\cap B(y,\eps/8)))\\
&\geq \phi_1 U_{\m M_\sigma}(\proj{}_{\m M}^{-1} (\m M\cap B(y,\eps/8)))\\
&= \phi_1 \widetilde{U}_{\m M_\sigma}(\m M\cap B(y,\eps/8))\\
&\geq \phi_1 \l(\frac{\tau-\sigma}{\tau+\sigma}\r)^d U_{\m M}(\m M\cap B(y,\eps/8)).
\end{align*}
by Lemma \ref{lem:volUB}.
\end{proof}

If $\eps\ll\tau$, previous proposition implies that we roughly need $n\geq {\rm Const}(\m M,d)\l(\frac 1 \eps\r)^d \log \frac 1 \eps$ points to get an $\eps$-net for $\m M$. 
For the remainder of this section, we identify $\eps:=\eps(n,t)$ with the smallest $\eps>0$ satisfying (\ref{eq:net_bound}) in the statement of Proposition \ref{prop:net}, and we also assume that $\eps < \sigma$. 
Take $j\in \mb Z_+$ such that 
\begin{align}
\label{eq:index}
\sigma< 2^{-j-2}<\tau.
\end{align} 
Let $C_{j,k}$ be the partition of $\mb R^D$ into Voronoi cells defined by (\ref{eq:part}). 
Recall that $T_j=\{a_{j,k}\}_{k=1}^{N(j)}\subset \m X_n$ is the set of nodes of the cover tree at level $j$, and set $z_{j,k}=\proj_\m M(a_{j,k})$.

\begin{lemma}
\label{lem:covering}
With probability $\geq 1-e^{-t}$, for all $j$ satisfying (\ref{eq:index}) and $k=1,\dots,N(j)$,
\begin{align}
\label{eq:inc}
B\left(z_{j,k}, 2^{-j-2}\right)\subseteq C_{j,k}\text{ and }C_{j,k}\cap \m M_\sigma\subseteq B(a_{j,k},3\cdot 2^{-j-2}+2^{-j+1})\subseteq B(z_{j,k},3\cdot 2^{-j}).
\end{align}
\end{lemma}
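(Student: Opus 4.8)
The plan is to establish the three inclusions of \eqref{eq:inc} separately, observing that only the middle one is genuinely probabilistic (it will invoke the net event $\m E_{\eps/2,n}$ of Proposition \ref{prop:net}), while the two outer inclusions are deterministic consequences of the cover tree axioms together with the elementary bound $\|a_{j,k}-z_{j,k}\|=\|a_{j,k}-\proj_\m M(a_{j,k})\|\leq\sigma$, valid because $a_{j,k}\in\m X_n\subseteq\m M_\sigma$. For the first inclusion $B(z_{j,k},2^{-j-2})\subseteq C_{j,k}$, I would take $y$ with $\|y-z_{j,k}\|<2^{-j-2}$ and use $\sigma<2^{-j-2}$ (from \eqref{eq:index}) to obtain $\|y-a_{j,k}\|<2^{-j-1}$. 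The cover tree separation property (property 3, giving $\|a_{j,k}-a_{j,k'}\|>2^{-j}$ for $k'\neq k$) then forces $\|y-a_{j,k'}\|>2^{-j}-2^{-j-1}=2^{-j-1}$ for every competing node, so $a_{j,k}$ is the unique nearest node and hence $y\in C_{j,k}$.

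For the second inclusion I would fix $x\in C_{j,k}\cap\m M_\sigma$ and set $w=\proj_\m M(x)\in\m M$. On the event $\m E_{\eps/2,n}$ (probability $\geq 1-e^{-t}$), $\eps/2$-density supplies an index $i$ with $\|w-Y_i\|<\eps/2$, where $Y_i=\proj_\m M(X_i)$, and Remark \ref{rem:2} supplies a level-$j$ node $a_{j,k'}$ with $\|X_i-a_{j,k'}\|<2^{-j+1}$. Chaining the triangle inequality through $x\to w\to Y_i\to X_i\to a_{j,k'}$ and bounding $\|x-w\|\leq\sigma$, $\|X_i-Y_i\|\leq\sigma$, and $\|w-Y_i\|<\eps/2<\sigma/2$ (using $\eps<\sigma$) yields $\|x-a_{j,k'}\|<\tfrac{5}{2}\sigma+2^{-j+1}<3\cdot 2^{-j-2}+2^{-j+1}$, where the final step again uses $\sigma<2^{-j-2}$. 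Since $x\in C_{j,k}$ makes $a_{j,k}$ the nearest level-$j$ node to $x$, I conclude $\|x-a_{j,k}\|\leq\|x-a_{j,k'}\|<3\cdot 2^{-j-2}+2^{-j+1}$, which is the desired membership.

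The third inclusion is then pure arithmetic: for $x$ in the ball $B(a_{j,k},3\cdot 2^{-j-2}+2^{-j+1})$, adding $\|a_{j,k}-z_{j,k}\|\leq\sigma<2^{-j-2}$ gives $\|x-z_{j,k}\|<4\cdot 2^{-j-2}+2^{-j+1}=3\cdot 2^{-j}$. The hard part is the second inclusion, where one must bridge the $\eps/2$-net living on $\m M$ (built from the projected samples $Y_i$) to the cover tree nodes living in the ambient tube $\m M_\sigma$, and the resulting chain accumulates the tube radius $\sigma$ three times over. Making this total fit inside the target radius is exactly what forces the scale constraints $\eps<\sigma<2^{-j-2}$ to be used sharply; it is also essential that a single net event for the fixed $\eps=\eps(n,t)$ simultaneously handles all admissible $j$, which works precisely because the cover tree axioms and these inequalities hold uniformly in $j$.
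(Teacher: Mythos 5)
Your proof is correct and takes essentially the same route as the paper's: the same separation-plus-$\sigma<2^{-j-2}$ Voronoi argument for the first inclusion, the same triangle-inequality chain $x\to\proj{}_{\m M}(x)\to Y_i\to X_i\to$ level-$j$ node on the event $\m E_{\eps/2,n}$ via Remark \ref{rem:2}, with the identical budget $\tfrac{5}{2}\sigma+2^{-j+1}<3\cdot 2^{-j-2}+2^{-j+1}$, and the same arithmetic for the third inclusion. The only difference is cosmetic: you spell out the nearest-node step ($x\in C_{j,k}$ implies $\|x-a_{j,k}\|\leq\|x-a_{j,k'}\|$) that the paper leaves implicit in ``since $z$ was arbitrary, the result follows.''
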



We now use Lemma \ref{lem:covering} to obtain bounds on the constants $\theta_i$ for $i=1,\ldots, 4$ and $\alpha$. We prove a lemma for each of the assumptions {\bf(A1)}, {\bf(A2)}, and {\bf(A3)} and then collect them as the proof of Theorem \ref{thm:MFD}.

\begin{proof}[Proof of Theorem \ref{thm:MFD}]
Since the hypotheses of Lemma \ref{lem:covering} are satisfied with high probability, we first obtain
\begin{align*}
\Pi(C_{j,k})&\geq \Pi(B(z_{j,k},2^{-j-2}))\\
&\geq \phi_1 U_{\m M_\sigma}(B(z_{j,k},2^{-j-2}))\\
&= \phi_1 \frac{\Vol(\m M_\sigma \cap B(z_{j,k},2^{-j-2}))}{\Vol(\m M_\sigma)}\\
&\geq \phi_1 \frac{\Vol(\proj{}_{\m M}^{-1}(\m M\cap B(z_{j,k},2^{-j-2}-\sigma)))}{\Vol(\m M_\sigma)}\\
&\geq \phi_1\l(\frac{\tau-\sigma}{\tau+\sigma}\r)^d\frac{\cos(\delta)^d\Vol(B_d(0,2^{-j-2}-\sigma))}{\vol(\m M)}\\
&\geq \frac{\phi_1\Vol(B_d(0,1))}{2^{4d}\vol(\m M)}\l(\frac{\tau-\sigma}{\tau+\sigma}\r)^d 2^{-jd}.
\end{align*}
where $\delta=\arcsin((2^{-j-2}-\sigma)/2\tau))$. Thus,
\[
\theta_1\geq \frac{\phi_1\Vol(B_d(0,1))}{2^{4d}\vol(\m M)}\l(\frac{\tau-\sigma}{\tau+\sigma}\r)^d
\]
Since the support is contained in a ball of radius $3\cdot 2^{-j}$, we easily obtain that $\theta_2\leq 12$. 
Finally, it is not difficult to deduce from Lemmas \ref{lem:evalUB} and \ref{lem:evalLB} that
\[
\theta_3\geq \frac{\phi_1/\phi_2}{2^{4d+8} \l(1+\frac{\sigma}{\tau}\r)^d},\: \theta_4\leq \left(2\vee \frac{2^3 3^4}{\tau^2}\right),\text{ and }\alpha=1.
\]
Lemma \ref{lem:sup-norm} together with Lemma \ref{lem:covering} imply that 
\[
\theta_5\leq \left(2\vee\frac{4\cdot 3^2}{\tau}\right)\left(1+ 3\cdot 2^5\sqrt{2d}\left(1+\frac{\sigma}{\tau}\right)^{d/2}
\left(\frac{1+\left(\frac{25}{71}\right)^2}{1-\frac{1}{9\cdot 2^{12}}}\right)^{d/4}\right).
\]
\end{proof}

\section{Numerical experiments}
\label{sec:numerical}

In this section, we present some numerical experiments consistent with our results.

\subsection{Spheres of varying dimension in $\mathbb{R}^D$}
\noindent We consider $n$ points $X_1,\dots,X_n$ sampled i.i.d. from the uniform distribution on the unit sphere in $\mathbb{R}^{d+1}$
$$
\m M=\mathbb{S}^d:=\{x\in\mathbb{R}^{d+1} : \Vert x\Vert=1\}\,.
$$
We then embed $\mathbb{S}^d$ into $\mathbb{R}^D$ for $D\in\{10,100\}$ by applying a random orthogonal transformation $\mathbb{R}^{d+1}\rightarrow\mathbb{R}^D$. 
Of course, the actual realization of this projection is irrelevant since our construction is invariant under orthogonal transformations. 
After performing this embedding, we add two types of noise. 
In the first case, we add Gaussian noise $\xi$ with distribution $\mathcal{N}(0,\frac{\sigma^2}D I_D)$: the scaling factor $\frac1D$ is chosen so that $\mathbb{E}\|\xi\|^2=\sigma^2$.
Since the norm of a Gaussian vector is tightly concentrated around its mean, this model is well-approximated by the ``truncated Gaussian'' model where the distribution of the additive noise is the same as the conditional distribution of $\xi$ given $\|\xi\|\leq C\sigma$, where $C$ is such that $C\sigma < 1$.   
In this case, the constants in $(1,C\sigma)$-model assumption would be prohibitively large, so instead we can verify the conditions given in Remark \ref{rmk:tau-sigma} directly: due to symmetry, we have that for any $A\subset \mathbb S^d$, 
\begin{align}
\label{eq:meas}
\Pi\l(\proj{}_{\m M}^{-1}(A)\r)= U_\m M(A)=U_{\m M_{\sigma}}\l(\proj{}_{\m M}^{-1}(A)\r).
\end{align}
On the other hand, it is a simple geometric exercise to show that, for any $B$ such that  
$B(y,r)\cap \m M_\sigma \subseteq B\subseteq B(y,12r)$ and $\tau/2=1/2>r\geq 2C\sigma$,
\[
\proj{}_{\m M}^{-1}\l(\m M\cap B(y,\tilde r_1)\r)\supseteq B\supseteq \proj{}_{\m M}^{-1}\l(\m M\cap B(y,\tilde r_2)\r),
\]
where $\tilde r_1=\frac{r}{\sqrt{\frac{1+\sqrt{1-r^2}}{2}}}$ and $\tilde r_2 =  r \sqrt{\frac{3}{4(1+C\sigma)}}$. 
Lemma \ref{lem:volUB} and (\ref{eq:meas}) imply that 
\begin{align*}
\Pi(B)&
\leq U_{\m M_{\sigma}}\l(\proj{}_{\m M}^{-1}\l(\m M\cap B(y,\tilde r_1)\r)\r) \\
&
\leq\left(1+C\sigma\right)^d \vol (\m M\cap B(y,\tilde r_1))\frac{\Vol(B_{D-d}(0,C\sigma))}{\Vol(\m M_{C\sigma})}
\end{align*}
and 
\begin{align*}
U_{\m M_\sigma}(B)&
\geq U_{\m M_\sigma}\l( \proj{}_{\m M}^{-1}\l(\m M\cap B(y,\tilde r_2)\r) \r) \\
&
\geq
\left(1-C\sigma\right)^d\vol(\m M\cap B(y,\tilde r_2))\frac{\Vol(B_{D-d}(0,C\sigma))}{\Vol(\m M_{C\sigma})},
\end{align*}
hence $\Pi(B)\leq \tilde \phi_2 U_{\m M_\sigma}(B)$ for some $\tilde\phi_2$ independent of the ambient dimension $D$.

We present the behavior of the $L^2(\Pi)$ error in this case in Figure \ref{f:spherenormalnoise}, and the rate of approximation at the optimal scale as the number of samples varies in Figure \ref{f:sphereoptimalMSE}, where it is compared to the rates obtained in Corollary \ref{cor:noiseless}. 
From Figure \ref{f:spherenormalnoise}, we see that the approximations obtained satisfy our bound, and are typically better even for a modest number of samples in dimensions non-trivially low (e.g. $8000$ samples on $\mathbb{S}^8$). 
In fact, the robustness with respect to sampling is such that the plots barely change from row to row.

The second type of noise is uniform in the radial direction, i.e. we let $\eta\sim\mathrm{Unif}[1-\sigma,1+\sigma]$ and each noisy point is generated by $\tilde X_i=X_i+\eta_i \frac{X_i}{||X_i||}$. 
This is an example where the noise is not independent of $X$. 
Once again, it is easy to check directly that conditions of Remark \ref{rmk:tau-sigma} hold (the argument mimics the approach we used for the truncated Gaussian noise). 
Simulation results for this scenario are summarized in Figure \ref{f:sphereunifnoise}, with the rate of approximation at the optimal scale again in Figure \ref{f:sphereoptimalMSE}.

\begin{figure}[ht!]
\begin{center}
\includegraphics[width=0.32\textwidth]{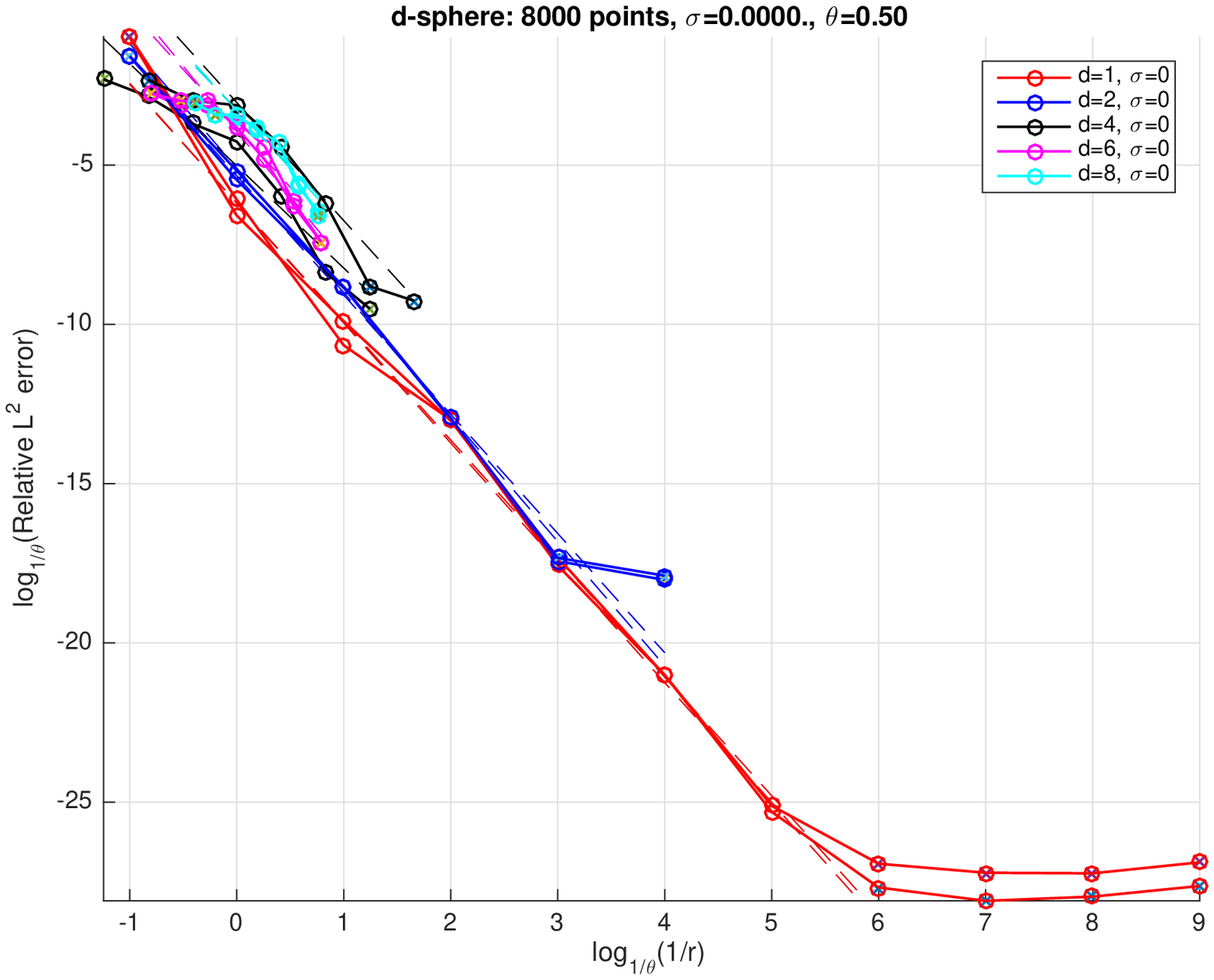}
\includegraphics[width=0.32\textwidth]{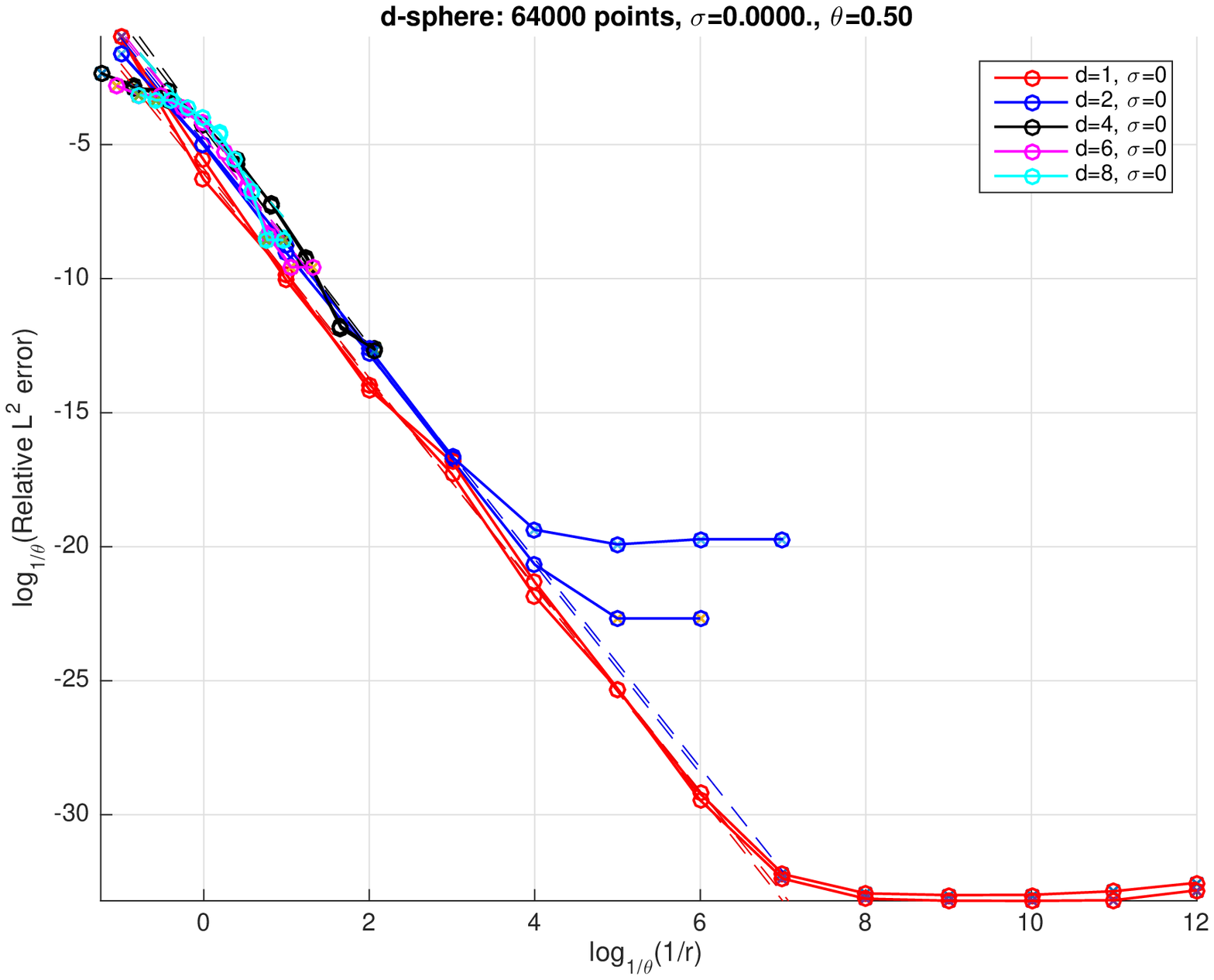}
\includegraphics[width=0.32\textwidth]{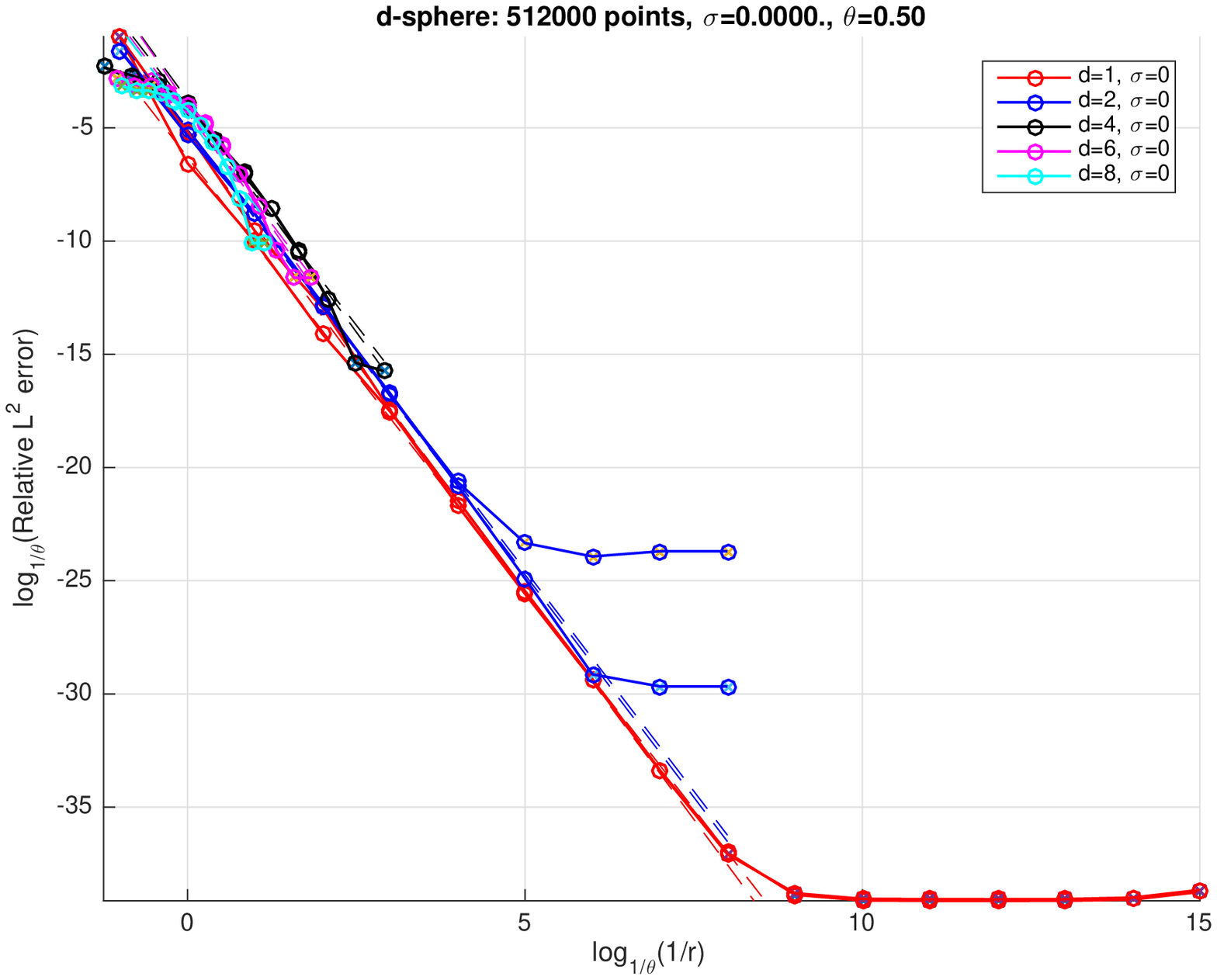}
\includegraphics[width=0.32\textwidth]{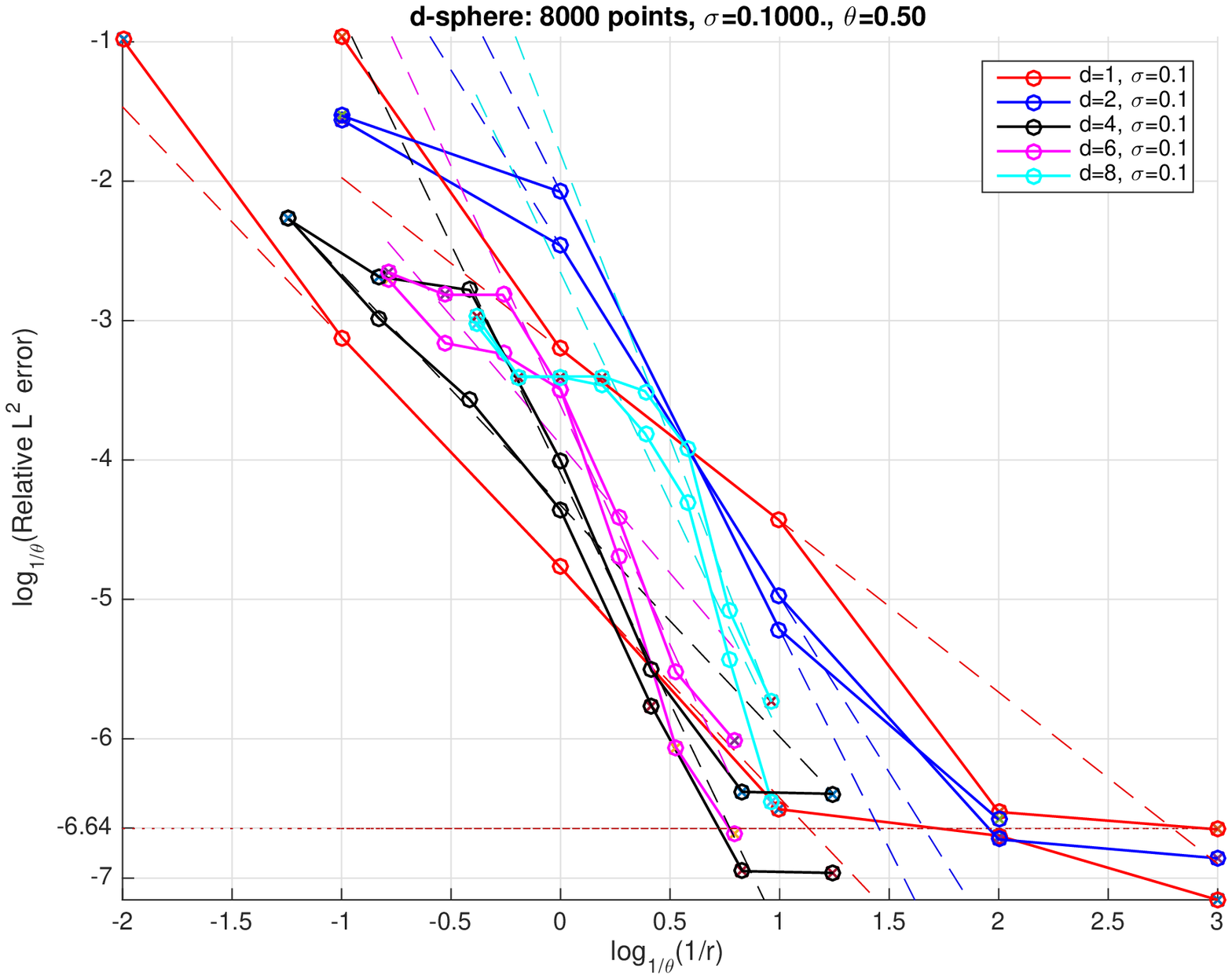}
\includegraphics[width=0.32\textwidth]{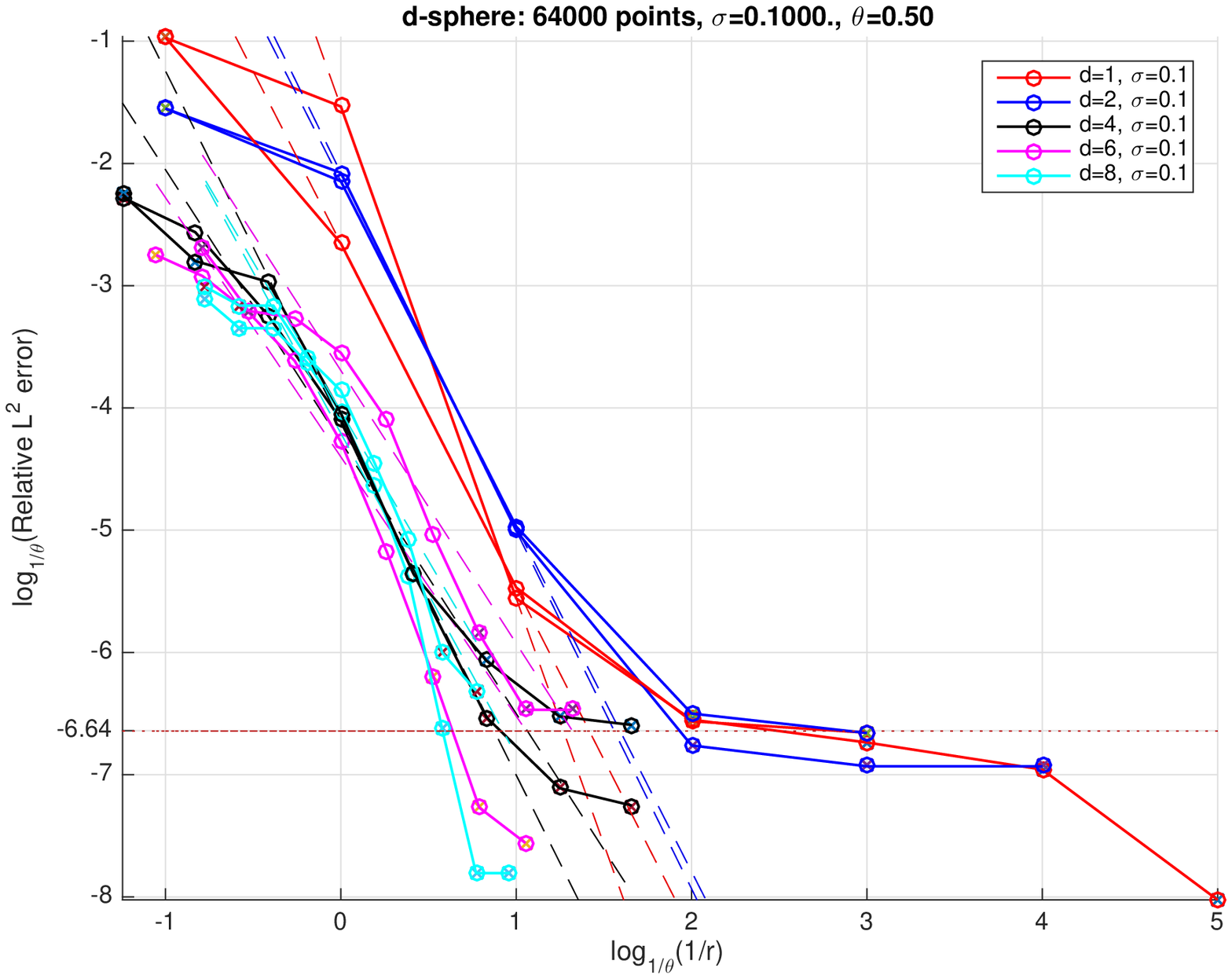}
\includegraphics[width=0.32\textwidth]{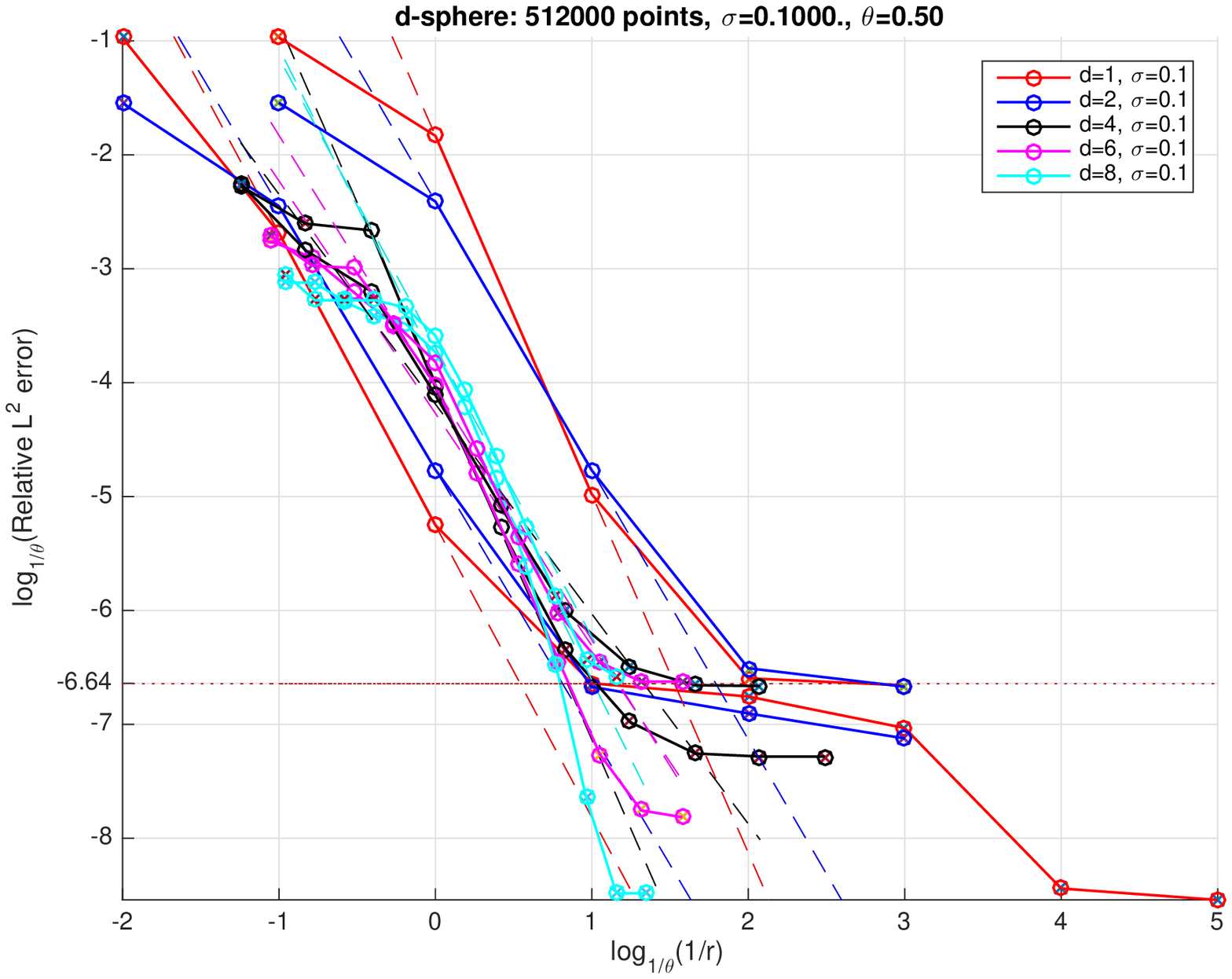}
\end{center}
\caption
{\small Experiment with $\mathbb{S}^d$, without (top row) and with Gaussian noise (bottom row). The columns correspond to different values of $n\in\{8000,64000,512000\}$. In the plots the dots represent the $L^2(\Pi)$ error squared (or MSE) of GMRA approximations (see \eqref{e:errdef}) as a function of the radius $r$ at scale $j$; more precisely the abscissa is in terms of $\log_{2}(1/r_j)$, where $r_j$ is the mean radius of $C_{j,k}$ for a fixed $j$, and the ordinate is $\log_{2}\text{MSE}_j$, where $\text{MSE}_j$ is the mean squared error of the GMRA approximation at scale $j$. Different colors correspond to different intrinsic dimensions $d$ (see legend). The two cases $D=10,100$ use the same colors for both the dots and the lines, all of which are essentially superimposed since our results are independent of the ambient dimension $D$. For each dimension we fit a line to measure the decay, which is $O(r^{-4})$ independently of $d$, consistently with our analysis. The horizontal dotted line, with corresponding tick mark on the $Y$ axis, represents the noise level $\sigma^2$: the approximation error flattens out at roughly that level, as expected.
}
\label{f:spherenormalnoise}
\end{figure}

We considered various settings of the parameters, namely all combinations of: $d\in\{1,2,4,6,8\}$, $n\in\{8000,16000,32000,64000,128000\}$, $D\in\{100,1000\}$, $\sigma\in\{0,0.05,0.1\}$.  We only display some of the results for reasons of space constraints.
\footnote{The code provided at \url{www.math.duke.edu/~mauro/code.html} can generate all the figures, re-create the data sets, and is easily modified to do more experiments.}

\begin{figure}[ht!]
\begin{center}
\includegraphics[width=0.32\textwidth]{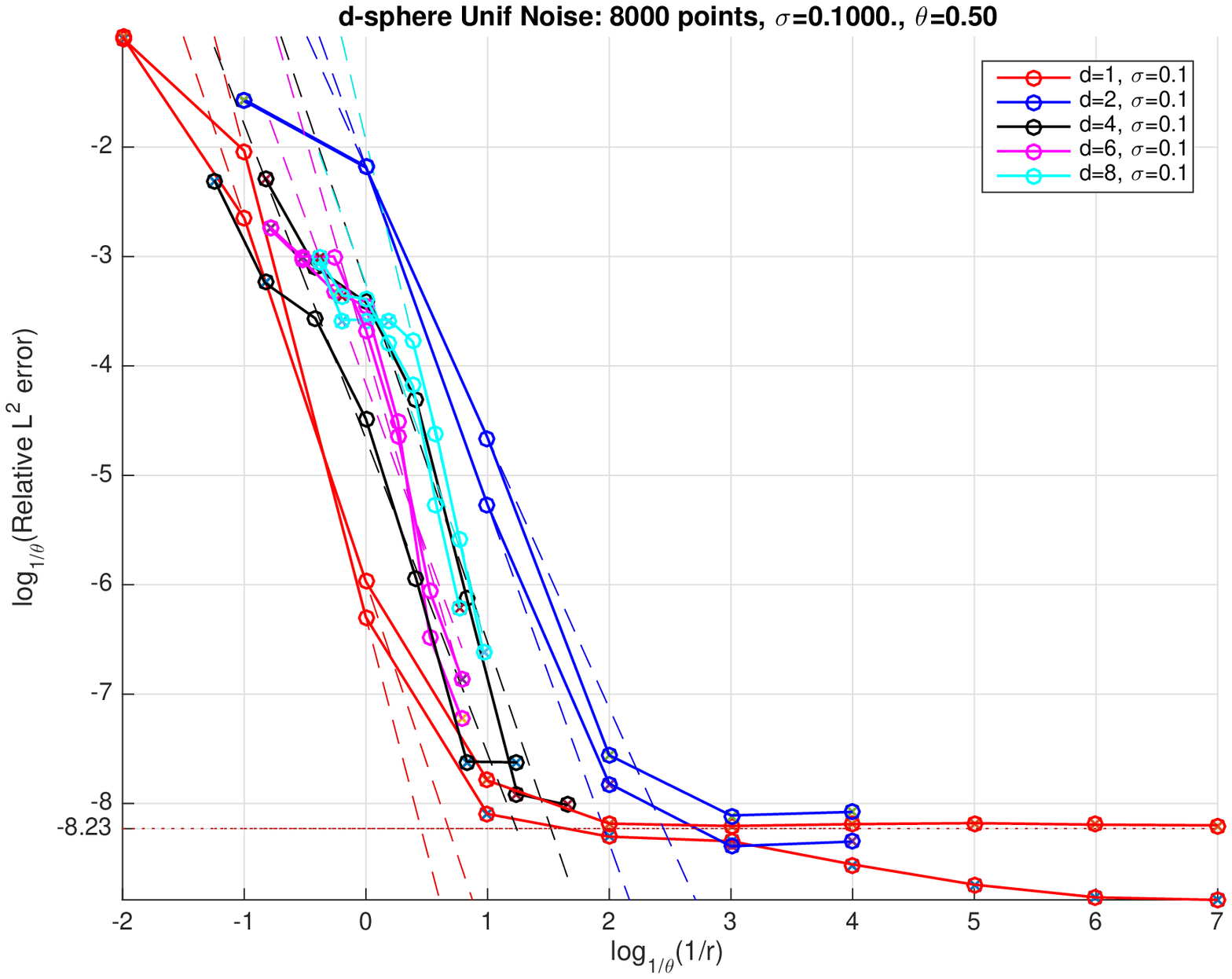}
\includegraphics[width=0.32\textwidth]{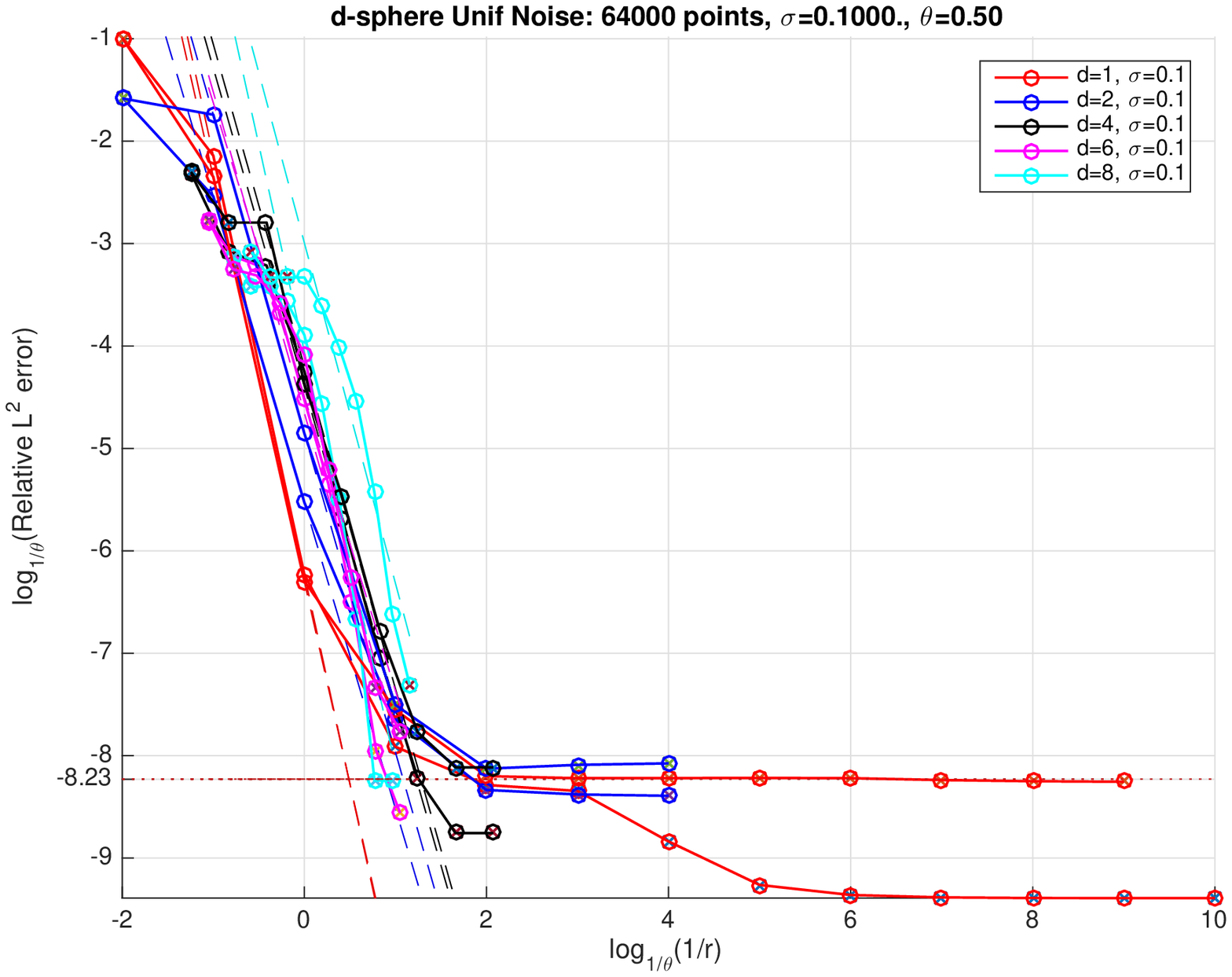}
\includegraphics[width=0.32\textwidth]{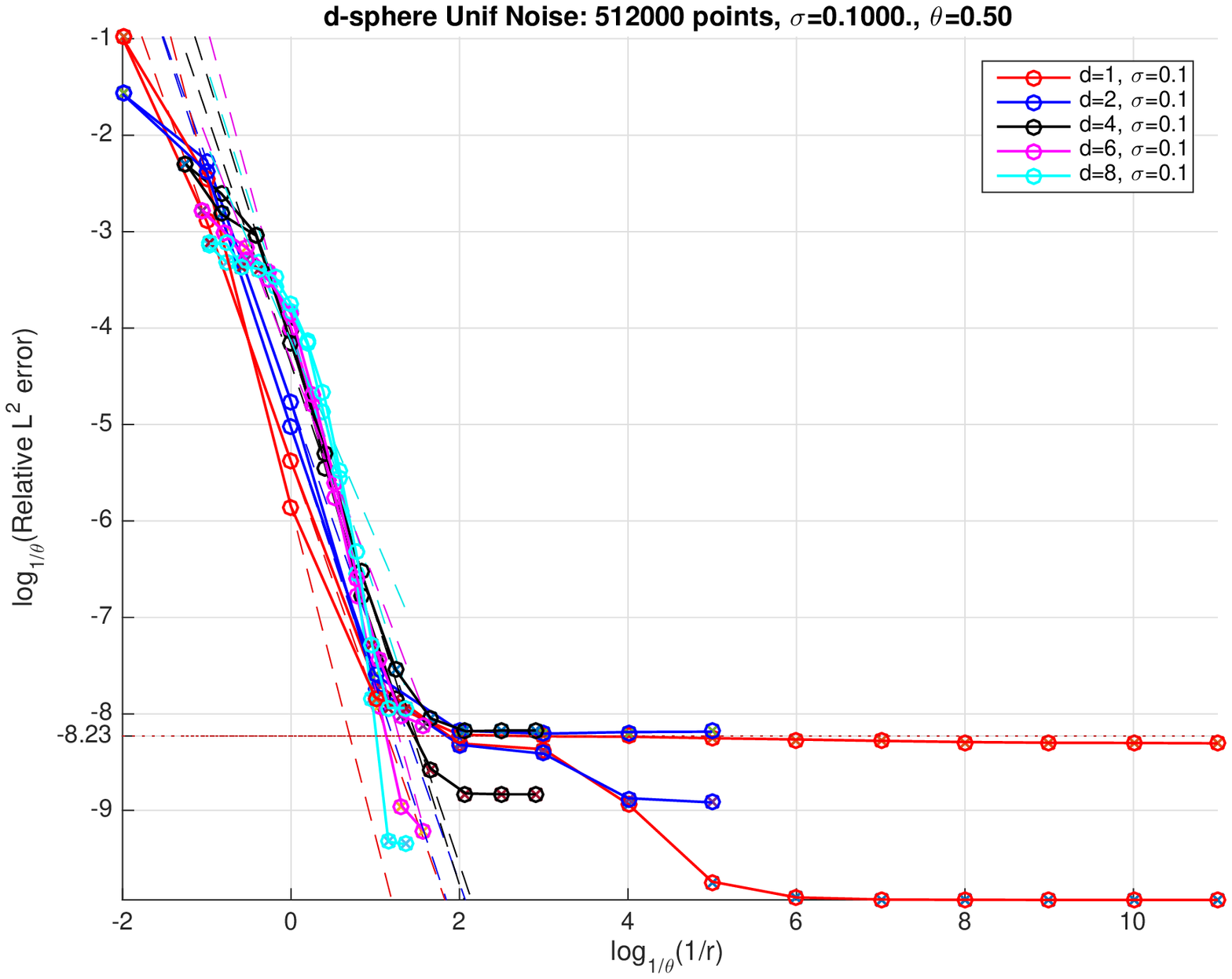}
\end{center}
\caption{\small This figure is as the second row of Figure \ref{f:spherenormalnoise}, but the noise is radially uniform with widht parameter $\sigma$. Note that the variance of the noise is $\sigma^2/3$, which is indicated in the figure by horizontal line and an extra tick mark on the $Y$-axis in the figures. The MSE converges quickly to that level as a function of scale.}
\label{f:sphereunifnoise}
\end{figure}

\begin{figure}[ht!]
\begin{center}
\includegraphics[width=0.32\textwidth]{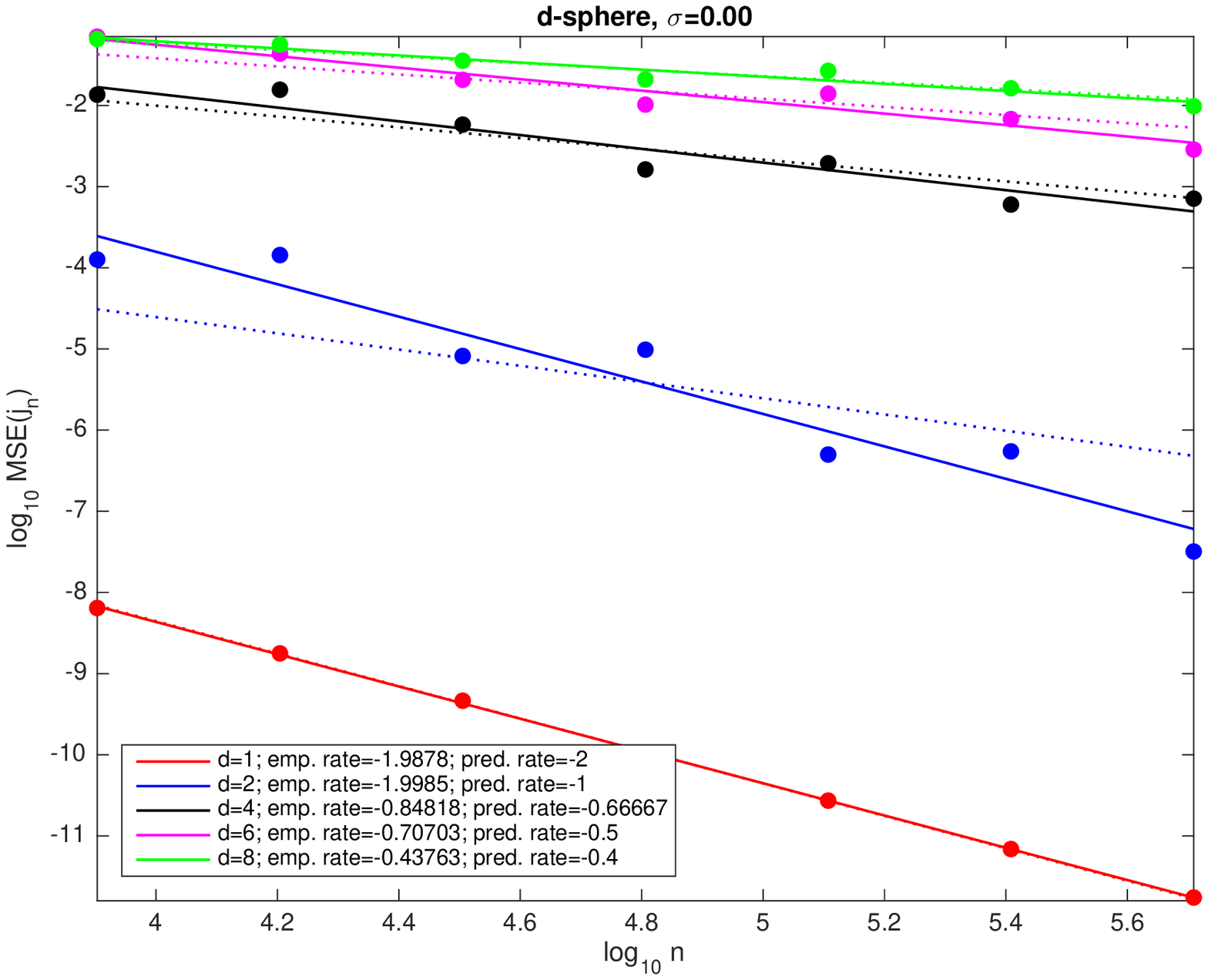}
\includegraphics[width=0.32\textwidth]{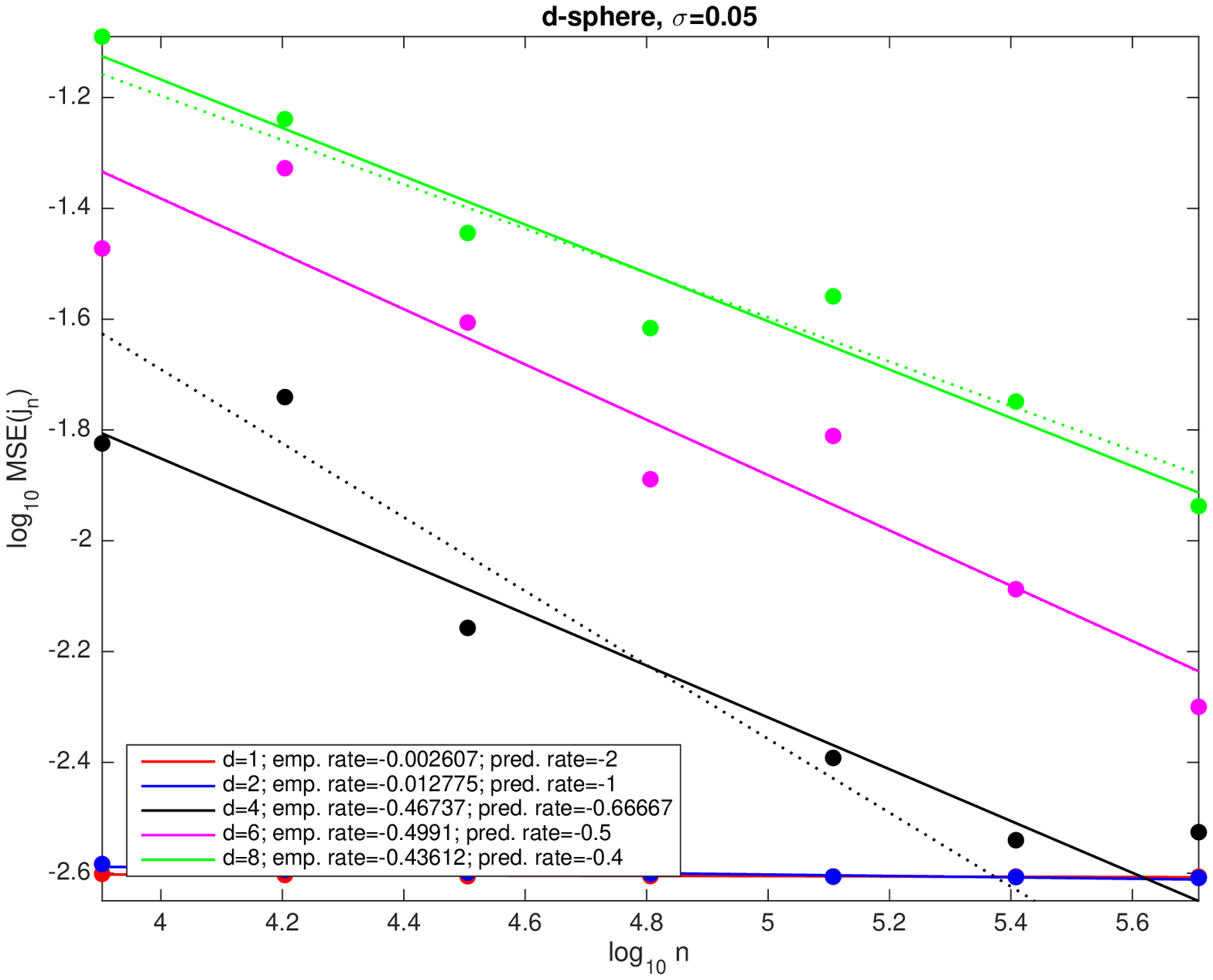}
\includegraphics[width=0.32\textwidth]{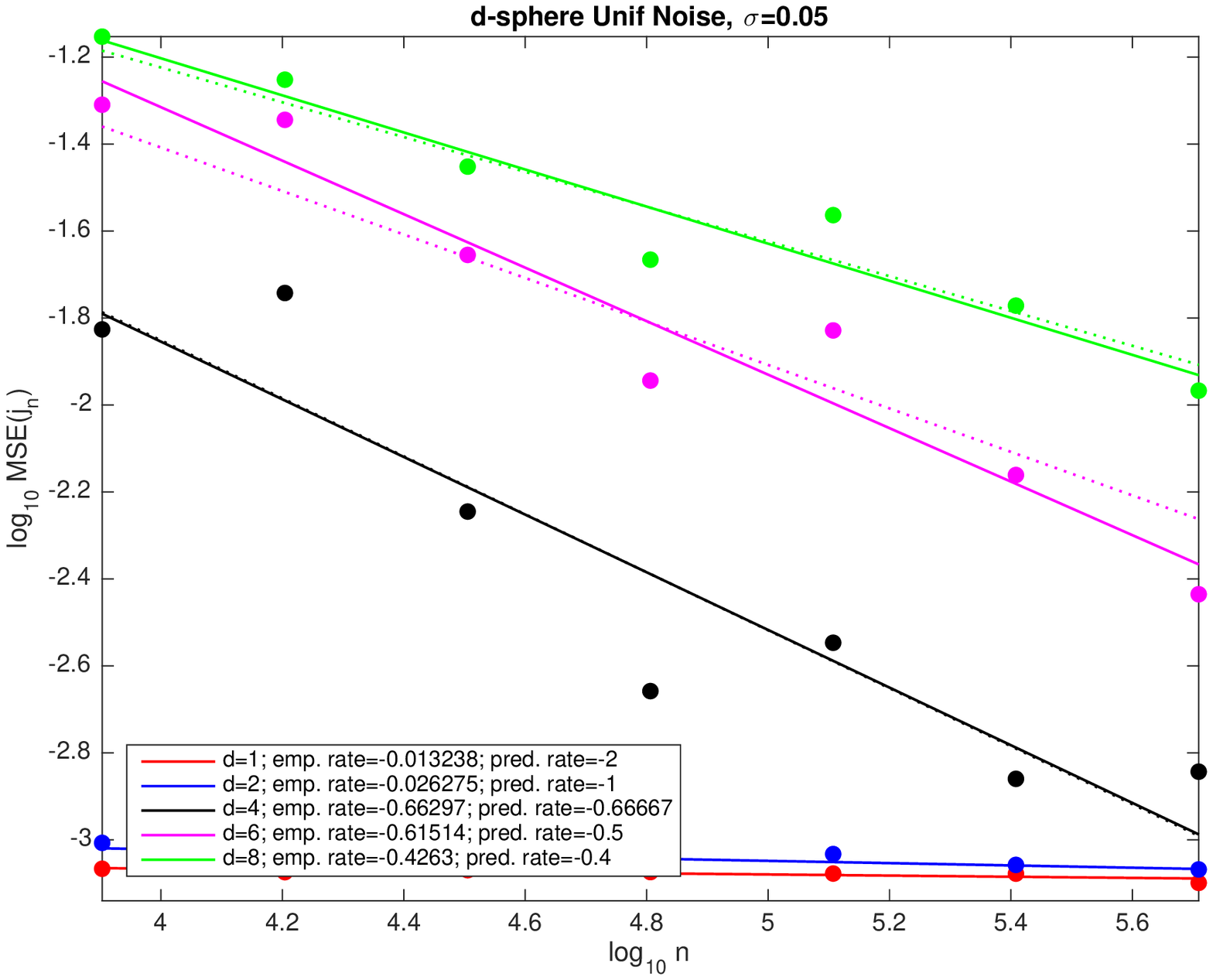}
\end{center}
\caption{For the example of $\mathbb{S}^d$ considered in this section we consider the MSE error, i.e. $L^2(\Pi)$ squared error (as defined in \eqref{e:errdef}) at the optimal scale $j_n$ (as in the proof of Corollary \ref{cor:noiseless}) as a function of the number of points $n\in\{8000,16000,32000,64000,128000,256000,512000\}$, and compare our empirical rates (solid linear, with the rate reported in the legend under ``emp. rate'') with the rates predicted by Corollary \ref{cor:noiseless} (dotted lines, with rate reported in the legend under ``pred. rate''), for various choices of the intrinsic dimension $d\in\{1,2,4,6,8\}$ and fixed ambient dimension $D=10$ (the results are independent of $D$, so we do note report the - very similar - results obtained for $D=100$). Left: noiseless case, middle: Gaussian noise, right: radial uniform noise (see text). The rates match our results quite well, except in the case $d=2$ where we seem to obtain the same convergence rate as in the $d=1$ case. Here we are choosing the optimal scale to be the finest scale such that, in every cell, we have at least $10d^2$ points. For the noisy cases, the approximation rates for $d=1,2$ are not meaningful simply because we have enough points to go the finest scale above the noise level.}
\label{f:sphereoptimalMSE}
\end{figure}

\subsection{Meyer's staircase}
We consider the ($d$-dimensional generalization of) Y. Meyer's staircase. 
Consider the cube $Q=[0,1]^d$ and the set of Gaussians $\mathcal{N}(x;\mu,\delta^2 I_d)$ where the mean $\mu$ is allowed to vary over $Q$, and the function is truncated to only accept arguments $x\in Q$. 
Varying $\mu$ in $Q$ in this manner induces a smooth embedding of a $d$-dimensional manifold into the infinite dimensional Hilbert space $L^2(Q)$. 
That is, the Gaussian density centered at $\mu\in Q$ and truncated to $x\in Q$ is a point in $L^2(Q)$. 
By discretizing $Q$, we may sample this manifold and project it into a finite dimensional space. In particular, a grid $\Gamma_D\subseteq Q$ of $D$ points (obtained by subdividing in $D^{-\frac1d}$ equal parts along each dimension) may be generated and considering the evaluations of the set of translated Gaussians on this grid produces an embedding of this manifold into $\mathbb{R}^D$. Sampling $n$ points from this manifold by randomly drawing $\mu_1,\dots, \mu_n$ uniformly from $Q$, we obtain a set $\{\mathcal{N}(x;\mu_i,\delta^2I_d)|_{\Gamma_D}\}_{i=1,\dots,n}$ of $n$ samples from the ``discretized'' Meyer's staircase in $\mathbb{R}^D$. 
This is what we call a sample from Meyer's staircase, which is illustrated in Figure \ref{f:meyer_example}. This example is not artificial: for example, translating a white shape on a black background produces a set of $2-D$ images with a similar structure to the $d$-dimensional Meyer's staircase for $d=2$.

\begin{figure}[ht!]
\begin{minipage}[c]{0.57\textwidth}
\includegraphics[width=\textwidth]{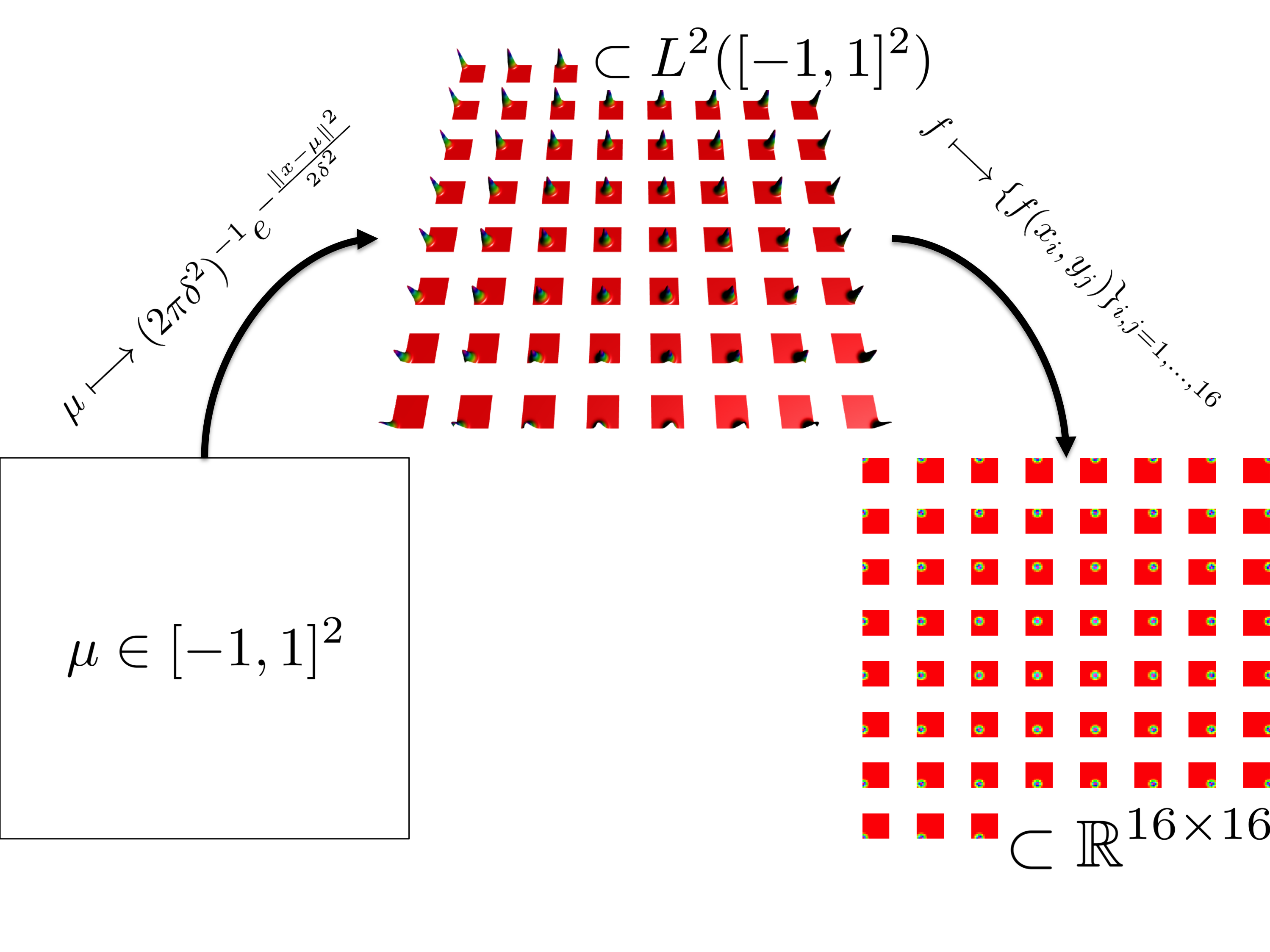}
\end{minipage}
\hfill
\begin{minipage}[c]{0.4\textwidth}
\caption{An illustration of Meyer's staircase for $d=2$. We see that the square is mapped into a subset of $L^2([-1,1]^2)$ consisting of truncated Gaussians. These are then sampled at points on a uniform, $16$ by $16$ grid to obtain an embedding of $[0,1]^2$ into $\mathbb{R}^{16\times 16}$. For small $\delta$, this embedding has a point very close to each coordinate axis in $\mathbb{R}^{16\times 16}$. Thus, it comes as no surprise that this embedding of $[-1,1]^2$ into $\mathbb{R}^{256}$ has a high degree of curvature.}
\label{f:meyer_example}
\end{minipage}
\end{figure}

The manifold associated with  Meyer's staircase is poorly approximated by subspaces of dimension smaller than $O(D\wedge 1/\delta^D)$, and besides spanning many dimensions in $\mathbb{R}^D$, it has a small reach, depending on $d,D,\delta$. In our examples we considered 
\[
n=8000,16000,32000,640000,128000, \  d=1,2,4, \ D=2000, \text { and } \delta=\frac5{100}.
\] 
We consider the noiseless case, as well as the case where Gaussian noise $\mathcal{N}(0,\frac{1}{D}I_D)$ is added to the data. Since this type of noise does not abide by the $(\sigma,\tau)$-model assumption and $\tau$ is very small for Meyer's staircase, Figure \ref{f:meyer} illustrates the behavior of the GMRA approximation outside of the regime where our theory is applicable.

\begin{figure}[ht!]
\begin{center}
\includegraphics[width=0.32\textwidth]{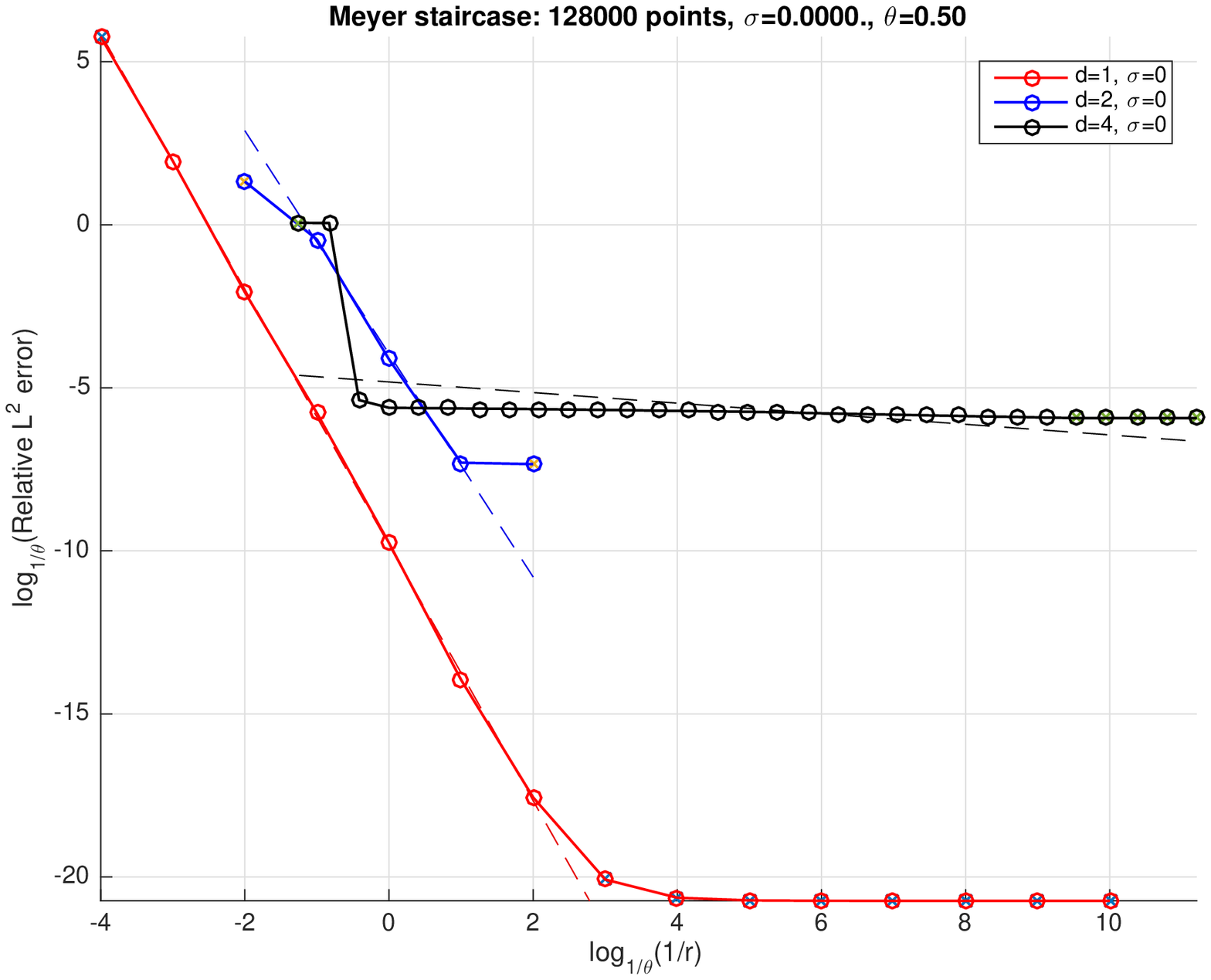}
\includegraphics[width=0.32\textwidth]{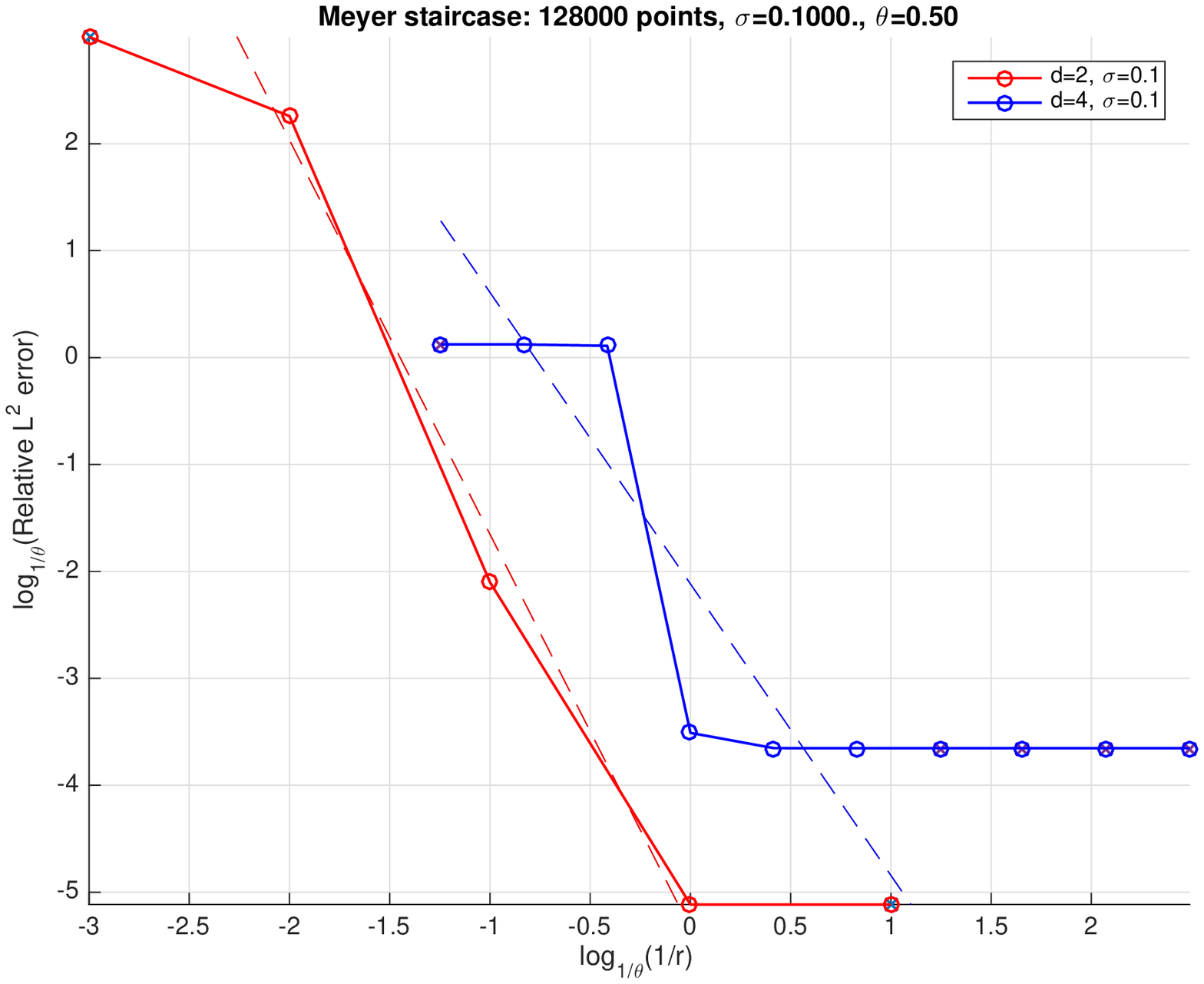}
\includegraphics[width=0.32\textwidth]{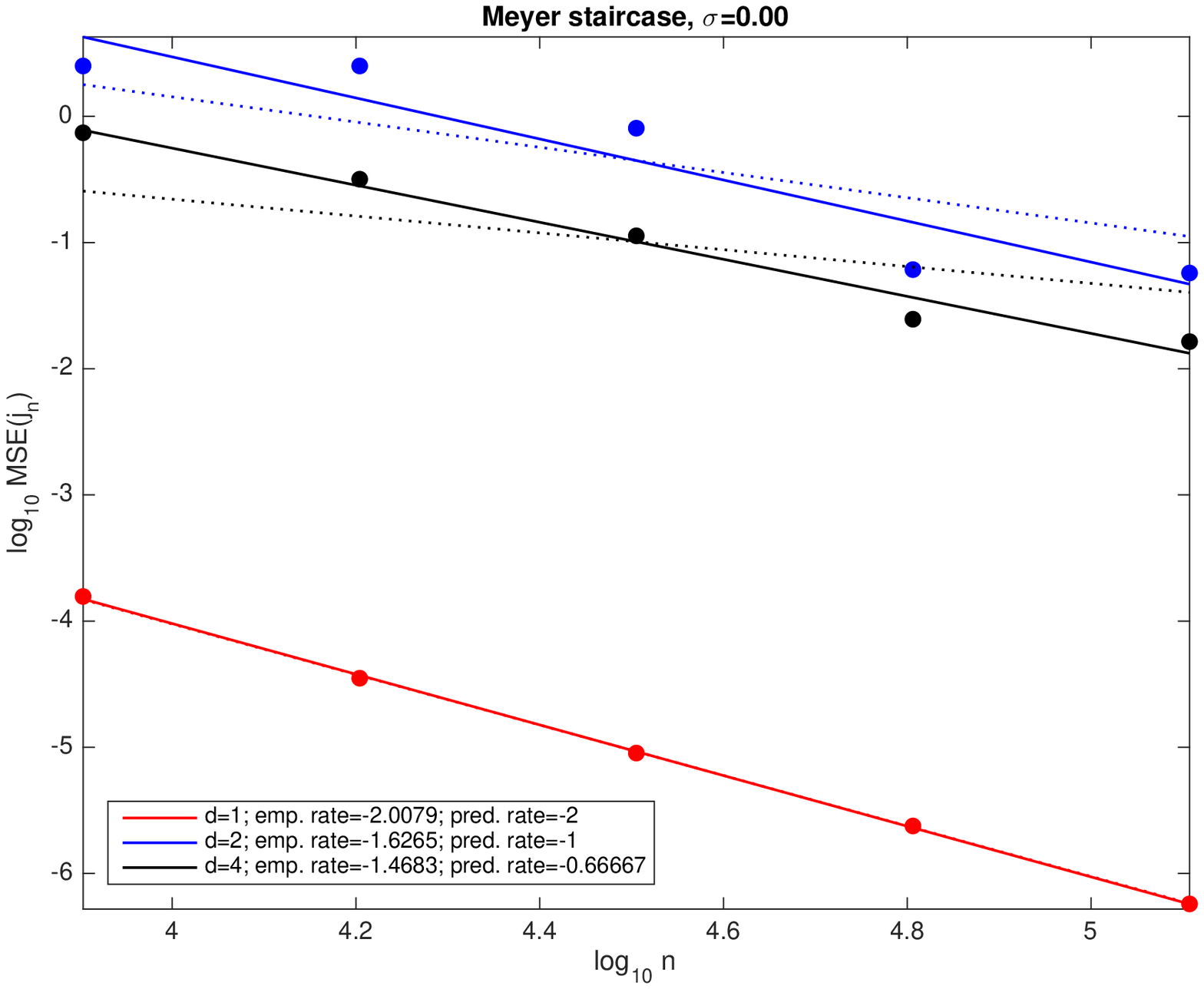}
\end{center}
\caption{Left and middle: MSE as a function of scale $r$ for the $d$-dimensional Meyer's staircase, for different values of $n=$, $d$ and $\sigma$,  standard deviation of Gaussian noise $\mathcal{N}(0,\frac{\sigma^2}{D})$. The small reach of Meyer's staircase makes it harder to approximate, and makes the approximation much more susceptible to noise. Moreover, Gaussian noise is unbounded, so this distribution violates the $(\sigma,\tau)$-model assumption (albeit only at a small number of points, with high probability). Right: MSE at the optimal scale, chosen so that every cell contains at least $10d^2$ points.}\label{f:meyernormalnoise}
\label{f:meyer}
\end{figure}

\subsection{The MNIST dataset of handwritten digits}
We consider the MNIST data set of images of handwritten digits\footnote{Available at \url{http://yann.lecun.com/exdb/mnist/.}}, each of size $28\times 28$, grayscale. There are total of $60,000$, from ten classes consisting of digits $0,1,\dots,9$.
The intrinsic dimension of this data set is variable across the data, perhaps because different digits have a different number of ``degrees of freedom'' and across scales, as it is observed in \cite{LMR:MGM1}.
We run GMRA by setting the cover tree scaling parameter $\theta$ equal to $0.9$ (meaning that we replace $1/2$ with $0.9$ in definition of cover trees in section \ref{sec:case}) in order to slowly ``zoom" into the data at multiple scales. As the intrinsic dimension is not well-defined, we set GMRA to pick the dimension of the planes $\mathbb{V}_{j,k}$ adaptively, as the smallest dimension needed to capture half of the ``energy'' of the data in $C_{j,k}$.  The distribution of dimensions of the subspaces $\mathbb{V}_{j,k}$ has median $3$ (consistently with the estimates in \cite{LMR:MGM1}) and is represented in figure \ref{f:MNIST_histdim}. We then compute the $L^2$ relative approximation error, and compute various quantiles: this is reported in the same figure. The running time on a desktop was few minutes.

\begin{figure}[ht!]
\begin{center}
\includegraphics[width=0.49\textwidth]{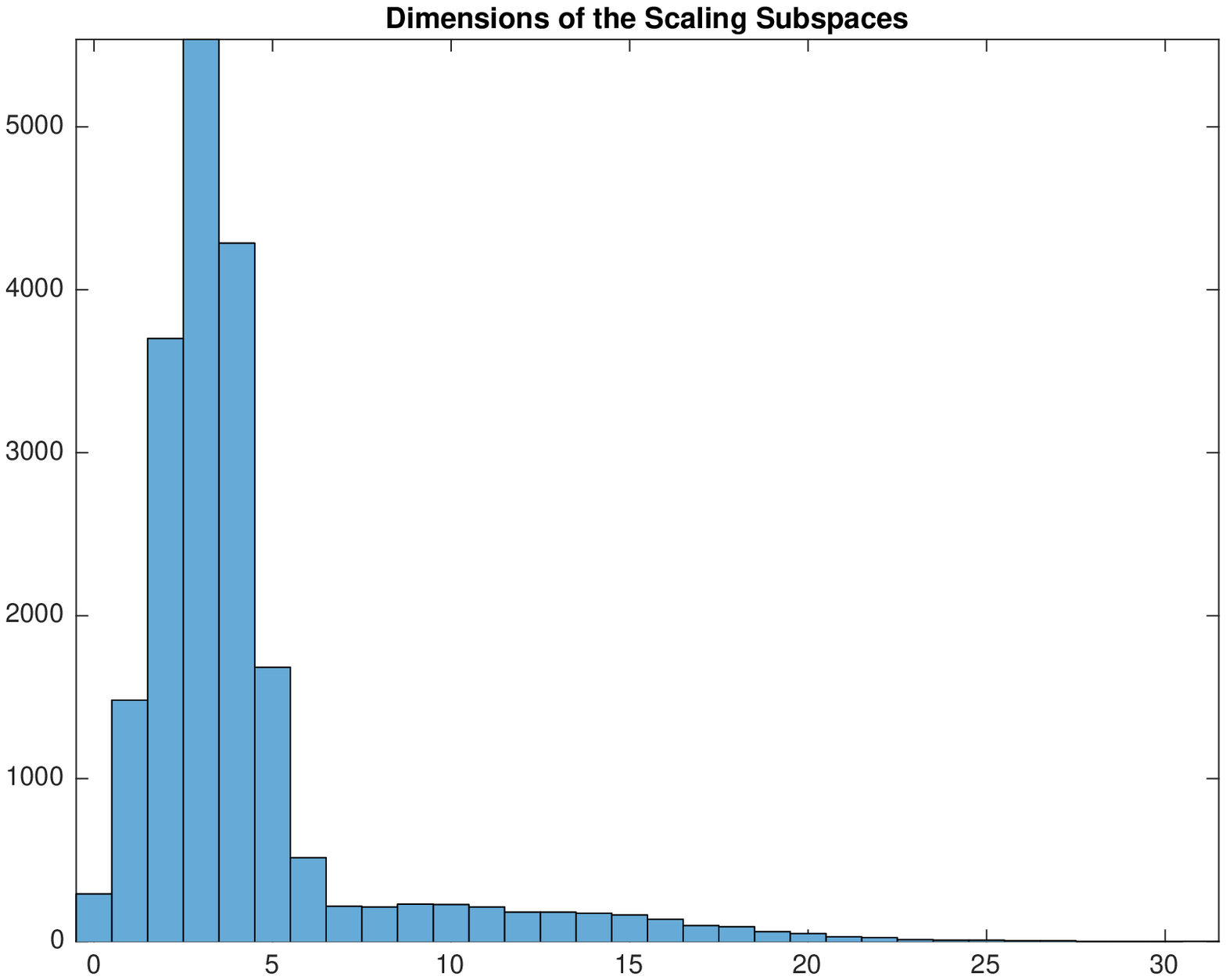}
\includegraphics[width=0.49\textwidth]{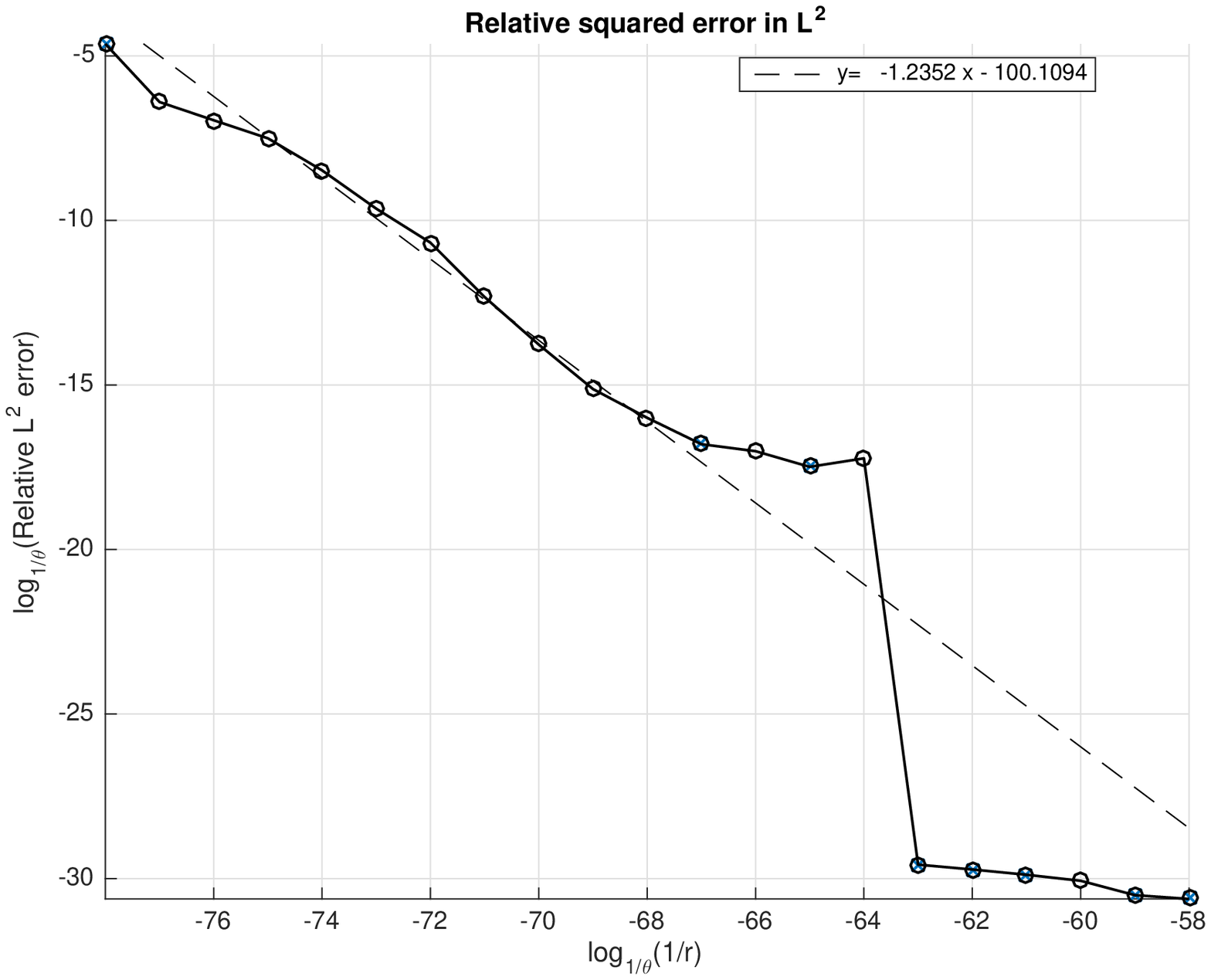}
\end{center}
\caption{Left: histogram of the dimension of the scaling function subspaces for the MNIST data set. We see that many of the subspaces are very low-dimensional, with dimensions mostly between $2$ and $5$. Right: $L^2$ relative approximation error squared as a function of scale. We do not plot the quantiles since many of them are many orders of magnitude smaller (which is a good thing in terms of approximation error), creating artifacts in the plots; they do indicate thought that the structure of the data is highly complex and not heterogeneous.  Note that the axis of this plot are in $\log_{1/\theta}$ scale, where $\theta=0.9$ is the cover tree scaling factor used in this example. Note how the approximation error decreases slowly at the beginning, as there are many classes, rather far from each other, so that it takes a few scales before GMRA starts focusing into each class, at which point the approximation error decreases more rapidly. This phenomenon does not happen uniformly over the data (figure not shown).}
\label{f:MNIST_histdim}
\end{figure}

\subsection{Sonata Kreutzer}
We consider a recording of the first movement of the Sonata Kreutzer by L.V. Beethoven, played by Y. Pearlman (violin) and V. Ashkenazy (piano) (EMI recordings). The recording is stereo, sampled at 44.1kHz. We map it to mono by simply summing the two audio channels, and then we generate a high-dimensional dataset as follows. We consider windows of width $w$ seconds, overlapping by $\delta w$ seconds, and consider the samples in each such time window $[i\delta w,i\delta w + w)$ as a high-dimensional vector $X'_i$, of dimension equal to the sampling rate times $w$. In our experiment we choose $w=0.1$ seconds, $\delta w=0.05$ seconds, and the resulting vectors $X'_i$ are $D'=551$-dimensional. Since Euclidean distances between the $X'_i$ are far from being perceptually relevant, we transform each $X'_i$ to its cepstrum (see \cite{OS:DSP}), remove the central low-pass frequency, and discard the symmetric part of the spectrum (the signal is real), obtaining $X_i$, a vector with $D=275$ dimensions, and $i$ ranges from $0$ to about $130,000$. The running time on a desktop was few minutes.

\begin{figure}[ht!]
\begin{center}
\includegraphics[width=0.49\textwidth]{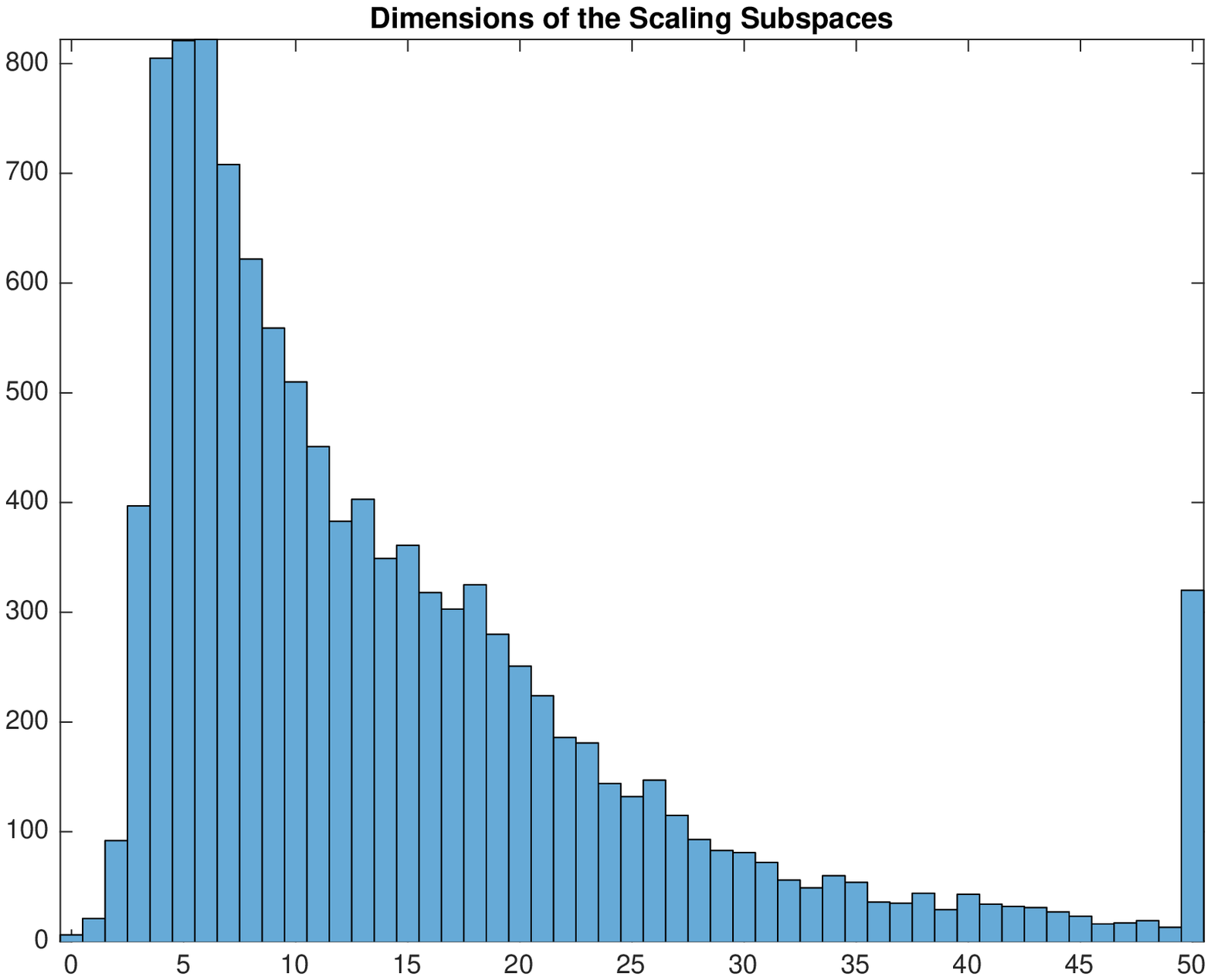}
\includegraphics[width=0.49\textwidth]{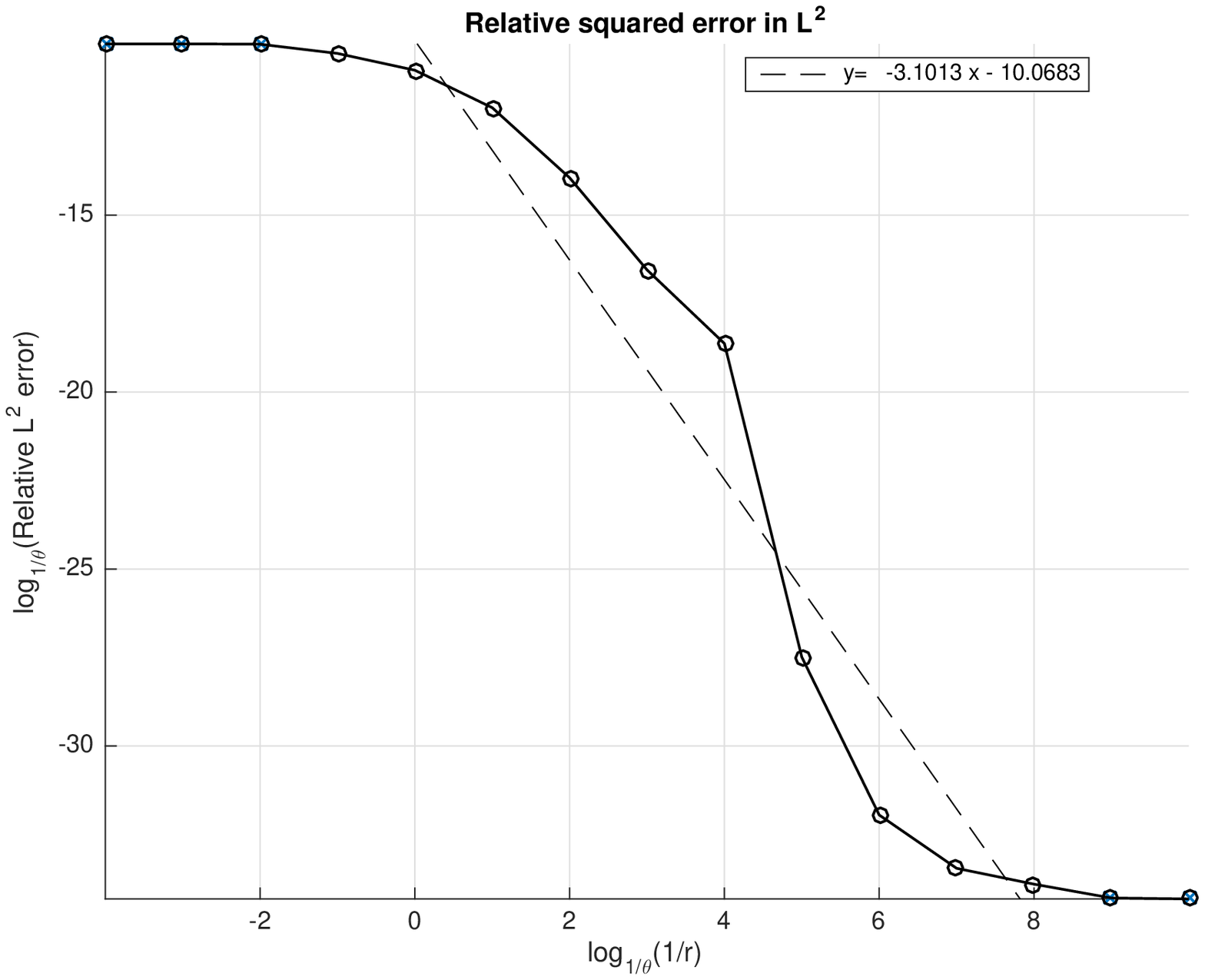}
\end{center}
\caption{Left: histogram of the dimension of the scaling function subspaces for the Kreutzer sonata dataset. We see that the dimension of the scaling function subspaces is mostly between $4$ and $25$. Right: mean  $L^2$ relative approximation error squared as a function of scale. We do not plot the quantiles since many of them are many orders of magnitude smaller (which is a good thing in terms of approximation error), creating artifacts in the plots; they do indicate that the structure of the data is highly complex and non-heterogeneous. Note that the axes of this plot are in $\log_{1/\theta}$ scale, where $\theta=0.9$ is the scaling factor used in this example.}
\label{f:Kreutzer_histdim}
\end{figure}

\acks{The authors gratefully acknowledge support from NSF DMS-0847388, NSF DMS-1045153, ATD-1222567, CCF-0808847, AFOSR FA9550-14-1-0033, DARPA N66001-11-1-4002. 
We would also like to thank Mark Iwen for his insightful comments.}

%

\appendix
\addcontentsline{toc}{section}{Appendix: Proofs of geometric propositions and lemmas}
\section*{Appendix: Proofs of geometric propositions and lemmas}
\label{pf:volume}
\begin{proof}[Proof of Proposition \ref{prop:volume}]
For the first inequality, let
$$
A=\begin{pmatrix}
I\\
X
\end{pmatrix}\text{ and } B=\begin{pmatrix}
Y\\
0
\end{pmatrix},
$$
and for every $T\subset[d]$, we let $V_T$ denote the volume of $\{a_i\}_{i\in T^c}\cup\{b_i\}_{i\in T}$, where $a_i$ and $b_i$ denote the $i$th columns of $A$ and $B$ respectively.
By submultilinearity of the volume we have
$$
\Vol(A+B)\leq \sum_{T\in 2^{[d]}} V_T,
$$
where $2^{[d]}=\{S: S\subset\{1,\ldots,d\}\}$. We now show that $V_T\leq q^{\vert T\vert}\Vol(A)$ for every $T\in 2^{[d]}$. The bound $\Vert Y\Vert\leq q$ implies $\Vert y_i\Vert\leq q$ for all $i=1,\ldots,d$, and so the fact that the volume is a submultiplicative function implies that
$$
V_T\leq q^{\vert T\vert} \Vol (A_{T^c}).
$$
On the other hand, letting $a_1^\perp$ be the orthogonal projection of $a_1$ onto $\text{span}^\perp\{a_i\}_{i=2}^d$, we note that $\Vert a_1^\perp\Vert\geq 1$, and thus
$$
\Vol(A_{\{1\}^c})\leq \Vert a_1^\perp\Vert\Vol(A_{\{1\}^c})=\Vol(A).
$$
By induction and invariance of the volume under permutations, we see that $\Vol (A_{T^c})\leq \Vol(A)$ for all $T\in 2^{[d]} $. Thus,
$$
\Vol(A+B)\leq \sum_{T\in 2^{[d]}} q^{\vert T\vert}\Vol(A)=(1+q)^d\Vol(A).
$$

For the second inequality, since $Y$ is symmetric, we can represent it as 
$Y=F-G$ where $F$ and $G$ are symmetric positive semidefinite, $FG=GF=0$, and $\Vert F\Vert,\Vert G\Vert\leq \Vert Y\Vert$. 
Indeed, if $Y=Q\Lambda Q^T$ is the eigenvalue decomposition of $Y$ with $\Lambda=\diag(\lambda)$, set 
$\lambda_+:=(\max(0,\lambda_1),\ldots,\max(0,\lambda_d))^T$, $\lambda_-:=\lambda_+ - \lambda$, and define 
$F:=Q\diag(\lambda_+)Q^T$, $G=Q\diag(\lambda_-)Q^T$.  

Recall the \emph{matrix determinant lemma}: let $T\in \mb R^{k\times k}$ be invertible, and let $U,V\in \mb R^{k\times l}$. 
Then
\[
\Vol(T+UV^T)=\Vol(I+V^T T^{-1}U)\Vol(T).
\]
Applying it in our case with $U=\begin{pmatrix} \sqrt{F}-\sqrt{G}\\
												0
												\end{pmatrix}$, 
$V=\begin{pmatrix} \sqrt{F}+\sqrt{G}\\
							0
							\end{pmatrix}$, and 
$T=\begin{pmatrix} I & X^T \\
				X & -I
							\end{pmatrix}$, 
we have that
$$
\Vol \begin{pmatrix}
			I+Y & X^T\\
			X & -I\\
			\end{pmatrix} = \Vol \left( I + \begin{pmatrix}
												\sqrt{F}+\sqrt{G}\\
												0
												\end{pmatrix}^T \begin{pmatrix}
			I & X^T\\
			X & -I\\
			\end{pmatrix}^{-1}\begin{pmatrix}
												\sqrt{F}-\sqrt{G}\\
												0
												\end{pmatrix}\right) \Vol \begin{pmatrix}
			I & X^T\\
			X & -I\\
			\end{pmatrix}.	
$$
By orthogonality of the columns in
$
\begin{pmatrix}
		I\\
		X
		\end{pmatrix}
$
with the columns in 
$
\begin{pmatrix}
	X^T\\
	-I
	\end{pmatrix},
$
we have that
$$
\left\Vert \begin{pmatrix}
			I & X^T\\
			X & -I\\
			\end{pmatrix} \begin{pmatrix}
								u\\
								v
								\end{pmatrix}\right\Vert \geq \left\Vert  \begin{pmatrix}
								u\\
								v
								\end{pmatrix}\right\Vert,
$$
and hence
$$
\left\Vert \begin{pmatrix}
												\sqrt{F}+\sqrt{G}\\
												0
												\end{pmatrix}^T\begin{pmatrix}
			I & X^T\\
			X & -I\\
			\end{pmatrix}^{-1}\begin{pmatrix}
												\sqrt{F}-\sqrt{G}\\
												0
												\end{pmatrix}\right\Vert\leq \sqrt{q}\cdot 1\cdot\sqrt{q}=q.
$$
Therefore, we conclude that
$$
\Vol \left( I + \begin{pmatrix}
												\sqrt{F}+\sqrt{G}\\
												0
												\end{pmatrix}^T \begin{pmatrix}
			I & X^T\\
			X & -I\\
			\end{pmatrix}^{-1}\begin{pmatrix}
												\sqrt{F}-\sqrt{G}\\
												0
												\end{pmatrix}\right)\geq (1-q)^d,
$$
and combining this with the expression from the matrix determinant lemma completes the proof.									
\end{proof}

\begin{proof}[Proof of Lemma \ref{lem:lengths}]
Let $\gamma:[0,d_{\m M}(z,y)]\rightarrow\m M$ denote the arclength-parameterized geodesic connecting $y$ to $z$ in $\m M$. 
Since $\gamma$ is a geodesic, there is a $v\in T_y \m M$ with $\Vert v\Vert=1$ such that the Taylor expansion
$$
z = y + d_{\m M}(z,y) v + \int_0^{d_{\m M}(z,y)} \gamma^{\prime\prime}(t)\l(d_{\m M}(z,y)-t\r)dt.
$$
By Proposition \ref{prop:reach}, $\Vert \gamma^{\prime\prime}(t)\Vert_2\leq 1/\tau$ for all $t$ and $ d_{\m M}(z,y)\leq 2 r$, so we have that
\begin{align*}
\vert \langle \eta, z-y\rangle\vert&= \l\vert\l\langle \eta, \int_0^{d_{\m M}(z,y)} \gamma^{\prime\prime}(t)\l(d_{\m M}(z,y)-t\r)dt\r\rangle\r\vert\\
&\leq \int_0^{d_{\m M}(z,y)} \vert\langle \eta,\gamma^{\prime\prime}(t)\rangle\vert \l(d_{\m M}(z,y)-t\r)dt\\
&\leq \frac{1}{\tau}\int_0^{d_{\m M}(z,y)} \l(d_{\m M}(z,y)-t\r)dt\\
&\leq \frac{d_{\m M}(z,y)^2}{2\tau}\\
&\leq \frac{2r^2}{\tau}.
\end{align*}
\end{proof}

\begin{proof}[Proof of Lemma \ref{lem:inject}]
Suppose $a$ and $b$ are distinct in $B(y,r)\cap {\m M}$. 
Now, $b-a=v+w$ where $v\in T_a\m M$ and $w\in T_a^\perp\m M$, and note that $\Vert w\Vert\leq \frac{2\Vert b-a\Vert^2}{\tau}\leq 4\frac{r}{\tau}$ by Lemma \ref{lem:lengths}. This also implies that
\begin{align*}
\Vert v\Vert &= \sqrt{\Vert a-b\Vert^2-\Vert w\Vert^2}
			\geq \sqrt{\Vert a-b\Vert^2 - 4\frac{\Vert a-b\Vert^4}{\tau^2}}
			\geq \Vert a-b\Vert \sqrt{1- 16 \frac{r^2}{\tau^2}}
			\geq \Vert a-b\Vert\sqrt{1-4\frac{r}{\tau}}\,.
\end{align*}
By part {\it iii.} of Proposition \ref{prop:reach}, there is a $u\in T_y\m M$ such that $\langle u, v\rangle\geq\Vert v\Vert\cos(\phi)$ where $\phi$ is the angle between $T_y\m M$ and $T_a\m M$. Then
\begin{align*}
\vert\langle u, b-a\rangle\vert&\geq \vert\langle u,v\rangle\vert-\vert\langle u, w\rangle\vert\\
&\geq \Vert v\Vert\cos(\phi) - \Vert w\Vert\\
&\geq \Vert a-b\Vert \sqrt{1-4\frac{r}{\tau}}\sqrt{1-2\frac{r}{\tau}} - 2\frac{\Vert a-b\Vert^2}{\tau}\\
&\geq \Vert b-a\Vert\l(\sqrt{1-4\frac{r}{\tau}}\sqrt{1-4\frac{r}{\tau}}-4\frac{r}{\tau}\r)\\
&\geq \Vert b-a\Vert\left(1-8\frac{r}{\tau}\right).
\end{align*}
It then follows from $r<\tau/8$ that $\proj{}_{T_y\m M}(b-a)\not = 0$, and hence $\proj{}_{y+T_y\m M}(a)\not = \proj{}_{y+T_y\m M}(b)$ and injectivity holds.
\end{proof}

\begin{proof}[Proof of Proposition \ref{prop:diffBounds}]
For $\varepsilon<\tau/8$, we may define the embedding
$$
\begin{pmatrix}
		v\\
		\beta
		\end{pmatrix}\longmapsto \begin{pmatrix}
											v\\
											f(v)
											\end{pmatrix} + \begin{pmatrix}
																Df(v)^T\\
																-I\\
																\end{pmatrix}\beta
$$
where we have assumed (without loss of generality) that $y=0$ and $T_y\m M$ coincides with the span of the first $d$ canonical orthonormal basis members. The domain of this map is the set 
$$
\Omega=\{(v,\beta)\in \mathbb{R}^d\times\mathbb{R}^{D-d}:v\in T_y\m M\cap B(0,\varepsilon),\Vert \beta\Vert^2+\Vert Df(v)^T\beta\Vert^2<\tau^2\}
$$
and the Jacobian of this map is
$$
\begin{pmatrix}
I +\sum_{i=1}^{D-d}\beta_i D^2f_i(v) & Df(v )^T\\
Df(v) & -I
\end{pmatrix}.
$$
It is clear that the inverse of the above map is given by
$$
x\longmapsto (\proj{}_{y+T_y\m M} (\proj{}_{\m M}(x)), \proj{}_{T_y^\perp\m M}(x-\proj{}_{\m M}(x))),
$$
which is at least a $C^1$ map. Thus, a necessary condition for the $\tau$-radius normal bundle to embed is that the Jacobian exhibited above is invertible, which in turn implies that
$$
\begin{pmatrix}
I +\sum_{i=1}^{D-d}\beta_i D^2f_i(v) & Df(v)^T\\
Df(v) & -I
\end{pmatrix}\begin{pmatrix}
		\zeta\\
		Df(v)\zeta
		\end{pmatrix}\not = 0
$$
for all $\zeta\not=0$ when $(v,\beta)\in\Omega$. This reduces to $(I+\sum\beta_i D^2f_i(v)+Df(v)^TDf(v))\zeta\not = 0$, and so a necessary condition for embedding is then that the norm of $\sum_{i=1}^{D-d}\beta_i D^2f_i(v)$ does not exceed $1+\Vert Df(v)\Vert^2$ whenever
$$
\l\Vert\begin{pmatrix}
			Df(v)^T\\
			-I
			\end{pmatrix}\beta\r\Vert^2=\Vert \beta\Vert^2+\Vert Df(v)^T\beta\Vert^2 < \tau^2.
$$
In particular, this must be true if $\Vert\beta\Vert < \tau/\sqrt{1+\Vert Df(v)\Vert^2}$. This reduces to the condition that the operator norm
\begin{align}
\label{eq:v10}
\sup_{u\in \m S^{D-d-1}}\left\Vert \sum_{i=1}^{D-d}u_i D^2 f_i(v)\right\Vert < \frac{(1+\Vert Df(v)\Vert^2)^{3/2}}{\tau} < \frac{1}{\tau}\left( 1+\Vert Df(v)\Vert\right)^3.
\end{align}
By the fundamental theorem of calculus, we have that
$$
Df(v)x = Df(0)x +\int_0^{\Vert v\Vert} [ u_v^T D^2f_i(t u_v)x]dt = \int_0^{\Vert v\Vert} [u_v^T D^2f_i(t u_v) x] dt,
$$ 
where $u_v=v/\Vert v\Vert$ and $[ u_v^T D^2f_i(t u_v)x]$ indicates a vector with $i$th component $u_v^T D^2f_i(t u_v)x$. 
Consequently, for any $x\in\mathbb{R}^{d}$, we have that
\begin{equation}
\begin{aligned}
\Vert D f(v)x\Vert &\leq \int_0^{\Vert v\Vert} \left\Vert [u_v^T D^2f_i(t u_v) x]\right\Vert dt
\leq \Vert v\Vert \sup_{t\in [0,\Vert v\Vert]}\left\Vert[u_v^T D^2f_i(t u_v) x]\right\Vert\\
&
\leq \varepsilon\sup_{t\in [0,\varepsilon]}\left\Vert [u_v^T D^2f_i(t u_v) x]\right\Vert.
\end{aligned}
\label{eq:v20}
\end{equation}
Now, 
\begin{align*}
\left\Vert [u_v^T D^2f_i(t u_v) x]\right\Vert &= \sup_{u\in \m S^{D-d-1}} \langle u, [u_v^T D^2f_i(t u_v) x]\rangle= \sup_{u\in \m S^{D-d-1}} \sum_{i=1}^{D-d} u_i(u_v^T D^2f_i(t u_v) x)\\
										   &=  \sup_{u\in \m S^{D-d-1}} u_v^T \l(\sum_{i=1}^{D-d} u_iD^2f_i(t u_v)\r) x\\
										   &\leq  \sup_{u\in \m S^{D-d-1}} \Vert u_v\Vert  \l\Vert\sum_{i=1}^{D-d} u_iD^2f_i(t u_v)\r\Vert \Vert x\Vert\\
										   &= \Vert x\Vert \sup_{u\in \m S^{D-d-1}} \l\Vert\sum_{i=1}^{D-d} u_iD^2f_i(t u_v)\r\Vert,
\end{align*}
which together with (\ref{eq:v20}) and (\ref{eq:v10}) yields the bound
$$
\Vert Df(v)\Vert < \frac{\eps}{\tau}\l(1+\sup_{t\in[0,\eps]}\Vert Df(t u_v)\Vert\r)^3.
$$
Since this inequality also holds for any $v^\prime$ with $\Vert v\Vert\leq \eps^\prime$, taking a supremum yields
\begin{align*}
\sup_{\eps^\prime \in[0,\eps]}\Vert Df(t u_v)\Vert &\leq \sup_{\eps^\prime\in[0,\eps]}\frac{\eps^\prime}{\tau}\l(1+\sup_{t\in[0,\eps^\prime ]}\Vert Df(t u_v)\Vert\r)^3
\leq \frac{\eps }{\tau}\l(1+\sup_{\eps^\prime \in[0,\eps ]}\Vert Df(t u_v)\Vert\r)^3,
\end{align*}
and hence
$$
\sup_{v\in B_d(0,\eps)}\Vert Df(v)\Vert \leq \frac{\eps }{\tau}\l(1+\sup_{v\in B_d(0,\eps)}\Vert Df(v)\Vert\r)^3.
$$
Setting $a(\eps^\prime)=\sup_{v\in B_d(0,\eps^\prime )}\Vert Df(v)\Vert$, we have that $a(0)=0$,
$$
a(\eps^\prime)\leq \frac{\eps^\prime}{\tau}\l(1+a(\eps^\prime)\r)^3,
$$
for all $\eps^\prime \geq 0$, and $a$ is continuous by continuity of $\Vert D f(v)\Vert$. 
Setting $b(\eps^\prime)= a(\eps^\prime)/(1+a(\eps^\prime))$, we get
$$
b(\eps^\prime)(1-b(\eps^\prime))^2\leq \frac{\eps^\prime}{\tau}.
$$
Examining the polynomial $x(1-x)^2$, we see that the sublevel set $x(1-x)^2\leq \omega$ consists of two components when $\omega<4/27$. Also note that if $\omega<1/8$, then
$$
2(1-2\omega)^2= 2-8\omega+8\omega^2 > 2-1 = 1,
$$
and hence
$$
2\omega(1-2\omega)^2> \omega.
$$
Consequently, if $x$ is such that $x(1-x)^2\leq \omega$ and is in the interval containing zero in the sublevel set $x(1-x)^2\leq \omega<1/8$, then $x\leq 2\omega$.

By these observations, continuity of $b(\eps^\prime)$, and the fact that $b(0)=0$, 
we have that $a(\eps^\prime)\leq \frac{2\frac{\eps^\prime}{\tau}}{1-2\frac{\eps^\prime}{\tau}}$, and thus
\begin{align*}
\sup_{v\in B_d(0,\eps)}\Vert Df(v)\Vert \leq \frac{2\eps}{\tau-2\eps}.
\end{align*}
From the bound in (\ref{eq:v10}) we now acquire the bound
\begin{align*}
\sup_{v\in B_d(0,\eps)}\sup_{u\in\m S^{D-d-1}} \l\Vert \sum_{i=1}^{D-d-1} u_i D^2f_i(v)\r\Vert \leq \frac{\tau^2}{(\tau-2\eps)^3}.
\end{align*}
\end{proof}

\begin{proof}[Proof of Lemma \ref{lem:volUB}]
We first prove part {\it i.} Let $\eps>0$ satisfy $\eps<\tau/8$. Because of (\ref{eq:hessSUP}) and the fact that $\Vert\beta\Vert\leq\sigma$, we have that 
$$
\left\Vert \sum_{i=1}^{D-d}\beta_i D^2 f_i(v)\right\Vert\leq \frac{\sigma\tau^2}{(\tau - 2\varepsilon)^3}.
$$
Since this is also a bound for the columns of $\sum \beta_i D^2f_i(v)$, Proposition \ref{prop:volume} implies that
\begin{align*}
\text{Vol}\begin{pmatrix}
			I+\sum\beta_i D^2f_i(v) & Df(v)^T\\
			Df(v) & - I
			\end{pmatrix} &\leq \left(1+\frac{\sigma\tau^2}{(\tau-2\varepsilon)^3}\right)^d\Vol\begin{pmatrix}
			I & Df(v)^T\\
			Df(v) &-I
			\end{pmatrix}
\end{align*}
in $T^\perp(\m M\cap B(y,\varepsilon))\cap \m M_\sigma$.

On the other hand, we have that

$$
\Vol\begin{pmatrix}
			Df^T(v)\\
			-I
			\end{pmatrix} \leq \prod_{i=1}^{D-d}\sqrt{1+\Vert\nabla f_i(v)\Vert^2}\leq \left(1+\frac{4\varepsilon^2}{(\tau-2\varepsilon)^2}\right)^{(D-d)/2}
$$
since (\ref{eq:jacoSUP}) implies the bounds $\Vert\frac{\partial f(v)}{\partial v_i}\Vert\leq \frac{2\eps}{\tau-2\eps}$ for each $i=1,\ldots, d$, and the above is the largest this quantity may be subject to these bounds.

When these estimates are joined together, we have an inequality
\begin{align*}
\Vol\begin{pmatrix}
I +\sum_{i=1}^{D-d}\beta_i D^2f_i(v) & Df(v)^T\\
Df(v) & -I
\end{pmatrix}&\leq \left(1+\frac{\sigma\tau^2}{(\tau-2\varepsilon)^3}\right)^d\text{Vol}\begin{pmatrix}
			I & Df(v)^T\\
			Df(v) &-I
			\end{pmatrix}\\
			&\leq \left(1+\frac{\sigma\tau^2}{(\tau-2\varepsilon)^3}\right)^d\left(1+\frac{4\varepsilon^2}{(\tau-2\varepsilon)^2}\right)^{(D-d)/2}\Vol\begin{pmatrix}
			I\\
			Df(v)
			\end{pmatrix}.
\end{align*}
For an arbitrarily small $\varepsilon>0$, let $\{U_\gamma\}_{\gamma\in\Gamma}$ denote a finite partition of $U$ into measurable sets such that there for each $\gamma\in\Gamma$, there is a $y_\gamma$ satisfying $U_\gamma\subset \m M\cap B(y_\gamma,\varepsilon)$. Let $f_\gamma$ denote the inverse of $P_\gamma=\proj{}_{y_\gamma+T_{y_\gamma}\m M}$ in $U_\gamma$, and set
$$
E_{\gamma,v}=\{\beta\in \mathbb{R}^{D-d}: \Vert\beta\Vert^2+\Vert Df_\gamma(v)\beta\Vert^2\leq\sigma^2\}
$$
for all $v\in P_\gamma(U_\gamma)$. Thus, 
\begin{align*}
\int_{P_\gamma^{-1}(U_\gamma)} d\Vol(x) &= \int_{P_\gamma(U_\gamma)}\int_{E_{\gamma,v}}\Vol\begin{pmatrix}
I +\sum_{i=1}^{D-d}\beta_i D^2f_i(v) & Df(x)^T\\
Df(v) & -I
\end{pmatrix}d\beta dv\\
&\leq \int_{P_\gamma(U_\gamma)}\int_{E_{\gamma,v}}\left(1+\frac{\sigma\tau^2}{(\tau-2\varepsilon)^3}\right)^d\left(1+\frac{4\varepsilon^2}{(\tau-2\varepsilon)^2}\right)^{(D-d)/2}\Vol\begin{pmatrix}
I \\
Df(v)
\end{pmatrix}d\beta dv\\
&\leq \left(1+\frac{\sigma\tau^2}{(\tau-2\varepsilon)^2}\right)^d\left(1+\frac{4\varepsilon^2}{(\tau-2\varepsilon)^2}\right)^{(D-d)/2}\vol(U_\gamma)\Vol(B_{D-d}(0,\sigma))
\end{align*}
since $E_{\gamma,v}\subset B_{D-d}(0,\sigma)$. 
Consequently, we have that
\begin{align*}
\Vol( P^{-1}(U))&=\sum_{\gamma\in\Gamma}\Vol( P_\gamma^{-1}(U_\gamma))\\
&\leq\sum_{\gamma\in\Gamma}\left(1+\frac{\sigma\tau^2}{(\tau-2\varepsilon)^3}\right)^d\left(1+\frac{4\varepsilon^2}{(\tau-2\varepsilon)^2}\right)^{(D-d)/2}\vol(U_\gamma)\Vol(B_{D-d}(0,\sigma))\\
&=\left(1+\frac{\sigma\tau^2}{(\tau-2\varepsilon)^3}\right)^d\left(1+\frac{4\varepsilon^2}{(\tau-2\varepsilon)^2}\right)^{(D-d)/2}\vol(U)\Vol(B_{D-d}(0,\sigma)).
\end{align*}
Since $\varepsilon>0$ was arbitrary, we obtain
$$
\Vol(P^{-1}(U))\cap \m M_\sigma)\leq\left(1+\frac{\sigma}{\tau}\right)^d\vol(U)\Vol(B_{D-d}(0,\sigma)).
$$
This completes the proof of upper bound in part {\it i.} Using a similar partition strategy, we have that
\begin{align*}
\int_{P_\gamma^{-1}(U_\gamma)} d\Vol(x) &= \int_{P_\gamma(U_\gamma)}\int_{E_{\gamma,v}}\Vol\begin{pmatrix}
I +\sum_{i=1}^{D-d}\beta_i D^2f_i(v) & Df(x)^T\\
Df(v) & -I
\end{pmatrix}d\beta dv\\
&\geq \int_{P_\gamma(U_\gamma)}\int_{E_{\gamma,v}}\left(1-\frac{\sigma\tau^2}{(\tau-2\varepsilon)^3}\right)^d\Vol\begin{pmatrix}
I &Df(v)^T\\
Df(v) &- I
\end{pmatrix}d\beta dv\\
&= \int_{P_\gamma(U_\gamma)}\int_{E_{\gamma,v}}\left(1-\frac{\sigma\tau^2}{(\tau-2\varepsilon)^3}\right)^d\Vol\begin{pmatrix}
I \\
Df(v)
\end{pmatrix}\Vol\begin{pmatrix}
Df(v)^T\\
- I
\end{pmatrix}d\beta dv\\
&\geq \int_{P_\gamma(U_\gamma)}\int_{E_{\gamma,v}}\left(1-\frac{\sigma\tau^2}{(\tau-2\varepsilon)^3}\right)^d\Vol\begin{pmatrix}
I \\
Df(v)
\end{pmatrix}d\beta dv\\
&\geq \int_{P_\gamma(U_\gamma)}\int_{B_{D-d}\l(0,\frac{\sigma}{1+\frac{\eps}{\tau-\eps}}\r)}\left(1-\frac{\sigma\tau^2}{(\tau-2\varepsilon)^3}\right)^d\Vol\begin{pmatrix}
I \\
Df(v)
\end{pmatrix}d\beta dv\\
&= \left(1-\frac{\sigma\tau^2}{(\tau-2\varepsilon)^3}\right)^d\vol(U_\gamma)\Vol\left(B_{D-d}\left(0,\left(1-\frac{\eps}{\tau}\right)\sigma\right)\right)
\end{align*}
In the inequalities above, we have used the fact that there is a ball of radius $\left(1-\frac{\eps}{\tau}\right)\sigma$ inside of $E_{\gamma, v}$ for each $\gamma$ and each $v$. 
Aggregating all of the sums and letting $\eps\to 0$ yields the lower bound in part {\it i.}

We now prove part {\it ii.} Note that
$$
\Vol(\m M_\sigma\cap B(y,r))\leq \Vol( P^{-1}(\m M\cap B(y,r+\sigma)))
$$
since $\Vert \proj{}_{\m M}(x)-y\Vert\leq \Vert x-y\Vert + \Vert \proj{}_{\m M}(x)-x\Vert\leq r+\sigma$. Part {\it ii.} now follows from part {\it i.} and the fact that
\begin{align*}
\vol(\m M\cap B(y,r+\sigma))&\leq \int\limits_{P(\m M\cap B(y,r+\sigma))} \Vol \begin{pmatrix}
																					I\\
																					Df(v)
																			\end{pmatrix}dv\\
							&\leq \left(1+\left(\frac{2(r+\sigma)}{\tau-2(r+\sigma)}\right)^2\right)^{d/2}\Vol(B_d(0,r+\sigma)).																		
\end{align*}

\end{proof}

\begin{proof}[Proof of Lemma \ref{lem:evalUB}]
By the variational characterization of eigenvalues, we have that
\begin{align*}
\sum_{i=d+1}^D\lambda_i(\Sigma)&=\argmin_{\dim(V)=D-d} \tr(\proj{}_V^T\Sigma \proj{}_V)\\
&=\argmin_{\dim(V)=D-d} \mathbb{E}\Vert \proj{}_V(Z-\mathbb{E}Z)\Vert^2\\
&=\argmin_{\dim(V)=d}\mathbb{E}\Vert Z-\mathbb{E}Z-\proj{}_V(Z-\mathbb{E}Z)\Vert^2.
\end{align*}
Thus, we have that 
$\sum\limits_{i=d+1}^D\lambda_i(\Sigma)\leq \mathbb{E}\Vert Z-\mathbb{E}Z-\proj{}_{T_y\m M}(Z-\mathbb{E}Z)\Vert^2$. 
Observe that
\begin{align*}
\nonumber
\mathbb{E}\Vert Z-\mathbb{E}Z-\proj{}_{T_y\m M}(Z-\mathbb{E}Z)\Vert^2=&\mathbb{E}\Vert Z-y+(y-\mathbb{E} Z) - \proj{}_{T_y\m M}((Z-y)+(y-\mathbb{E}Z))\Vert^2\\
\nonumber
=&\mathbb{E}\Vert Z-y-\proj{}_{T_y\m M}(Z-y)\Vert^2\\
& - \Vert (y-\mathbb{E} Z) - \proj{}_{T_y\m M}(y-\mathbb{E}Z)\Vert^2 \\
\nonumber
&\leq\mathbb{E}\Vert Z-y-\proj{}_{T_y\m M}(Z-y)\Vert^2.
\end{align*}
Now for any $z\in \m M_\sigma\cap B(y,r)$, we have that $z=\beta+x$ where $x\in \m M$, and $\beta\in T_x^\perp\m M$ satisfies $\Vert\beta\Vert\leq\sigma$. 
Moreover, there is a unique decomposition $x=\eta+v+y$ where $\eta\in T_ y^\perp\m M$ and $v\in T_y\m M$.  Thus,
\begin{align}
\label{eq:f01}
\Vert z-y-\proj{}_{T_y\m M}(z-y)\Vert=\Vert \beta+\eta -\proj{}_{T_y\m M}\beta\Vert\leq \Vert\beta-\proj{}_{T_y\m M}(\beta)\Vert+\Vert\eta\Vert\leq \sigma+\frac{2r^2}{\tau}, 
\end{align}
by Lemma \ref{lem:lengths}, and we obtain the bound
\begin{align}
\label{eq:f02}
\mathbb{E}\Vert Z-\mathbb{E}Z- \proj{}_{T_y\m M}(Z-\mathbb{E}Z)\Vert^2\leq 2\sigma^2+\frac{8r^4}{\tau^2}.
\end{align}
This establishes the required estimate.
\end{proof}

\begin{proof}[Proof of Lemma \ref{lem:evalLB}]
For any unit vector $u\in T_y\m M$ we have
\begin{align*}
\mathbb{E}\langle u,Z-\mathbb{E}Z\rangle^2&= \frac{1}{\Vol(Q\cap \m M_\sigma)}\int_{Q\cap \m M_\sigma}\langle u, Z-\mathbb{E}Z\rangle^2 d\Vol(Z)\\
&\geq \frac{1}{\Vol(B(y,r_2)\cap \m M_\sigma)}\int_{B(y,r_1)\cap \m M_\sigma}\langle u, (Z-y)-\mathbb{E}(Z-y)\rangle^2 d\Vol(Z)
\end{align*}
using the inclusion assumptions, and by adding and subtracting the constant vector $y$. 

We now seek to reduce the domain of integration and perform a change of variables. Since $r_1\leq\tau/8$, the inverse of the affine projection onto $y+T_y\m M$ is injective. Without loss of generality, we assume $y=0$ and $T_y\m M$ is the span of the first $d$ standard orthonormal vectors. Letting $f$ denote the inverse of the affine projection onto $y+T_y\m M$, we see that the map 
\[
\begin{pmatrix}
	v\\
	\beta
	\end{pmatrix}\longmapsto \begin{pmatrix}
									v\\
									f(v)+\beta
									\end{pmatrix}
\]
is well-defined and injective on $\proj{}_{T_y\m M}(\m M\cap B(y,r_1-\sigma))\times (T_y^\perp \m M\cap B(0,\sigma))$. Let $g$ denote this map, note that
\[
\Vert x +\beta\Vert\leq \Vert x\Vert+\Vert\beta\Vert\leq(r-\sigma)+\sigma=r,
\]
for $x\in \m M \cap B(y,r_1-\sigma)$, and hence the image of $g$ is contained in $\m M_\sigma\cap B(y,r_1)$. Since the absolute value of the determinant of the Jacobian of $g$ is always $1$ (it is lower triangular with ones on the diagonal), employing the change of coordinates in the reduced domain of integration yields
\[
\mathbb{E}\langle u,Z-\mathbb{E}Z\rangle^2\geq \frac{1}{\Vol(B(y,r_2)\cap\m M_\sigma)}\int_{\m A} \int_{\m B} \l\langle \begin{pmatrix}
																																u\\
																																0
																																\end{pmatrix}, \begin{pmatrix}
																																					v\\
																																					f(v)+\beta
																																					\end{pmatrix}-\mathbb{E}(Z-y)\r\rangle^2d\beta dv,
\]
where 
\[
\m A=\proj{}_{T_y\m M}(B(y,r_1-\sigma)\cap\m M),\: \m B=T_y^\perp \m M\cap B(0,\sigma) .
\]
Note that $B(y,\cos(\theta)(r_1-\sigma))\cap(y+ T_y\m M)\subset \m A$. Setting ${\m Q}=\proj{}_{T_y\m M}$, this immediately reduces to
\begin{align*}
\mathbb{E}\langle u,Z-\mathbb{E}Z\rangle^2&\geq\frac{1}{\Vol(B(y,r_2)\cap\m M_\sigma)}\int_{\m A} \int_{\m B} \langle u, v-\mathbb{E}{\m Q}(Z-y)\rangle^2d\beta dv\\
&=\frac{\Vol(B_{D-d}(0,\sigma))}{\Vol(B(y,r_2)\cap \m M_\sigma)}\int_{\m A} \langle u, v-\mathbb{E}{\m Q}(Z-y)\rangle^2 dv\\
&\geq\frac{\Vol(B_{D-d}(0,\sigma))}{\Vol(B(y,r_2)\cap \m M_\sigma)}\int_{B_d(0,q)} \langle u, v-\mathbb{E}{\m Q}(Z-y)\rangle^2 dv,
\end{align*}
where $q=\cos(\delta)(r_1-\sigma)$ and $\delta=\arcsin((r_1-\sigma)/2\tau)$. Noting that $\int_{B_d(0,q)}\langle u,v\rangle dv=0$ by symmetry, we now use linearity of the inner product to further reduce the integrand:
\begin{align*}
\mathbb{E}\langle u,Z-\mathbb{E}Z\rangle^2&\geq\frac{\Vol(B_{D-d}(0,\sigma))}{\Vol(B(y,r_2)\cap \m M_\sigma)}\int_{B_d(0,q)} \left(\langle u, v\rangle^2-2\langle u,v\rangle\langle u,\mathbb{E}{\m Q}(Z-y)\rangle+\langle u,\mathbb{E}{\m Q}(Z-y)\rangle^2\right) dv\\
&=\frac{\Vol(B_{D-d}(0,\sigma))}{\Vol(B(y,r_2)\cap \m M_\sigma)}\int_{B_d(0,q)} \left(\langle u, v\rangle^2+\langle u,\mathbb{E}{\m Q}(Z-y)\rangle^2\right) dv\\
&\geq \frac{\Vol(B_{D-d}(0,\sigma))}{\Vol(B(y,r_2)\cap \m M_\sigma)}\int_{B_d(0,q)} \langle u, v\rangle^2 dv\\
&=\frac{\Vol(B_{D-d}(0,\sigma))\Vol(B_d(0,q))}{\Vol(B(y,r_2)\cap \m M_\sigma)}\frac{q^2}{d}.
\end{align*}
By Lemma \ref{lem:volUB}, we then obtain
\begin{align}
\label{eq:eig_lb}
\nonumber
\mathbb{E}\langle u,Z-\mathbb{E}Z\rangle^2&\geq\left(\left(1+\frac{\sigma}{\tau}\right)\sqrt{1+\left(\frac{2(r_2+\sigma)}{\tau-2(r_2+\sigma)}\right)^2}\right)^{-d}\frac{\Vol(B_d(0,q))}{\Vol(B_d(0,r_2+\sigma))}\frac{q^2}{d}\\
&
\geq \frac{1}{4\left(1+\frac{\sigma}{\tau}\right)^d}\left(\frac{r_1-\sigma}{r_2+\sigma}\right)^d
\left(\frac{1-\left(\frac{r_1-\sigma}{2\tau}\right)^2}{1+\left(\frac{2(r_2+\sigma)}{\tau-2(r_2+\sigma)}\right)^2}\right)^{d/2}\frac{(r_1-\sigma)^2}{d}.
\end{align}
Let $V_{d-1}(\Sigma)$ be a subspace corresponding to the first $d-1$ principal components of $Z$: 
\[
V_{d-1}=\argmin_{\dim(V)=d-1}\mb E\|Z-\mb EZ-\proj{}_{V}(Z-\mb EZ)\|,
\]
and note that $\lambda_d(\Sigma)=\max_{0\ne u\in V_{d-1}^\perp}\mb E\dotp{\frac{u}{\|u\|}}{Z-\mb EZ}^2$. 
Since $\dim(V_{d-1}^\perp)=D-d+1$ and $\dim(T_y \m M)=d$, it is easy to see that $V_{d-1}^\perp\cap T_y\m M\ne \emptyset.$ For any $u_\ast\in V_{d-1}^\perp\cap T_y\m M$ such that  $\|u_\ast\|=1$ it follows from Courant-Fischer characterization of $\lambda_d(\Sigma)$ that
\[
\lambda_d(\Sigma)\geq \mb E\dotp{u_\ast}{Z-\mb EZ}^2,
\]  
and (\ref{eq:eig_lb}) implies the desired bound. 
\end{proof}

\begin{proof}[Proof of Lemma \ref{lem:sup-norm}]
Let $Q\subset \mb R^D$ be such that $B(y,r_1)\subset Q$ and $\m M_\sigma\cap Q\subset B(y,r_2)$ for some $y\in \m M$ and $\sigma<r_1<r_2<\tau/8-\sigma$. 
Assume that $Z$ is drawn from $U_{\m M_\sigma\cap Q}$, let $\Sigma$ be the covariance matrix of $Z$ and $V_d:=V_d(\Sigma)$ - the subspace corresponding to the first $d$ principal components of $Z$. 

Let $\alpha\in [0,1]$ be such that $\cos(\phi):=\min_{u\in V_d, \|u\|=1}\max_{v\in T_y \m M,\|v\|=1}\left| \dotp{u}{v}\right|=\sqrt{1-\alpha^2}$ is the cosine of the angle between $T_y \m M$ and $V_d$. 
Then there exists a unit vector $u_\ast\in (V_d)^\perp$ such that 
\[
\max_{v\in T_y \m M, \|v\|=1}|\dotp{u_\ast}{v}|\geq\alpha.
\] 
Indeed, let $u'\in V_d,\ v'\in T_y \m M$ be unit vectors such that $\cos(\phi)=\dotp{u'}{v'}$,
Note that $\sqrt{1-\alpha^2}$ is equal to the smallest absolute value among the nonzero singular values of the operator $\proj{}_{T_y \m M}\proj{}_{V_d}$. 
Since the spectra of the operators $\proj{}_{T_y \m M}\proj{}_{V_d}$ and $\proj{}_{V_d}\proj{}_{T_y \m M}$ coincide by well-known facts from linear algebra, we have that 
\[
\min_{u\in V_d, \|u\|=1}\max_{v\in T_y \m M,\|v\|=1}\left| \dotp{u}{v}\right|=
\min_{v\in T_y \m M,\|v\|=1}\max_{u\in V_d, \|u\|=1}\left| \dotp{u}{v}\right|.
\]
In other words, $\proj{}_{T_y \m M}(u')=\dotp{u'}{v'}v'$ and $\proj{}_{V_d}(v')=\dotp{u'}{v'}u'$. 
This implies that there exists a unit vector $u_\ast\in (V_d)^\perp$ such that $v'=\dotp{v'}{u'}u'+\dotp{v'}{u_\ast}u_\ast$, hence 
$\dotp{u_\ast}{v'}^2=1-\dotp{v'}{u'}^2=\alpha^2$, so $u_\ast$ satisfies the requirement.  

To simplify the expressions, let
$$
\zeta = \frac{1}{\Vol(Q\cap \m M_\sigma)}.
$$
We shall now construct upper and lower bounds for 
\begin{align*}
\zeta\int_{Q\cap \m M_\sigma} \dotp{u_\ast}{x-\mb EZ-\proj{}_{V_d}(x-\mb EZ)}^2 d\Vol(x)=\zeta\int_{Q\cap \m M_\sigma} \dotp{u_\ast}{x-\mb EZ}^2 d\Vol(x)
\end{align*}
which together yield an estimate for $\alpha$.
Write $u_\ast=u_\ast^{||}+u_\ast^\perp$, where $u_\ast^{||}\in T_y \m M$ and $u_\ast^\perp\in T_y^\perp \m M$. 
By our choice of $u_\ast$, we clearly have that $\|u_\ast^{||}\|=\max_{v\in T_y \m M, \|v\|=1}\dotp{u_\ast}{v}\geq \alpha$. 
Using the elementary inequality $(a+b)^2\geq \frac{a^2}{2}-b^2$, we further deduce that
\begin{align}
\label{eq:app1}
\zeta\int_{Q\cap \m M_\sigma} \dotp{u_\ast}{x-\mb EZ}^2 d\Vol(x)
&\geq \zeta\int_{Q\cap \m M_\sigma} \frac 1 2\dotp{u_\ast^{||}}{x-\mb EZ}^2 d\Vol(x)\\
&\nonumber \hspace{10pt}-\zeta \int_{Q\cap \m M_\sigma} \dotp{u_\ast^\perp}{x-\mb EZ}^2 d\Vol(x).
\end{align}
It follows from the proof of Lemma \ref{lem:evalLB} that 
\begin{align*}
\zeta\int_{Q\cap \m M_\sigma} \frac 1 2\dotp{u_\ast^{||}}{x-\mb EZ}^2 d\Vol(x)
& \geq 
\frac{\alpha^2}{8\left(1+\frac{\sigma}{\tau}\right)^d}\left(\frac{r_1-\sigma}{r_2+\sigma}\right)^d\left(\frac{1-\left(\frac{r_1-\sigma}{2\tau}\right)^2}{1+\left(\frac{2(r_2+\sigma)}{\tau-2(r_2+\sigma)}\right)^2}\right)^{d/2}\frac{(r_1-\sigma)^2}{d}.
\end{align*}
For the last term in (\ref{eq:app1}), Lemma \ref{lem:evalUB} (see equation (\ref{eq:f02})) gives
\begin{align*}
\zeta\int_{Q\cap \m M_\sigma} \dotp{u_\ast^\perp}{x-\mb EZ}^2 d\Vol(x)&\leq
\zeta\int_{Q\cap \m M_\sigma} \|x-\mb EZ-\proj{}_{T_y \m M}(x-\mb EZ)\|^2 d\Vol(x)\\
& \leq 2\sigma^2+\frac{8r_2^4}{\tau^2},
\end{align*}
hence (\ref{eq:app1}) yields
\begin{align}
\label{eq:app2}
\zeta\int_{Q\cap \m M_\sigma} \dotp{u_\ast}{x-\mb EZ}^2 d\Vol(x) \geq& \nonumber
\frac{\alpha^2}{8\left(1+\frac{\sigma}{\tau}\right)^d}\left(\frac{r_1-\sigma}{r_2+\sigma}\right)^d\left(\frac{1-\left(\frac{r_1-\sigma}{2\tau}\right)^2}{1+\left(\frac{2(r_2+\sigma)}{\tau-2(r_2+\sigma)}\right)^2}\right)^{d/2}\frac{(r_1-\sigma)^2}{d}\\
&-
2\sigma^2-\frac{8r_2^4}{\tau^2}.
\end{align}
On the other hand, invoking (\ref{eq:f02}) once again, we have
\begin{align*}
\zeta\int_{Q\cap \m M_\sigma} \dotp{u_\ast}{x-\mb EZ}^2 d\Vol(x)\leq 2\sigma^2+\frac{8r_2^4}{\tau^2}.
\end{align*}
Combined with (\ref{eq:app2}), this gives
\begin{align}
\label{eq:app3}
&
\frac{\alpha^2}{8\left(1+\frac{\sigma}{\tau}\right)^d}\left(\frac{r_1-\sigma}{r_2+\sigma}\right)^d\left(\frac{1-\left(\frac{r_1-\sigma}{2\tau}\right)^2}{1+\left(\frac{2(r_2+\sigma)}{\tau-2(r_2+\sigma)}\right)^2}\right)^{d/2}\frac{(r_1-\sigma)^2}{d}\leq 
4\sigma^2+\frac{16r_2^4}{\tau^2},
\end{align}
and the upper bound for $\alpha$ follows. 

Notice that for any $x\in Q\cap \m M_\sigma$,
\begin{align}
\label{eq:app4}
x-\mb EZ-\proj{}_{V_d}(x-\mb EZ)&=
x-y-\proj{}_{T_y \m M}(x-y)+\underbrace{y-\mb EZ-\proj{}_{T_y \m M}(y-\mb EZ)}_{\proj{}_{(T_y\m M)^\perp}(y-\mb EZ)}\\
& \nonumber \hspace{10pt} +
(\proj{}_{T_y \m M}-\proj{}_{V_d})(x-\mb EZ).
\end{align}
It follows from (\ref{eq:f01}) that 
\begin{align*}
&
\|x-y-\proj{}_{T_y \m M}(x-y)\|=\left\|\proj{}_{T_y^\perp \m M}(x-y)\right\|
\leq \sigma+\frac{2r_2^2}{\tau}.
\end{align*}
Next, 
\begin{align*}
\|\proj{}_{(T_y\m M)^\perp}(y-\mb EZ)\|&=
\frac{1}{\Vol(Q\cap \m M_\sigma)}\left\|\int_{Q\cap \m M_\sigma}\proj{}_{T_y^\perp \m M}(y-z)d\Vol(z)\right\|\\ 
&\leq
\frac{1}{\Vol(Q\cap \m M_\sigma)}\int_{Q\cap \m M_\sigma}\left\|\proj{}_{T_y^\perp \m M}(z-y)\right\| d\Vol(z) \\
&\leq
\sigma+\frac{2r_2^2}{\tau}.
\end{align*}
Finally, it is easy to see that 
\begin{align*}
\|(\proj{}_{T_y \m M}-\proj{}_{V_d})(x-\mb EZ)\|\leq & \|\proj{}_{T_y \m M}(x-\mb EZ)-\proj{}_{V_d}\proj{}_{T_y \m M}(x-\mb EZ)\|\\
&+
\|\proj{}_{T_y^\perp \m M}(x-y)\|+\|\proj{}_{T_y^\perp \m M}(\mb E Z- y)\|.
\end{align*}
Let $u_x:=\frac{\proj{}_{T_y \m M}(x-\mb EZ)}{\|\proj{}_{T_y \m M}(x-\mb EZ)\|}$ and note that for any $x\in Q\cap \m M_\sigma$, $\|\proj{}_{T_y \m M}(x-\mb EZ)\|\leq 2r_2$, hence 
\begin{align*}
\|\proj{}_{T_y \m M}(x-\mb EZ)-\proj{}_{V_d}\proj{}_{T_y \m M}(x-\mb EZ)\|^2&\leq (2r_2)^2\left(1-\|\proj{}_{V_d}u_x\|^2\right)\\
&\leq4r_2^2\l(1-\min_{u\in T_y \m M, \|u\|=1}\max_{v\in V_d, \|v\|=1}\dotp{u}{v}^2\r)\\
&=4r_2^2 \alpha^2.
\end{align*}
Combining the previous bounds with (\ref{eq:app3}) and (\ref{eq:app4}), we obtain the result. 
\end{proof}

\begin{proof}[Proof of Lemma \ref{lem:covering}]
Assume the event $\m E_{\eps/2,n}=\left\{\{Y_1,\ldots,Y_n\} \text { is an } \eps/2 \text{ - net in } \m M\right\}$ occurs.
By Proposition \ref{prop:net}, $\Pr(\m E_{\eps/2,n})\geq 1-e^{-t}$.    

Since the elements of $T_j$ are $2^{-j}$-separated, for any $1\leq k\leq N(j)$, $B(a_{j,k},2^{-j-1})\subseteq C_{j,k}$. 
Moreover, since $\sigma\leq 2^{-j-2}$ and $\|a_{j,k}-z_{j,k}\|\leq \sigma$, 
\[
B(z_{j,k},2^{-j-1}-2^{-j-2})\subseteq B(z_{j,k},2^{-j-1}-\sigma)\subseteq B(a_{j,k},2^{-j-1}),
\]  
hence the inclusion $B\left(z_{j,k}, 2^{-j-2}\right)\subseteq C_{j,k}$ follows. 

To show that $C_{j,k}\cap \m M_\sigma\subseteq B(a_{j,k},3\cdot 2^{-j-2}+2^{-j+1})$, pick an arbitrary $z\in\m M_\sigma$.  
Note that on the event $\m E_{\eps/2,n}$, there exists $y\in \{Y_1,\ldots,Y_n\}$ satisfying $\|z-y\|\leq \eps/2+\sigma$. 
Let $x(y)\in \m X_n$ be such that $y=\proj{}_\m M (x(y))$. 
By properties of the cover trees (see Remark \ref{rem:2}), there exists $x_\ast \in T_j$ such that $\|x(y)-x_\ast\|\leq 2^{-j+1}$. 
Then 
\begin{align*}
\|z-x_\ast\|\leq & \|z-y\|+\|y-x(y)\|+\|x(y)-x_\ast\|\leq \eps/2+2\sigma+2^{-j+1}\leq 3\cdot 2^{-j-2}+2^{-j+1}.
\end{align*} 
Since $z$ was arbitrary, the result follows. 
Finally, $B(a_{j,k},3\cdot 2^{-j-2}+2^{-j+1})\subset B(z_{j,k},3\cdot 2^{-j})$ holds since $\|a_{j,k}-z_{j,k}\|\leq 2^{-j-2}$. 
\end{proof}

\vskip 0.1in
\bibliography{MyPublications,BigBib}

\end{document}